\newtheorem{theorem}{Theorem}
\newtheorem{lemma}[theorem]{Lemma}
\newtheorem{proposition}[theorem]{Proposition}
\newtheorem{corollary}[theorem]{Corollary}
\newtheorem{claim}{Claim}
\newcommand{\GG}{\mathcal{G}}
\newcommand{\defeq}{:=}
\newcommand*\samethanks[1][\value{footnote}]{\footnotemark[#1]}
\title{Monotone edge flips to an orientation of maximum edge-connectivity \`a la Nash-Williams}
\author{Takehiro Ito\thanks{Graduate School of Information Sciences, Tohoku University, Japan} \and 
        Yuni Iwamasa\thanks{Graduate School of Informatics, Kyoto University, Japan} \and
        Naonori Kakimura\thanks{Faculty of Science and Technology, Keio University, Japan} \and
        Naoyuki Kamiyama\thanks{Institute of Mathematics for Industry, Kyushu University, Japan} \and
        Yusuke Kobayashi\thanks{Research Institute for Mathematical Sciences, Kyoto University, Japan} \and
        Shun-ichi Maezawa\thanks{Graduate School of Informatics and Engineering, The University of Electro-Communications, Japan} \and
        Yuta Nozaki\thanks{Graduate School of Advanced Science and Engineering, Hiroshima University, Japan} \and
        Yoshio Okamoto\samethanks[6] \and
        Kenta Ozeki\thanks{Faculty of Environment and Information Sciences, Yokohama National University, Japan}
        }
\date{\today}
\begin{document}

\maketitle

\begin{abstract}
    We initiate the study of $k$-edge-connected orientations of undirected graphs through edge flips for $k \geq 2$.
    We prove that in every orientation of an undirected $2k$-edge-connected graph, there exists a sequence of edges
    such that flipping their directions one by one does not decrease the edge-connectivity, and the final orientation is $k$-edge-connected.
    This yields an ``edge-flip based'' new proof of Nash-Williams' theorem: an undirected graph $G$ has a $k$-edge-connected orientation if and only if $G$ is $2k$-edge-connected.
    As another consequence of the theorem, we prove that the edge-flip graph of $k$-edge-connected orientations of an undirected graph $G$ is connected if $G$ is $(2k+2)$-edge-connected. 
    This has been known to be true only when $k=1$.
\end{abstract}

\section{Introduction}

An orientation of undirected graphs has been a subject of
thorough studies over several decades.
For an undirected graph $G=(V,E)$ with possible multiple edges, an \emph{orientation} of $G$ is a directed graph
$D=(V,A)$ obtained from $G$ by choosing a directed edge $(u,v) \in A$ or $(v,u) \in A$ for each undirected edge $\{u,v\} \in E$.

An old result by Robbins~\cite{robbins} states that an undirected graph $G$ has a strongly connected orientation if and only if $G$ is $2$-edge-connected.
Robbins' theorem was extended by Nash-Williams~\cite{nash-williams_1960} as an undirected graph $G$ has a $k$-edge-connected orientation if and only if $G$ is $2k$-edge-connected.

This paper is concerned with reorientation. 
A basic question asks to find a smallest set $F$ of edges in an orientation of a $2$-edge-connected graph such that flipping the directions of all edges in $F$ yields a strongly connected orientation.
By a theorem of Lucchesi and Younger~\cite{LucchesiYounger},
the problem can be solved in polynomial time.
A higher edge-connectedness version has also been studied, which asks to find a smallest set $F$ of edges in an orientation of a $2k$-edge-connected graph such that flipping the directions of all edges in $F$ yields a $k$-edge-connected orientation.
By submodular flow, the problem can be solved in polynomial time~\cite{FRANK198297}.
For a faster algorithm, see Iwata and Kobayashi~\cite{IwataK10}.

We now want to investigate the situation where flips are performed one by one sequentially, while the results in the literature studied flipping the edges of a set at once.
We also want each of the intermediate orientations in the process to maintain at least as high edge-connectivity as the previous orientations in the process.
This has practical importance since simultaneous edge flips can be difficult to implement or control in some real-world situations such as traffic management~\cite{HausknechtASFW11}, and the reduction of  edge-connectivity in intermediate orientations may cause the loss of network quality.

To make the discussion more precise, we define an \emph {edge flip} (or a \emph{flip} for short) of a directed edge $(u,v)$ as an operation that replaces $(u,v)$ by $(v,u)$, i.e., 
reverses the direction of $(u,v)$.
For directed graphs $D$ and $D'$,
we denote $D \rightarrow D'$
if $D'$ is obtained from $D$ by a single edge flip.

Our main theorem is the following.
Remind that the \emph{edge-connectivity} of a directed graph $D=(V,A)$ is the minimum integer $\lambda$ such that every non-empty subset $X\subsetneq V$ has at least $\lambda$ edges leaving $X$, and is denoted by $\lambda(D)$.

\begin{theorem}
\label{thm:main01}
Let $k$ be a non-negative integer. 
Let $G=(V, E)$ be an undirected $2k$-edge-connected graph and 
$D=(V,A)$ be an orientation of $G$.
Then, there exist orientations 
$D_1, D_2, \dots  , D_\ell$ of $G$ such that $\ell \le k|V|^3$, 
$D \rightarrow D_1 \rightarrow D_2 \rightarrow \dots \rightarrow D_\ell$, and 
$\lambda(D) \le \lambda(D_1) \le \lambda(D_2) \le \dots \le \lambda(D_\ell) = k$. 
Furthermore, such orientations $D_1, \dots , D_\ell$ can be found in polynomial time. 
\end{theorem}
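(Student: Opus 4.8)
The plan is to prove Theorem~\ref{thm:main01} by raising the edge-connectivity one unit at a time. Assume $k\ge 1$ (the case $k=0$ is immediate). It suffices to establish the following one-level statement: \emph{if $\lambda(D)<k$, then $D$ can be transformed, by at most $O(|V|^3)$ edge flips none of which decreases the edge-connectivity, into an orientation of edge-connectivity at least $\lambda(D)+1$.} Since $0\le\lambda(D)\le k$ throughout, iterating this at most $k$ times proves the theorem with $\ell\le k|V|^3$, and since every substep below is a max-flow computation or a reachability search in $D$, the same argument runs in polynomial time. So fix $D$ with $\lambda:=\lambda(D)<k$ and call a nonempty proper $X\subsetneq V$ \emph{tight} if $d^+_D(X)=\lambda$. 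The hypothesis enters exactly once: since $G$ is $2k$-edge-connected, every tight set $X$ satisfies $d^-_D(X)=d_G(X)-\lambda\ge 2k-\lambda\ge\lambda+2$, a strict in-surplus, and all progress will be extracted from this. By submodularity of $d^+_D$, the inclusionwise minimal tight sets are pairwise disjoint (so there are at most $|V|$ of them) and every tight set contains one of them; also, for a vertex $w$ lying in some tight set, the smallest tight set $T_w$ through $w$ (the intersection of all tight sets containing $w$) is well defined and itself tight.

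\textbf{Reversing a directed path.} The basic move is to reverse a directed path $P\colon v_0\to v_1\to\dots\to v_m$ of $D$. This changes $d^+(X)$ by $+1$ if $v_m\in X\not\ni v_0$, by $-1$ if $v_0\in X\not\ni v_m$, and by $0$ otherwise; and it can be carried out flip by flip, reversing $(v_0,v_1),(v_1,v_2),\dots$ in turn, so that after $i$ flips the orientation is $D$ with the prefix $v_0\to\dots\to v_i$ reversed. Hence, if every tight set containing $v_0$ also contains $v_m$, then no intermediate orientation has edge-connectivity below $\lambda$; and if at some point it exceeds $\lambda$ we stop with the level finished, so we may assume $P$ is completed with edge-connectivity still $\lambda$. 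Then, checking which sets have $d^+=\lambda$ afterwards, a tight set $X$ of the new orientation either has $d^+_D(X)=\lambda$ with $v_0\in X\Leftrightarrow v_m\in X$ (an ``old'' one) or has $d^+_D(X)=\lambda+1$ with $v_0\in X\not\ni v_m$ (a ``new'' one); in particular every old tight set containing $v_m$ but not $v_0$ is destroyed.

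\textbf{Choosing the path.} If some vertex $z$ lies in no tight set, take $v_0=z$: the admissibility condition is vacuous and any directed path out of $z$ works. Otherwise the minimal tight sets $S_1,\dots,S_p$ do not cover $V$, since summing $d^-_D(S_i)-d^+_D(S_i)\ge 2$ over the pairwise-disjoint $S_i$ yields $d^-_D(\bigcup_i S_i)-d^+_D(\bigcup_i S_i)$, which is $0$ if $\bigcup_i S_i=V$. Pick $z\notin\bigcup_i S_i$; then $z$ lies in $T_z$, which strictly contains some minimal tight set $S_i$ (as $z\notin S_i$), and we choose $v_m\in S_i$. Every tight set containing $z$ contains $T_z\supseteq S_i\ni v_m$, so the path will be admissible; and a directed path $z\rightsquigarrow v_m$ exists, because if $\lambda\ge 1$ the set of vertices reachable from $z$ has out-degree $0$ and hence must be all of $V$, while if $\lambda=0$ that reachable set is a tight set containing $z$, hence contains $T_z\supseteq S_i$. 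Reversing such a path is the round: it either raises the edge-connectivity (done) or destroys the minimal tight set $S_i$ while possibly creating new tight sets, each of which contains $z$ and misses $v_m$.

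\textbf{Progress, and the main difficulty.} To bound the number of flips, the plan is to track the potential $\mu(D)=$ the number of vertices lying in some tight set, which is at most $|V|$, and to show that a bounded number, $O(|V|)$, of the above reversals strictly decreases $\mu$ while $\lambda$ is unchanged; then a level costs $O(|V|^2)$ reversals, i.e.\ $O(|V|^3)$ flips, and there are at most $k$ levels, matching the claimed $\ell\le k|V|^3$. I expect the real work to lie exactly in this monotonicity of $\mu$: a single reversal may relocate tight sets rather than shrink their union — for instance $T_z$ survives and still covers $v_m$ — so one must iterate with a carefully ordered sequence of targets $v_m$ (peeling ``innermost'' minimal tight sets from the inside) and argue that after finitely many reversals some vertex is permanently freed from all tight sets. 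The hardest case is when tight sets cover almost all of $V$, and this is precisely where the in-surplus $d^-_D(X)\ge\lambda+2$ of every tight set has to be invoked a second time to guarantee that the peeling cannot continue forever without $\mu$ dropping.
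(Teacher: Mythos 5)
Your high-level plan (raise $\lambda$ one level at a time, reverse a directed path, track a potential) matches the shape of the paper's argument, but there are two genuine gaps, and they are exactly where the paper's technical work lies. First, your flip-by-flip safety claim is false as stated: after flipping the first $i$ edges, the intermediate orientation is $D$ with the prefix $v_0\to\dots\to v_i$ reversed, so $d^+(X)$ drops by one for every set $X$ with $v_0\in X$ and $v_i\notin X$ — regardless of whether $v_m\in X$. A tight set containing both $v_0$ and $v_m$ that the path temporarily leaves and re-enters therefore falls to $\lambda-1$ at an intermediate step, violating the monotonicity requirement. Endpoint conditions alone cannot prevent this; one must control how the path meets the tight-set families along its entire length. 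This is precisely what the paper's key lemma does: it flips from the sink end and constructs $P=Q_1Q_2$ so that $Q_2$ uses no edge leaving any out-tight set and $Q_1$ avoids all out-tight sets inside a carefully chosen minimal ``mixed'' set $R$, and even then proving that no intermediate cut drops requires a delicate case analysis using the minimality of $R$ and of $T$. Your proposal, which allows an arbitrary directed path from $z$ to $v_m$, has no mechanism for this.

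Second, your progress argument is not a proof: you yourself flag that the monotone decrease of $\mu(D)$ (the number of covered vertices) after $O(|V|)$ reversals is ``the real work,'' and nothing in the proposal establishes it — a reversal can create new tight sets that are inclusionwise minimal and re-cover vertices, so $\mu$ need not decrease at all under your choice of endpoints. The paper resolves exactly this with two ingredients you lack: the potential ${\sf val}(D)=\sum_{X\in\mathcal{F}_{\rm min}}(|V|-|X|)\le |V|^2$, and the notions of \emph{safe source} and \emph{safe sink}, whose defining second condition (any set of out-/in-degree $k+1$ through the chosen endpoint already contains a tight set avoiding it) guarantees that every newly tight set strictly contains $S$ or $T$ and hence is never minimal, so ${\sf val}$ strictly drops after every single path reversal; the existence of safe endpoints is itself a nontrivial counting argument using the surplus $\delta^-\ge k+2$. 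A smaller but related flaw: your uncrossing claims (minimal tight sets are pairwise disjoint, the minimal tight set $T_z$ through $z$ is well defined) do not follow from submodularity alone, because the argument breaks when two tight sets have union $V$; the paper fixes a root $r$ and works with $\mathcal{F}_{\rm out},\mathcal{F}_{\rm in}$ over $V-r$ precisely to make these uncrossings valid.
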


Theorem \ref{thm:main01} states that for any orientation of a $2k$-edge-connected undirected graph $G$, there exists a sequence of edge flips such that the orientations of $G$ obtained by the successive edge flips have non-decreasing edge-connectivity and the resulting orientation is $k$-edge-connected.
Figure~\ref{fig:kconn_main1} shows an example.

\begin{figure}[t]
\centering
\includegraphics[width=\textwidth]{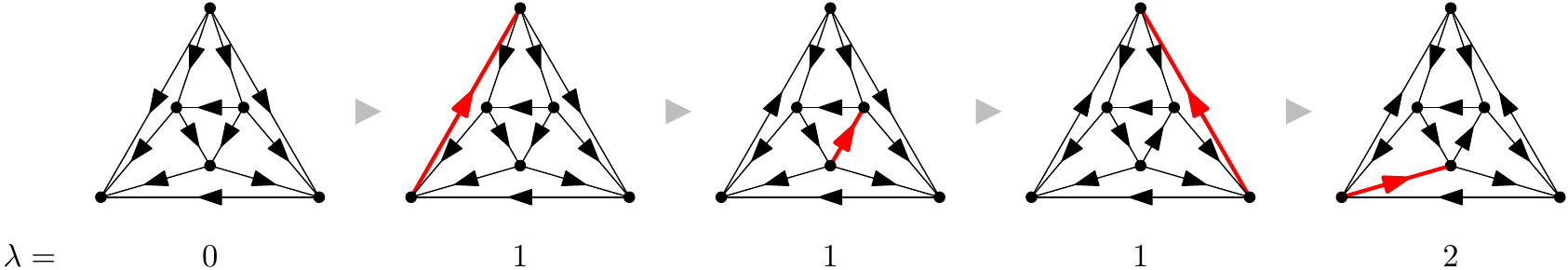}
\caption{An example for Theorem \ref{thm:main01}.
The underlying undirected graph is $4$-edge-connected. The number under each orientation shows its edge-connectivity. Fat red edges depict flipped edges. The list of orientations show that a sequence of edge flips leads to an orientation of edge-connectivity two while all the intermediate orientations have edge-connectivity one.}
\label{fig:kconn_main1}
\end{figure}

Theorem \ref{thm:main01} has several implications.
First, it provides another (algorithmic) proof of Nash-Williams' theorem \cite{nash-williams_1960} by edge flips.
It should be emphasized that the edge-connectivity of the orientation does not decrease during the transformation in Theorem \ref{thm:main01}, 
while Nash-Williams' theorem itself does not provide any guarantee for the edge-connectivity of intermediate orientations.

The second implication is concerned with the connectedness of the edge-flip graph of $k$-edge-connected orientations.
For an undirected graph $G=(V,E)$,
we define the \emph{edge-flip graph} $\GG_k(G)$ as the vertex set
is all the $k$-edge-connected orientations of $G$,
and two orientations are joined by an edge in the edge-flip graph if and only if one is obtained from the other by a single edge flip.
Figure \ref{fig:efgraph1_K4} is an example of the edge-flip graph of the strongly connected orientations.

\begin{figure}[ht]
    \centering
    \includegraphics[scale=0.4]{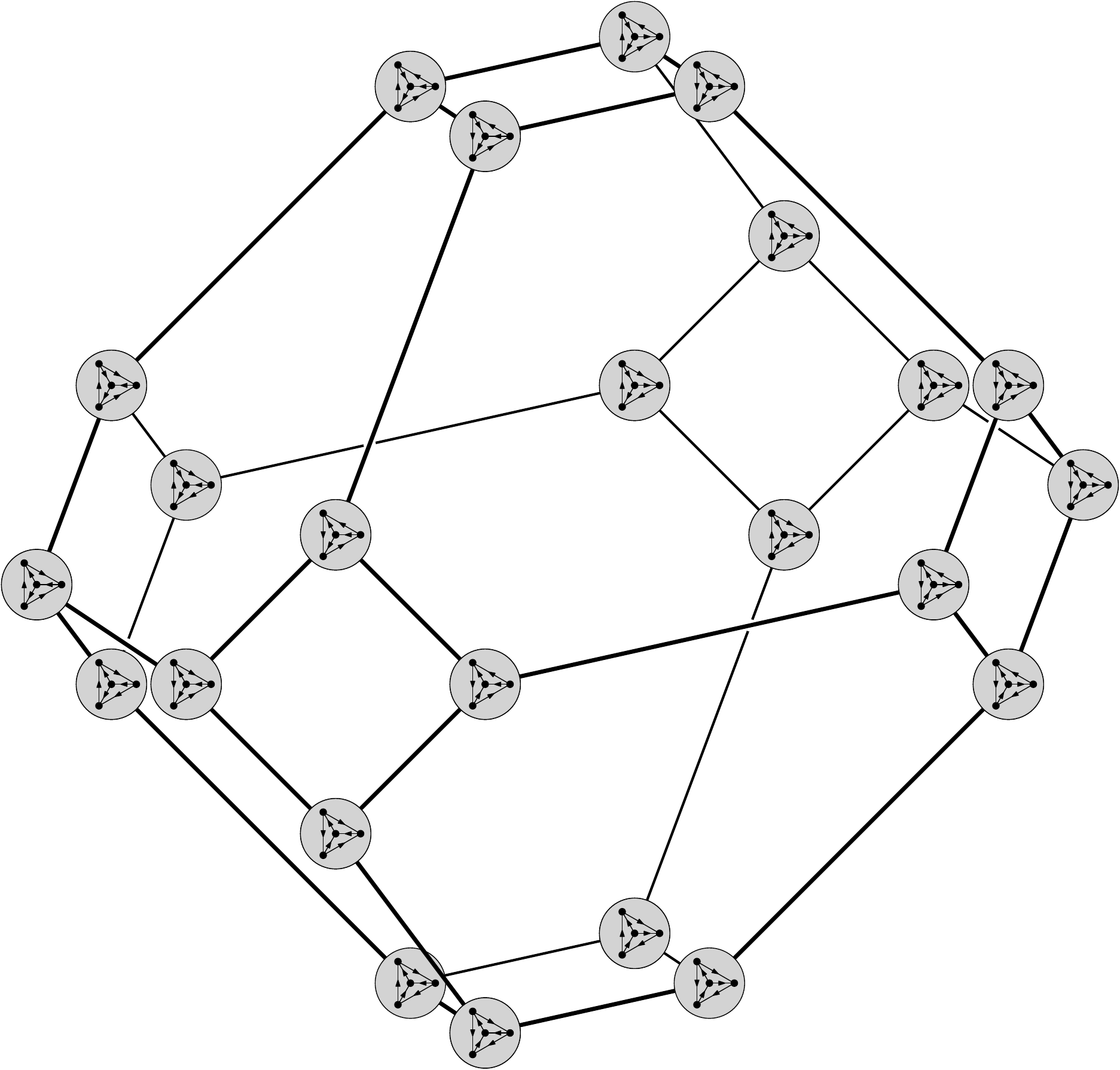}
    \caption{The edge-flip graph $\GG_1(K_4)$ of the strongly connected (i.e., $1$-edge-connected) orientations of a four-vertex complete graph $K_4$.}
    \label{fig:efgraph1_K4}
\end{figure}

Then, we consider the following two questions.
\begin{description}
\item[Global Reachability:] Given a connected undirected graph $G$, is the edge-flip graph $\GG_k(G)$ connected?
\item[Local Reachability:] Given a connected undirected graph $G$ and two $k$-edge-connected orientations $D_1, D_2$ of $G$,
is there a path connecting $D_1$ and $D_2$ in the edge-flip graph $\GG_k(G)$?
\end{description}

When $k=1$, the Global Reachability question is completely answered.
Greene and Zaslavsky~\cite{GZ} proved by hyperplane arrangements that the edge-flip graph $\GG_1(G)$ is connected if and only if $G$ is $3$-edge-connected.
Fukuda, Prodon, and Sakuma~\cite{FPS} gave a graph-theoretic proof for the same fact.

Our second result is a partial answer to the Global Reachability question for $k \ge 2$.
This is a higher-edge-connectedness analogue of the
result by Greene and Zaslavsky~\cite{GZ} and Fukuda, Prodon, and Sakuma~\cite{FPS}.
\begin{theorem}
\label{thm:main}
Let $k \geq 1$.
If $G$ is $(2k+2)$-edge-connected, then the edge-flip graph
$\GG_k(G)$ is connected.
\end{theorem}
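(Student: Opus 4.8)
The plan is to deduce Theorem~\ref{thm:main} from Theorem~\ref{thm:main01}, using the latter to reduce the statement to a ``relative'' reconfiguration claim that I would then settle by induction on the number of oppositely-oriented edges. For the reduction, let $D_1$ and $D_2$ be two $k$-edge-connected orientations of $G$; we must connect them by a path in $\GG_k(G)$. If $\lambda(D_i)\ge k+1$ put $\hat D_i:=D_i$; otherwise $\lambda(D_i)=k$, and since $G$ is $(2k+2)=2(k+1)$-edge-connected, Theorem~\ref{thm:main01} applied with $k$ replaced by $k+1$ produces a flip sequence from $D_i$ to an orientation $\hat D_i$ with $\lambda(\hat D_i)=k+1$ along which the edge-connectivity never decreases. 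In either case every orientation occurring on the way from $D_i$ to $\hat D_i$ has edge-connectivity at least $\lambda(D_i)\ge k$, hence is a vertex of $\GG_k(G)$, and consecutive ones are adjacent in $\GG_k(G)$; thus $D_i$ and $\hat D_i$ lie in the same connected component of $\GG_k(G)$. So it suffices to prove: if $D$ is an orientation of $G$ with $\lambda(D)\ge k$ and $\hat D$ is an orientation of $G$ with $\lambda(\hat D)\ge k+1$, then $D$ and $\hat D$ lie in the same component of $\GG_k(G)$.

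I would prove this by induction on the number $|F|$ of edges oriented oppositely in $D$ and $\hat D$. If $F=\varnothing$ then $D=\hat D$ and there is nothing to show. Otherwise it is enough to find an edge $e\in F$ such that reorienting $e$ in $D$ so that it agrees with $\hat D$ yields an orientation $D'$ with $\lambda(D')\ge k$: then $D\to D'$ is an edge of $\GG_k(G)$ and $D'$ disagrees with $\hat D$ in only $|F|-1$ edges, so we finish by the inductive hypothesis. If $\lambda(D)\ge k+1$ any $e\in F$ works, because reorienting a single edge decreases, for every vertex set $X$, the number of edges leaving $X$ by at most one, and hence decreases $\lambda$ by at most one. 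So assume $\lambda(D)=k$, and call a set $X$ with $\varnothing\ne X\subsetneq V$ \emph{tight} if exactly $k$ edges of $D$ leave $X$. Reorienting $e=(u,v)\in F$ (oriented this way in $D$) keeps the edge-connectivity at least $k$ unless there is a tight set $X$ with $u\in X$ and $v\notin X$; say such an $e$ is \emph{blocked}. It remains to show that not every edge of $F$ is blocked.

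Here two structural facts provide the leverage. First, because $G$ is $(2k+2)$-edge-connected, for every tight set $X$ the number of edges of $D$ leaving $V\setminus X$ equals $d_G(X)$ minus the number leaving $X$, hence is at least $(2k+2)-k=k+2$: the complement of a tight set is far from tight, which sharply limits how two tight sets can cross. Second, because $\lambda(\hat D)\ge k+1$, for every tight set $X$ the number of edges of $F$ entering $X$ exceeds the number leaving $X$ by at least one, since this difference equals the number of edges of $\hat D$ leaving $X$ minus the number of edges of $D$ leaving $X$. Now suppose for contradiction that every edge of $F$ is blocked. I would pick an extremal witness — for instance a tight set $X$ that is inclusion-minimal among all tight sets from which some edge of $F$ leaves, together with such an edge — and, using submodularity of the function ``number of edges leaving $\cdot$'' (so that the intersection and union of two crossing tight sets are again tight, the degenerate cases being controlled with the help of the ``$\ge k+2$'' fact above), derive an impossible infinite descent: the minimality of $X$ together with the strict-inflow inequality should force the several edges of $F$ entering $X$ to be blocked by tight sets that uncross to strictly smaller tight sets still emitting edges of $F$, contradicting finiteness of $V$.

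I expect this last step — the uncrossing/extremality argument ruling out ``$F$ blocked everywhere'' — to be the main obstacle, and the place where the precise bookkeeping of the degenerate uncrossing cases has to be done carefully. The rest, namely the reduction via Theorem~\ref{thm:main01}, the observation that a single flip changes $\lambda$ by at most one, the outer induction on $|F|$, and the small-case bookkeeping (including $\lambda(D_i)>k+1$), is routine.
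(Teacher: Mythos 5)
Your reduction is sound and coincides with the paper's first and last steps: lift each $D_i$ to a $(k+1)$-edge-connected $\hat D_i$ via Theorem~\ref{thm:main01} applied with $k+1$ in place of $k$, noting all intermediate orientations stay in $\GG_k(G)$. Where you diverge is the middle stage. The paper connects $\hat D_1$ and $\hat D_2$ by invoking Frank's path/cycle-flip theorem (Theorem~\ref{thm:Frank82}) between two $(k+1)$-edge-connected orientations and then simulating each path/cycle flip by flipping its edges one by one in traversal order; this works because a directed path crosses any cut in the two directions with multiplicities differing by at most one, so starting from $(k+1)$-edge-connectivity every intermediate orientation keeps both $\delta^+$ and $\delta^-$ at least $k$. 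You instead propose a monotone induction on the disagreement set $F$, whose engine is the lemma that not every edge of $F$ can be blocked by a tight set. That lemma is exactly the step you have not proved, and it is a genuine gap, not bookkeeping. Your two structural facts are correct ($\delta^-_D(X)\ge k+2$ for every tight $X$, and $|F\cap\Delta^-_D(X)|\ge|F\cap\Delta^+_D(X)|+1$ since $\lambda(\hat D)\ge k+1$), and the uncrossing does kill the generic case: if a minimal tight witness $X$ and a tight set $Y$ blocking an $F$-edge entering $X$ satisfy $X\cap Y\ne\emptyset$ and $X\cup Y\ne V$, equality in submodularity forces no edges between $X\setminus Y$ and $Y\setminus X$, contradicting the blocking edge. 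But when $X$ and $Y$ are disjoint, or when $X\cup Y=V$ (in which case $X\setminus Y$ and $Y\setminus X$ become in-tight rather than out-tight and the union/intersection need not be tight), the sketched ``infinite descent'' does not go through, and these degenerate interactions between out-tight and in-tight sets are precisely the difficulty that drives the paper's elaborate machinery (the root $r$, the minimal set $R$, safe sources and safe sinks) in its proof of Theorem~\ref{thm:main02}.

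Moreover, your lemma, if true, would yield something strictly stronger than Theorem~\ref{thm:main}: a flip sequence from any $k$-edge-connected $D$ to any $(k+1)$-edge-connected $\hat D$ that only ever flips disagreeing edges, i.e., a path of length exactly $|F|$ in $\GG_k(G)$. Nothing in the paper supports such a monotone statement for $k\ge 2$, and the concluding remarks explicitly caution that for $k=2$ the symmetric difference of two $k$-edge-connected orientations does not determine a shortest path, so the ``just flip a disagreeing edge'' intuition is known to be fragile in closely related settings. As it stands, the heart of your second stage is an unproved claim whose truth is unclear and whose proof, if it exists, would likely require machinery comparable to (or replaceable by) the paper's use of Theorem~\ref{thm:Frank82}; you should either prove the blocking lemma in full, including the disjoint and co-spanning cases, or fall back on Frank's theorem as the paper does.
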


Theorem \ref{thm:main} is obtained as a corollary of Theorem \ref{thm:main01}, combined with a result by Frank \cite{Frank82}.
The proof implies that the diameter of $\GG_k(G)$ is $O(k|V|^3+|E|^2)$
when $G$ is $(2k+2)$-edge-connected.
Note that $\GG_k(G)$ has no vertex if the edge-connectivity of $G$ is less than $2k$.   

We do not know if the $(2k+2)$-edge-connectedness can be replaced with the $(2k+1)$-edge-connectedness when $k \ge 2$. 
However, we know that we cannot replace it with the $2k$-edge-connectedness. 
Indeed, there exists a $2k$-edge-connected graph $G$ such that $\GG_k(G)$ is disconnected even when $k=1$ (e.g.\ consider the clockwise orientation and the counterclockwise orientation of a $3$-cycle).

For the Local Reachability question, we have the following characterization when $k=1$.
\begin{theorem}
\label{thm:local}
Let $G=(V,E)$ be a $2$-edge-connected graph and
$D_1, D_2$ be strongly connected orientations of $G$.
Then, there exists a path connecting $D_1$ and $D_2$
in the edge-flip graph $\GG_1(G)$ if and only if there exists
no $2$-edge-cut $\{\{u,v\}, \{u',v'\}\}$ such that
$(u,v), (v',u')$ are edges of $D_1$ and
$(v,u), (u',v')$ are edges of $D_2$.
Furthermore, a shortest path between two strongly connected orientations can be found in polynomial time if one exists.
\end{theorem}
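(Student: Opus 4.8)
The plan is to treat the two directions of the equivalence separately and to extract the shortest-path statement from the ``if'' direction. Write $F$ for the set of edges of $G$ that receive different orientations in $D_1$ and $D_2$. I would first record a normalization: for every $2$-edge-cut $\{e,e'\}$ of $G$ and all strongly connected orientations $D_1,D_2$, either both of $e,e'$ lie in $F$ or neither does. Indeed, across the bipartition cut by $\{e,e'\}$ strong connectivity forces exactly one of $e,e'$ to cross in each direction, so reversing only one of them would leave one side with no entering (or no leaving) edge. Since ``both $e,e'\in F$'' is precisely the configuration forbidden in the statement, the hypothesis of the theorem is equivalent to: $F$ contains no $2$-edge-cut, equivalently every $e\in F$ lies in no $2$-edge-cut of $G$, equivalently $G-e$ is $2$-edge-connected for every $e\in F$.

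For the ``only if'' direction, suppose $\{e,e'\}$ is a $2$-edge-cut with $e,e'\in F$, and take any walk from $D_1$ to $D_2$ in $\GG_1(G)$. Some step is the first to reverse one of $e,e'$; just before it, both $e$ and $e'$ still carry their $D_1$-orientations and hence cross the $2$-cut in opposite directions, so reversing one of them makes both cross the same way and the next orientation is not strongly connected, contradicting that it lies in $\GG_1(G)$. Hence no such walk exists.

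For the ``if'' direction the heart of the matter is a \emph{flippable-edge lemma}: if $D_1,D_2$ are strongly connected orientations of a $2$-edge-connected $G$ with $F\neq\emptyset$ and $G-e$ $2$-edge-connected for every $e\in F$, then some $f\in F$ can be reversed in $D_1$ with the result still strongly connected. I would prove it by an uncrossing argument on ``dangerous sets''. The basic observation is that reversing an arc $(u,v)$ of a strongly connected digraph preserves strong connectivity iff $D-(u,v)$ still has a directed path from $u$ to $v$, equivalently iff $(u,v)$ is not the unique arc leaving some vertex set $X$ with $u\in X\not\ni v$. Assume for contradiction that every $f=(u_f,v_f)\in F$ fails this, and let $R_f$ be the set of vertices reachable from $u_f$ in $D_1-f$; then $f$ is the unique arc leaving $R_f$ in $D_1$, $R_f$ is the smallest such set, and since $G-f$ is $2$-edge-connected at least two arcs enter $R_f$. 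Because $D_2$ is strongly connected and $f$ reversed enters $R_f$ in $D_2$, some arc $f'\neq f$ leaves $R_f$ in $D_2$; this $f'$ must enter $R_f$ in $D_1$, so $f'\in F$, and a short uncrossing using the minimality of $R_f$ shows $R_{f'}\cap R_f=\emptyset$ with $f'$ running from $R_{f'}$ into $R_f$ in $D_1$. Iterating from an arbitrary $f_0\in F$ gives an infinite sequence $f_0,f_1,\dots$ in $F$ with consecutively disjoint dangerous sets; since $f_{a+1}$ can be taken as a function of $f_a$, the sequence is eventually periodic, and one full period yields a cyclic family of dangerous sets, consecutively disjoint, each with its unique out-arc pointing into its predecessor. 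Such a family must cover all of $V$ because no arc can leave the union of its members; granting that the members are then pairwise disjoint — the delicate point, which I would establish by repeated uncrossing — each receives exactly one arc in $D_1$, contradicting that it receives at least two. With the lemma proved, the ``if'' direction follows by induction on $|F|$: reverse a flippable $f\in F$ in $D_1$; the new orientation is strongly connected and its symmetric difference with $D_2$ is $F\setminus\{f\}$, to which the hypothesis passes. (An alternative route is to split $G$ repeatedly at its $2$-edge-cuts, reducing to the $3$-edge-connected case, where one may invoke the theorem of Fukuda, Prodon, and Sakuma, after checking that such splits preserve which edges lie in $2$-edge-cuts and the agreement of $D_1,D_2$ on those edges.)

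The induction builds a walk from $D_1$ to $D_2$ in $\GG_1(G)$ of length exactly $|F|$; any walk between them reverses each edge of $F$ an odd, hence positive, number of times, so $|F|$ is a lower bound and the walk is a shortest path, and it is produced in polynomial time since each step only requires one directed-reachability test per edge of $F$ to find a flippable one. The main obstacle is the flippable-edge lemma — in particular, controlling how the dangerous sets $R_{f_a}$ may overlap so that the cyclic configuration genuinely forces the contradiction; the necessity direction, the reformulation, and the greedy conclusion are routine.
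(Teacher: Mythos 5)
Your overall architecture is the same as the paper's: reduce to a flippable-edge lemma (some edge of the symmetric difference $F$ can be reversed in $D_1$ keeping strong connectivity), induct on $|F|$, and get shortestness for free since every walk must flip each edge of $F$ at least once. Your necessity argument, the normalization (the hypothesis is equivalent to ``no $2$-edge-cut of $G$ is contained in $F$''), and the final counting and polynomial-time remarks are all correct. The problem is the proof of the flippable-edge lemma itself, and the gap you flag is exactly where the difficulty of the theorem lives, so it cannot be waved through. Your cyclic construction establishes only \emph{consecutive} disjointness of the dangerous sets $R_{g_1},\dots,R_{g_m}$, but the concluding count (``each member receives exactly one arc in $D_1$, yet must receive at least two'') requires the family to be \emph{pairwise} disjoint: if non-consecutive members overlap or are nested, an arc that is internal to one $R_{g_i}$ can enter another $R_{g_j}$, the in-arcs of the members are no longer confined to $\{g_1,\dots,g_m\}$, and the contradiction evaporates. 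The ``repeated uncrossing'' you invoke is not routine here: for a non-consecutive pair with $W=R_{g_i}\cap R_{g_j}\neq\emptyset$ one only gets $\Delta^+_{D_1}(W)\subseteq\{g_i,g_j\}$, and in the case where \emph{both} $g_i$ and $g_j$ leave $W$ neither minimality applies, so the simple nesting/descent argument stalls; closing this case (or redesigning the choice of $f_{a+1}$ so that it cannot arise) is substantial unfinished work, not a detail.

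For comparison, the paper avoids cyclic families entirely by an extremal two-set argument. Pick any $e=(u,v)\in F$; if it is not flippable, choose $X$ with $\Delta^+_{D_1}(X)=\{e\}$ \emph{maximizing} $\delta^-_{D_1}(X)$ (rather than your minimal reachability set). Since $(v,u)$ is an arc of $D_2$ and $\delta^+_{D_2}(X)\ge 1$, some $f\in\Delta^-_{D_1}(X)$ is reversed in $D_2$, i.e.\ $f\in F$. If $f$ were not flippable, with witness $Y$ satisfying $\Delta^+_{D_1}(Y)=\{f\}$, the no-$2$-cut hypothesis forces $\delta^-_{D_1}(Y)\ge 2$ (if $\delta^-_{D_1}(Y)=1$, the two edges of $E_G(Y)$ would form a forbidden cut), and then uncrossing $X$ with $Y$ — taking $X\cup Y$ when $X\cap Y=\emptyset$ and $X\cap Y$ when they meet, with a submodularity computation ruling out $X\cup Y\subsetneq V$ in the latter case — produces a set with unique out-arc $e$ and strictly larger in-degree than $X$, contradicting the maximality. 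This single extremal choice does the work that your pairwise-disjointness claim is supposed to do; I recommend replacing your cyclic-family argument by it (or by a complete proof of the disjointness claim, which you have not given). Your alternative sketch via splitting at $2$-edge-cuts and invoking Fukuda--Prodon--Sakuma is also not yet a proof: one must ensure the flip sequences inside the $3$-edge-connected pieces never flip the virtual edges replacing the cut pairs, which requires their shortest-path (symmetric-difference) statement rather than mere connectivity, and none of this is checked.
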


We note that an analogous statement for higher edge-connectedness does not hold.
See the example in \figurename~\ref{fig:discon_example1}.
In this example, $D_1$ and $D_2$ are $2$-edge-connected orientations of a $4$-edge-connected graph $G$.
For any $4$-edge-cut in $G$, 
their direction in $D_1$ is the same as in $D_2$. 
On the other hand, one can see that there is no edge flip of $D_1$ that maintains $2$-edge-connectedness, 
which implies that $\GG_k(G)$ contains no path connecting $D_1$ and $D_2$. 

\begin{figure}[t]
\centering
\includegraphics{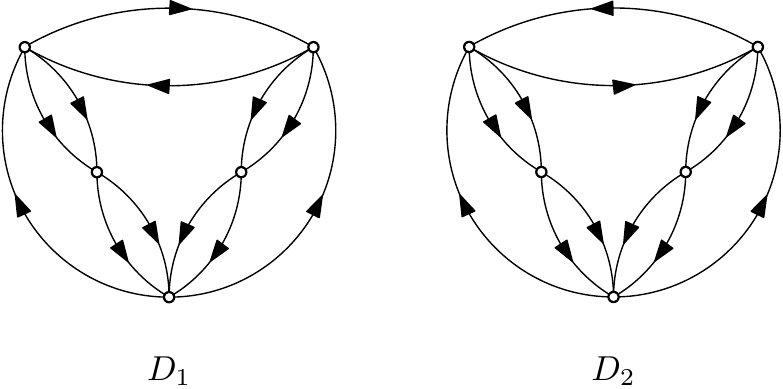}
\caption{This example shows that an analogous statement to Theorem \ref{thm:local} does not hold for $k=2$.}
\label{fig:discon_example1}
\end{figure}

\subsection*{Related Work}

Orientations of graphs have been a subject of intensive studies
in the literature of graph theory and combinatorial optimization.

Robbins~\cite{robbins} shows that an undirected graph $G$ has a strongly connected orientation if and only if $G$ is $2$-edge-connected.
Robbins' original proof is based on ear decompositions, which yields a linear-time algorithm~\cite{HopcroftT73}.
The Global Reachability of the edge-flip graph of strongly connected orientations is investigated by Greene and Zaslavsky \cite{GZ} and by 
Fukuda et al.~\cite{FPS}: 
They proved that the edge-flip graph of the strongly connected orientations of an undirected graph $G$ is connected if and only if $G$ is $3$-edge-connected, and in this case, a shortest path between two strongly connected orientations can be found in polynomial time.

Nash-Williams~\cite{nash-williams_1960} shows that an undirected graph $G$ has a $k$-edge-connected orientation if and only if $G$ is $2k$-edge-connected, where $k \geq 1$ is an integer.
Robbins' theorem \cite{robbins} corresponds to the case where $k=1$.
Nash-Williams' original proof is based on the so-called ``odd-node pairing theorem'' and Eulerian orientations.
See also~\cite{KiralyS06} for a simpler proof.
Since an odd-node pairing with the desired property can be found in polynomial time~\cite{Gabow94,NagamochiI97}, this proof technique yields a polynomial-time algorithm to find a $k$-edge-connected orientation of a $2k$-edge-connected graph.
Other proofs are based on the ``splitting-off theorem'' by Lov\'asz~\cite{lovasz_exercises} and submodular flows~\cite{frank_1993}.
Those two proofs also yield polynomial-time algorithms.

Nash-Williams' theorem can be generalized to the existence of an orientation satisfying a certain local connectivity constraint, 
which is called a well-balanced orientation~\cite{nash-williams_1960}. 
A further extension is shown by Fukunaga~\cite{FUKUNAGA20122349}. 
Bern\'{a}th et al.~\cite{BERNATH2008663} show some results on well-balanced orientations.  

When $k\geq 2$, 
the edge-flip graph of $k$-edge-connected orientations has not been studied.
Frank~\cite{Frank82} proved that the path/cycle-flip graph of $k$-edge-connected orientations of an undirected graph $G$ is connected if and only if $G$ is $2k$-edge-connected, where a \emph{path/cycle flip} is an operation that flips all the edges of a directed path or a directed cycle simultaneously.
Since this result will be used in our proof of Theorem \ref{thm:main}, we highlight it in the following theorem, where we also include a bound for the length of a sequence that was implicit in his proof.
\begin{theorem}[Frank \cite{Frank82}]
\label{thm:Frank82}
Let $k\geq 1$ be an integer, $G=(V,E)$ be a $2k$-edge-connected undirected graph, and $D_1, D_2$ be two $k$-edge-connected orientations of $G$.
Then, $D_1$ and $D_2$ can be transformed with each other by a sequence of path/cycle flips in such a way that all the intermediate orientations are $k$-edge-connected.
The length of such a sequence is bounded by $O(|E|)$ from above, and can be found in polynomial time.
\end{theorem}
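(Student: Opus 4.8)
The plan is to prove the theorem, reconstructing Frank's argument with an explicit length bound, by induction on $|F|$, where $F$ denotes the set of edges of $G$ on which the current orientation $D$ (initially $D=D_1$) and $D_2$ disagree; when $F=\emptyset$ we have $D=D_2$ and are done. For the inductive step I will exhibit a single path/cycle flip, using only edges of $F$ and applied to $D$, that keeps the orientation $k$-edge-connected and strictly decreases $|F|$. Iterating this, together with $|F|\le|E|$, yields the sequence and the $O(|E|)$ bound, and since a path/cycle flip reversed is again a path/cycle flip, running the sequence backwards transforms $D_2$ into $D_1$ through the same $k$-edge-connected orientations, giving the two-sided statement.

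Two facts about batch flips drive the step. First, flipping all edges of a directed cycle $C$ leaves $d^+_D(X)$ unchanged for every $X$ (a closed walk crosses $\partial X$ equally often in each direction, and flipping swaps the two directions), so a cycle flip preserves $\lambda(D)$ and is always safe. Second, flipping all edges of a directed path $P$ from $s$ to $t$ decreases $d^+_D(X)$ by one exactly for the sets $X$ with $s\in X\not\ni t$, hence it is safe (keeps $\lambda\ge k$) if and only if $\kappa_D(s,t)\ge k+1$, where $\kappa_D(s,t)$ is the local edge-connectivity from $s$ to $t$, i.e.\ the minimum of $d^+_D(X)$ over such $X$. Now consider $D[F]$, the sub-digraph of $D$ formed by the edges in $F$. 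If $D[F]$ contains a directed cycle $C$, flip $C$: the result is $k$-edge-connected and the new disagreement set is $F\setminus E(C)$, which is strictly smaller. Otherwise $D[F]$ is acyclic, and the claim I need is that $D[F]$ then contains a directed path $P$ from a source $s$ to a sink $t$ of $D[F]$ with $\kappa_D(s,t)\ge k+1$; flipping $P$ is safe and shrinks $F$ by $|E(P)|\ge1$.

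Proving this claim is the crux and, I expect, the main obstacle. The natural route is by contradiction: suppose every source-to-sink path of $D[F]$, say from $s$ to $t$, is blocked by a $D$-tight cut $X$ (one with $d^+_D(X)=k$) satisfying $s\in X\not\ni t$. Since $D_2$ is also $k$-edge-connected and $D_2$ is obtained from $D$ by flipping all remaining disagreement edges, every $D$-tight cut $X$ satisfies $k\le d^+_{D_2}(X)=k-d^+_{D[F]}(X)+d^-_{D[F]}(X)$, so each $D$-tight cut has at least as many edges of $F$ entering as leaving. Taking an inclusion-minimal $D$-tight cut $X$ from which some edge of $F$ leaves, this inequality forces an edge of $F$ to enter $X$ as well; uncrossing $X$ against the tight cut witnessing that entering edge, using submodularity of $d^+_D(\cdot)$ and minimality of $X$, should produce a strictly smaller $D$-tight cut that still loses an edge of $F$ — a contradiction — with a separate treatment of the degenerate case where the two tight cuts have union $V$, the case in which the acyclicity of $D[F]$ must be invoked. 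The delicate points are precisely this degenerate case and checking that the uncrossing preserves the property ``loses an edge of $F$''; a clean contradiction may require choosing $X$, or the source–sink pair, by a secondary extremal criterion.

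Finally, each flip strictly reduces $|F|$ and $|F|\le|E|$ initially, so at most $|E|$ path/cycle flips are performed. Each step is found in polynomial time: search for a directed cycle in $D[F]$, and otherwise compute, by polynomially many max-flow computations, a source–sink pair $(s,t)$ of the acyclic digraph $D[F]$ with $\kappa_D(s,t)\ge k+1$ and the corresponding path. This gives both the claimed $O(|E|)$ length bound and the polynomial-time algorithm.
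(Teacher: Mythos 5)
The paper does not actually prove this statement: Theorem~\ref{thm:Frank82} is imported from Frank~\cite{Frank82}, with only the $O(|E|)$ bound noted as implicit in his proof, so there is no in-paper argument to compare against. Judged on its own, your reconstruction is sound in its routine parts but has a genuine gap exactly at its mathematical core. The bookkeeping is correct: flipping a directed cycle leaves $\delta^+_D(X)$ unchanged for every $X$; flipping an $(s,t)$-path is safe if and only if no set $X$ with $s\in X$, $t\notin X$ satisfies $\delta^+_D(X)=k$; each flip strictly shrinks the disagreement set $F$; and $|F|\le |E|$ gives the length bound and the reversibility remark. But the whole content of Frank's theorem is the claim you yourself label the crux: when $D[F]$ is acyclic and nonempty, some source--sink pair of $D[F]$ joined by an $F$-path has local edge-connectivity at least $k+1$ in $D$. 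This is asserted, not proved.

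Moreover, the sketch you give cannot close as written. You want a contradiction from the hypothesis that every source-to-sink pair is separated by a tight cut, yet the outlined argument (take an inclusion-minimal tight $X$ with an $F$-edge leaving; since every tight set has at least as many $F$-edges entering as leaving, an $F$-edge enters $X$; uncross $X$ with ``the tight cut witnessing that entering edge'') never invokes the blocking hypothesis at all, and there is in general no tight cut canonically attached to the entering edge. Tight sets with $F$-edges leaving them can perfectly well exist --- that is exactly why some paths are unflippable --- so producing a smaller such set is not by itself a contradiction. The degenerate case where the two uncrossed sets cover $V$, and the ``secondary extremal criterion'' you defer, are precisely where the acyclicity of $D[F]$, the role of $r$-free versus $r$-containing cuts, and the $k$-edge-connectivity of $D_2$ must be combined; without that, the inductive step, and hence the theorem, is unproven. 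Filling this in essentially amounts to reproving Frank's exchange/uncrossing lemma for $k$-edge-connected orientations (equivalently, the exchange property of integral submodular flows), which is the reason the present paper cites \cite{Frank82} rather than supplying a proof.
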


Acyclic orientations are well-studied objects.
An orientation is \emph{acyclic} if it has no directed cycle.
It is easy to see that every undirected graph has an acyclic orientation.
The Global Reachability question is completely answered.
Greene and Zaslavsky~\cite{GZ} gave a geometric proof that
the edge-flip graph of acyclic orientations is connected.
Fukuda, Prodon, and Sakuma~\cite{FPS} gave a graph-theoretic proof for the same fact.
This trivially answers the Local Reachability question, too.
Indeed, their proofs give a shortest path between 
two acyclic orientations in the edge-flip graph, which can be found in polynomial time.

Degree-constrained orientations form another class of well-studied orientations.
In this case, we are also given a non-negative number $m(v)$ for every vertex $v$ of an undirected graph $G = (V,E)$.
Hakimi \cite{hakimi} proved that 
there exists an orientation of $G$ such that every vertex $v$ has the in-degree of $m(v)$
if and only if $|E|=\sum_{v \in V}m(v)$ and 
$|\{e \in E \mid e \subseteq X\}| \leq \sum_{v \in X}m(v)$ for all $X \subseteq V$: such an orientation can be found in polynomial time if exists.

To define the flip graph of degree-constrained orientations, an edge flip is useless since
a single edge flip does not keep the required degree property of the orientations.
Instead, we consider a \emph{cycle flip} that flips all the edges in a single directed cycle simultaneously.
A cycle flip preserves the property that the in-degree of every vertex $v$ is $m(v)$.
Therefore, the \emph{cycle-flip graph} of degree-constrained orientations has been studied in the literature.

The Global Reachability of the cycle-flip graph of degree-constrained orientations is known to hold as long as it is non-empty (see~\cite{frank_book}).
Thus, the Local Reachability question is again trivial.
However, computing a shortest path in the cycle-flip graph of degree-constrained orientations is NP-hard \cite{AichholzerCHKMS21,ItoKK0O19}.

Orientations with vertex-connectivity constraints are also studied in the literature. 
It is conjectured by Thomassen~\cite{THOMASSEN1989402} that, 
for any positive integer $k$, there exists a smallest positive integer $f(k)$ such that every $f(k)$-connected graph has a $k$-connected orientation.
Frank~\cite{10.5555/233157.233167} proposed a stronger conjecture: for any positive integer $k$, 
a graph $G=(V, E)$ has a $k$-connected orientation 
if and only if $G - U$ is $2(k-|U|)$-edge-connected for any $U \subseteq V$ with $|U| \le k$. 
Jord\'{a}n~\cite{JORDAN2005257} shows that 
$f(2) \le 18$ based on a result by Berg and Jord\'{a}n~\cite{DBLP:journals/jgt/BergJ06}, 
and this upper bound is improved to $14$ by Cheriyan, Durand de Gevigney, and Szigeti~\cite{CHERIYAN201417}. 
Thomassen~\cite{THOMASSEN201567} proves Frank's conjecture for $k=2$, that is, 
a graph $G=(V, E)$ admits a $2$-connected orientation 
if and only if it is $4$-edge-connected and $G-v$ is $2$-edge-connected for every $v \in V$. 
This implies that $f(2)=4$. 
For general $k$, Frank's conjecture was disproved recently by Durand de Gevigney~\cite{DURANDDEGEVIGNEY2020105}.
The existence of $f(k)$ is still open for $k \ge 3$. 

Frank, Kir\'{a}ly, and Kir\'{a}ly~\cite{FRANK2003385} proved that 
many known graph orientation theorems can be extended to hypergraphs.

\subsection*{Organization}
The rest of this paper is organized as follows. 
In Section~\ref{sec:preliminaries}, we give preliminary definitions and useful facts. 
In Section~\ref{sec:reconfproof}, 
we prove Theorem~\ref{thm:main} by using Theorem~\ref{thm:main01}. 
A proof of Theorem~\ref{thm:main01}, which is the main result in this paper, 
is given in Section~\ref{sec:mainproof}. 
In Section~\ref{sec:localproof}, we give a proof of Theorem~\ref{thm:local}. 
In Section~\ref{sec:conclusion}, we conclude this paper with some remarks.


\section{Preliminaries}
\label{sec:preliminaries}

\subsection{Undirected Graphs}

An \emph{undirected graph} $G=(V,E)$ is a pair of
its vertex set $V$ and its edge set $E$, where
each edge $e \in E$ is specified by an unordered pair $\{u,v\}$ of vertices: in this case
$u$ and $v$ are \emph{endpoints} of the edge $e$.
We allow multiple edges, and thus $E$ is considered
a multiset.

A \emph{path} in an undirected graph $G=(V,E)$
is a sequence $v_0,v_1,\dots,v_\ell$ of distinct vertices
such that $\{v_i, v_{i+1}\} \in E$ for every
$i\in \{0,1,\dots,\ell-1\}$: in this case
the path \emph{connects} $v_0$ and $v_\ell$, and 
$\ell$ is the \emph{length} of the path.
A path that connects $v_0$ and $v_\ell$ is also called
a \emph{$(v_0,v_\ell)$-path}.

For an undirected graph $G=(V,E)$ and a vertex subset
$S \subseteq V$, we denote by $E_G(S)$ the set of edges of $G$ that
have one endpoint in $S$ and the other endpoint in $V  -  S$:
\[
    E_G(S) \defeq \{e \in E \mid |e \cap S| = 1\}.
\]
Define $\delta_G(S) \defeq |E_G(S)|$.
The set $E_G(S)$ is a \emph{$k$-edge-cut} of $G$ if $|E_G(S)|=k$.

For $k\geq 1$, 
an undirected graph $G=(V,E)$ is \emph{$k$-edge-connected} 
if $\delta_G(S) \geq k$ for every non-empty $S \subsetneq V$.
By Menger's theorem, this is equivalent to the condition that 
there exists a set of $k$ edge-disjoint $(s,t)$-paths for every
pair of distinct vertices $s,t\in V$.
An undirected graph is \emph{connected} if it is
$1$-edge-connected: 
an undirected graph is \emph{disconnected} if it is 
not connected.
It is easy to observe that 
$G$ is $k$-edge-connected if and only if
there exists no $\ell$-edge-cut in $G$ for $\ell = 0,1,\dots,k-1$ except $E_G(\emptyset)$ and $E_G(V)$.

\subsection{Directed Graphs}

A \emph{directed graph} $D=(V,A)$ is a pair of
its vertex set $V$ and its edge set $A$, where
each directed edge $e \in A$ is specified by an ordered pair
$(u,v)$ of vertices: in this case
$u$ is the \emph{tail} of $e$ and $v$ is the \emph{head}
of $e$.
We allow multiple edges, and thus $A$ is considered a
multiset.

A \emph{path} in a directed graph $D=(V,A)$
is a sequence $v_0,v_1,\dots,v_\ell$ of distinct vertices
such that $(v_i, v_{i+1}) \in A$ for every
$i\in \{0,1,\dots,\ell-1\}$: in this case
the path \emph{connects} $v_0$ and $v_\ell$, and 
$\ell$ is the \emph{length} of the path.
A path that connects $v_0$ and $v_\ell$ is also called
a \emph{$(v_0,v_\ell)$-path}.
For $i, j \in \{0,1,\dots,\ell-1\}$ with $i \le j$, 
let $P[v_i, v_j]$ denote the subpath of $P$ from $v_i$ to $v_j$, that is, 
$P[v_i, v_j]$ is the sequence $v_i, v_{i+1}, \dots , v_j$. 
For a path $P$, the set of vertices (resp.~edges) in $P$ is denoted by $V(P)$ (resp.~$A(P)$). 

Let $D=(V,A)$ be a directed graph and $S \subseteq V$ a vertex subset.
The subgraph induced by $S$ is denoted by $D[S]$.
An edge $e \in A$ \emph{leaves} $S$
if the tail of $e$ belongs to $S$ but the head of $e$ does not belong to $S$.
Similarly, an edge $e \in A$ \emph{enters} $S$
if the head of $e$ belongs to $S$ but the tail of $e$ does not belong to $S$.
We denote by $\Delta^+_D(S)$ the set of edges in $A$ that leave $S$, and similarly 
by $\Delta^-_D(S)$ the set of edges in $A$ that enter $S$:
\[
    \Delta^+_D(S) \defeq \{ (u,v) \in A \mid u \in S, v \not\in S\},
    \qquad
    \Delta^-_D(S) \defeq \{ (u,v) \in A \mid u \not\in S, v \in S\}.
\]
Define $\delta^+_D(S) \defeq |\Delta^+_D(S)|$ and
$\delta^-_D(S) \defeq |\Delta^-_D(S)|$.

For $k\geq 1$, 
a directed graph $D=(V,A)$ is \emph{$k$-edge-connected} 
if $\delta^+_D(S) \geq k$
and $\delta^-_D(S) \geq k$ 
for every non-empty $S \subsetneq V$.\footnote{%
One of the conditions is actually redundant since $\delta^-_D(S)\geq k$ implies $\delta^+_D(V -  S)\geq k$.
}
By Menger's theorem, this is equivalent to the condition that
there exists a set of $k$ edge-disjoint $(s,t)$-paths for every
pair of distinct vertices $s,t\in V$.
A directed graph is \emph{strongly connected}
if it is $1$-edge-connected.
The \emph{edge-connectivity} of a directed graph $D$ is the
maximum integer $k$ such that $D$ is $k$-edge-connected, and is denoted by $\lambda(D)$.

Simple counting shows that the functions $\delta^+_D, \delta^-_D$ satisfy the following inequalities:
for all  $X, Y \subseteq V$,
\[
    \delta^+_D(X) + \delta^+_D(Y) \geq \delta^+_D(X\cap Y) + \delta^+_D(X \cup Y),\qquad 
    \delta^-_D(X) + \delta^-_D(Y) \geq \delta^-_D(X\cap Y) + \delta^-_D(X \cup Y).
\]
The two inequalities are referred to as \emph{submodularity}.

\section{Proof of Theorem~\ref{thm:main}}
\label{sec:reconfproof}

Let $k \geq 1$ and $G$ be an undirected $(2k+2)$-edge-connected graph.
We will show that the edge-flip graph
$\GG_k(G)$ is connected by using Theorem \ref{thm:main01}.

Let $D_1,D_2$ be $k$-edge-connected orientations of $G$.
We want to find a sequence of edge flips that transforms $D_1$ to $D_2$ in such a way that all the intermediate orientations are $k$-edge-connected.

Below is our strategy.
\begin{enumerate}
    \item We apply Theorem \ref{thm:main01} to transform $D_1$ to a $(k+1)$-edge-connected orientation $D'_1$ by edge flips so that all the intermediate orientations are $k$-edge-connected.
    This can be done by 
    the assumption that $G$ is $(2k+2)$-edge-connected.
    We apply the same procedure to $D_2$ to obtain a $(k+1)$-edge-connected orientation $D'_2$.
    \item Then, we apply Theorem \ref{thm:Frank82} due to Frank~\cite{Frank82} to transform $D'_1$ to $D'_2$. Since operations in Theorem \ref{thm:Frank82} are path/cycle flips, we need to turn them into sequences of edge flips.
    We emphasize that all the intermediate orientations will be $k$-edge-connected, but not necessarily $(k+1)$-edge-connected.
    \item Finally, we consider the reverse sequence of edge flips that transformed $D_2$ to $D'_2$ from the first step. Combining them, we obtain a sequence of edge flips that transforms $D_1$ to $D_2$ such that all the intermediate orientations are $k$-edge-connected.
\end{enumerate}

In the strategy above, the first and the third steps are clear, and the numbers of necessary steps are $O(k|V|^3)$ by Theorem \ref{thm:main01}.
We concentrate on the second step.
Let $D'_1$, $D'_2$ be $(k+1)$-edge-connected orientations of $G$.
Then, by Theorem \ref{thm:Frank82}, there exists a sequence of path/cycle flips that transforms $D'_1$ to $D'_2$ in such a way that all the intermediate orientations are $(k+1)$-edge-connected.
Let $D'_1=\hat{D}_0, \hat{D}_1, \dots, \hat{D}_\ell = D'_2$ be a sequence of orientations of $G$ such that $\hat{D}_{i}$ is obtained from $\hat{D}_{i-1}$ by a path/cycle flip and $\hat{D}_i$ is $(k+1)$-edge-connected for $i \in \{1,\dots,\ell\}$.

We now fix $i \in \{1,\dots,\ell\}$, and we will construct a sequence of orientations from $\hat{D}_{i-1}$ to $\hat{D}_{i}$ by edge flips.
Let $F$ be a directed path/cycle in $\hat{D}_{i-1}$ such that flipping the edges in $F$ yields $\hat{D}_{i}$.
Suppose that $F$ traverses arcs $e_1, e_2, \dots , e_m$ in this order, where $m$ is the number of edges in $F$.
Then, we flip $e_1, e_2, \dots , e_m$ one by one in this order.
The obtained sequence of orientations is denoted by $\hat{D}_{i-1} = \tilde{D}_0, \tilde{D}_1, \dots, \tilde{D}_m = \hat{D}_{i}$.
Note that $\tilde{D}_j$ is obtained from $\tilde{D}_0$ by flipping the edges of the path $\{e_1, \dots, e_j\}$.

We prove $\tilde{D}_j$ is $k$-edge-connected for any $j\in \{1,\dots,m-1\}$.
Let $X$ be a non-empty proper subset of $V$. 
We already know $\tilde{D}_0$ is $(k+1)$-edge-connected, that is, 
$\delta^+_{\tilde{D}_0}(X) \ge k+1$ and 
$\delta^-_{\tilde{D}_0}(X) \ge k+1$.
Furthermore, since $\{e_1,e_2,\dots,e_j\}$ forms a path in $\tilde{D}_0$,
\[
\Big|\big|\Delta^+_{\tilde{D}_0}(X) \cap \{e_1,e_2,\dots,e_j\}\big|
-
\big|\Delta^-_{\tilde{D}_0}(X) \cap \{e_1,e_2,\dots,e_j\}\big|\Big| 
\le 1. 
\]
This implies that $\delta^+_{\tilde{D}_j}(X) \ge k$ and 
$\delta^-_{\tilde{D}_j}(X) \ge k$.
Hence, $\tilde{D}_j$ is $k$-edge-connected for any $j \in \{0,\dots,m\}$.

Therefore, we can construct a sequence of $k$-edge-connected orientations from $D'_1$ to $D'_2$ by edge flips.
The length of the sequence for the second step is $O(|E|^2)$ by Theorem~\ref{thm:Frank82}.
This completes the proof. 
\qed


\section{Proof of the Main Theorem (Theorem \ref{thm:main01})}
\label{sec:mainproof}

Remind that for two orientations $D$ and $D'$ of $G$, we denote $D \rightarrow D'$ if $D'$ is obtained from $D$ by a single edge flip.
In this section, we show that, 
given a $k$-edge-connected orientation $D$ of $G$, 
we can increase the edge-connectivity of $D$ via a sequence of edge flips
without losing the $k$-edge-connectedness.

\begin{theorem}
\label{thm:main02}
Let $k$ be a non-negative integer. Let $G=(V, E)$ be an undirected $(2k+2)$-edge-connected graph and 
$D=(V,A)$ be a $k$-edge-connected orientation of $G$. 
Then, there exist orientations 
$D_1, D_2, \dots  , D_\ell$ of $G$ such that $\ell \le |V|^3$, 
$D \rightarrow D_1 \rightarrow D_2 \rightarrow \dots \rightarrow D_\ell$, 
$\lambda(D_i) \ge k$ for $i \in \{1, \dots , \ell-1\}$, and $\lambda(D_\ell) \ge k+1$. 
Furthermore, such $D_1, \dots , D_\ell$ can be found in polynomial time. 
\end{theorem}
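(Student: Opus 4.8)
The plan is to reach a $(k+1)$-edge-connected orientation from $D$ by a sequence of single edge flips, each of which preserves $k$-edge-connectedness, and to bound the number of flips by $|V|^3$. Call the flip of an arc $e=(u,v)$ \emph{safe} for the current orientation $D$ if the result is again $k$-edge-connected. Flipping $e=(u,v)$ changes $\delta^+_D(X)$ by $-1$ if $e\in\Delta^+_D(X)$, by $+1$ if $e\in\Delta^-_D(X)$, and by $0$ otherwise, and dually for $\delta^-_D$; since $\delta^-_D(X)=\delta^+_D(V\setminus X)$, a direct computation shows that the flip of $e$ is safe if and only if no \emph{tight set} — a set $X$ with $\emptyset\ne X\subsetneq V$ and $\delta^+_D(X)=k$ — satisfies $e\in\Delta^+_D(X)$. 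I would also use the standard reformulation of edge-connectivity: fixing any vertex $r$, an orientation is $(k+1)$-edge-connected if and only if $\lambda_D(r,v)\ge k+1$ and $\lambda_D(v,r)\ge k+1$ for every $v\ne r$, where $\lambda_D(s,t)\defeq\min\{\delta^+_D(X): s\in X\subsetneq V,\ t\notin X\}$.

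The core of the argument is a progress step. If $D$ is $k$-edge-connected but not $(k+1)$-edge-connected, then some tight set $X$ exists, hence (according to whether $r\in X$) there is a vertex $t$ with $\lambda_D(t,r)=k$ or $\lambda_D(r,t)=k$; reversing all arcs if necessary — an operation preserving $k$-edge-connectedness and turning flips into flips — I may assume $\lambda_D(t,r)=k$, and I fix a tight set $X$ with $t\in X$, $r\notin X$. Because $G$ is $(2k+2)$-edge-connected, $\delta^-_D(X)=\delta_G(X)-\delta^+_D(X)\ge (2k+2)-k=k+2>0$, so arcs do enter $X$. The plan is to pick a directed path $P=p_0,p_1,\dots,p_s$ in $D$ with $p_0\notin X$, $p_s\in X$, and with the property that every tight set of $D$ containing $p_0$ contains all of $V(P)$, and then to reverse the arcs of $P$ one by one. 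After reversing the initial segment $p_0,\dots,p_j$, the orientation differs from $D$ on every cut exactly as if the single arc $(p_0,p_j)$ had been flipped (reversing a directed path changes each $\delta^+$ by at most one, in the direction forced by its two endpoints); the chosen property of $P$ guarantees that in this ``virtual flip'' no tight set of $D$ loses an out-arc, so every intermediate orientation remains $k$-edge-connected, while the final orientation has $\delta^+(X)$ raised to $k+1$, so $X$ is no longer tight and $\lambda_D(t,r)$ has increased. Establishing that such a path $P$ exists — one that leaves $V\setminus X$, enters $X$, and never exits the intersection of the tight sets through its start vertex — is the step that requires work: I would prove it by an uncrossing analysis of the family of tight sets, which is almost closed under intersection and union by submodularity of $\delta^+_D$ together with $k$-edge-connectedness (the exceptional case, where the union of two tight sets is all of $V$, is handled separately, and is where $\delta^-_D(X)\ge k+2$ is used again).

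For the count I would track the potential $\Phi(D)=|\{(s,t): s\ne t,\ \lambda_D(s,t)\ge k+1\}|$, which lies in $\{0,1,\dots,|V|(|V|-1)\}$ — a bound independent of $k$ — and, by the reformulation above, equals $|V|(|V|-1)$ exactly when $D$ is $(k+1)$-edge-connected. Each progress step uses at most $|V|-1$ flips (the length of $P$), and a suitable choice of $P$ (for instance, one that is shortest, so that it also avoids cutting pairs that are already well connected) should ensure that $\Phi$ does not decrease during the step and strictly increases over it; then there are at most $|V|(|V|-1)$ progress steps and fewer than $|V|^3$ flips in total. All subroutines (computing a tight set and the values $\lambda_D(\cdot,\cdot)$ by max-flow, and the uncrossing that produces $P$) run in polynomial time, giving the algorithmic statement. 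Finally, Theorem~\ref{thm:main01} follows by applying Theorem~\ref{thm:main02} once for each connectivity level $\lambda(D),\lambda(D)+1,\dots,k-1$, which is legitimate because a $2k$-edge-connected graph is $(2j+2)$-edge-connected for every $j\le k-1$, for a total of at most $k|V|^3$ flips.

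The main obstacle is the existence and efficient construction of the path $P$: it must simultaneously (i) reach from outside the chosen tight cut $X$ into $X$, so that its reversal actually raises $\delta^+(X)$; (ii) stay inside every tight set through its start, so that each of its at most $|V|$ single-arc reversals keeps the orientation $k$-edge-connected; and (iii) be chosen so that the net effect does not spoil the already-achieved connectivity of other pairs, keeping the potential monotone. Pinning down the structure of the tight-set family precisely enough to guarantee all three — in particular, ruling out configurations in which every arc entering $X$ ``escapes'' through some tight set around its tail — is where the real difficulty lies.
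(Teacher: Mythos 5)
Your high-level strategy matches the paper's: flip the arcs of a directed path one by one, observe that after reversing an initial segment the cut values change exactly as if a single ``virtual arc'' from the path's start to the current vertex had been flipped, protect the tight sets (sets with $\delta^+_D=k$) so that every intermediate orientation stays $k$-edge-connected, and bound the number of phases by a potential of size $O(|V|^2)$ with at most $|V|$ flips per phase. However, the two steps you yourself flag as ``the real difficulty'' are precisely the mathematical content of the theorem, and neither is established. First, the existence of a path $P$ from a vertex $p_0$ outside the chosen tight set $X$ into $X$ such that every tight set containing $p_0$ contains all of $V(P)$ is not proved; ``an uncrossing analysis'' is named but not carried out. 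In the paper this is the heart of the argument: one must pick an inclusionwise \emph{minimal} set $R$ that is in-tight and contains an out-tight set, prove that every minimal in-tight (resp.\ out-tight) set contains a \emph{safe source} (resp.\ \emph{safe sink}) --- a nontrivial counting argument that uses $(2k{+}2)$-edge-connectivity (Lemma~\ref{lem:107}) --- and then build the path in two phases $Q_1,Q_2$ with different avoidance properties (Lemmas~\ref{lem:105}, \ref{lem:105'}, \ref{lem:defP}). Your sketch has no analogue of the safe source/sink conditions, and without them the path you describe need not exist (the minimal tight set through a candidate start vertex may fail to meet $X$, or every arc entering $X$ may ``escape'' through a tight set around its tail, exactly the configurations the paper's machinery is designed to exclude).

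Second, your progress measure does not work as stated. Raising $\delta^+(X)$ to $k{+}1$ for one tight set $X$ does not by itself increase $\lambda_D(t,r)$, since other tight sets separating $t$ from $r$ may persist; and the potential $\Phi(D)=|\{(s,t):\lambda_D(s,t)\ge k+1\}|$ can \emph{decrease} in a step, because reversing the path turns every set $X'$ with $\delta^+_D(X')=k+1$, $p_0\in X'$, $p_s\notin X'$ into a new tight set, which can drop the local connectivity of pairs that were previously at $k{+}1$. The phrase ``a suitable choice of $P$ \dots should ensure that $\Phi$ does not decrease'' is exactly the point that needs proof. The paper handles this with a different potential, ${\sf val}(D)=\sum_{X\in\mathcal{F}_{\rm min}(D)}(|V|-|X|)$ over the (disjoint) minimal tight sets, and the safe source/sink conditions are engineered so that any newly tight set strictly contains $S$ or $T$ and hence cannot become minimal without increasing the size of the corresponding minimal member (Claims~\ref{clm:S=R} and~\ref{clm:Snot=R}); that is what makes the potential strictly decrease. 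So the proposal correctly frames the problem but leaves both the path-existence lemma and the monotonicity of the progress measure unproven, and the latter is not merely unproven but false for the potential you chose.
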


Note that the $(2k+2)$-edge-connectedness is necessary for an undirected graph $G$ to have a $(k+1)$-edge-connected orientation.

Theorem \ref{thm:main01} is then a simple corollary
of Theorem \ref{thm:main02} as exhibited in the
proof below.

\begin{proof}[Proof of Theorem \ref{thm:main01}]
If $\lambda(D) = k$, then the claim is obvious. 
Otherwise, let $p :=\lambda(D) < k$ and $D^p := D$. 
Since $G$ is $(2p+2)$-edge-connected, by applying Theorem~\ref{thm:main02} with $D^p$, 
we obtain orientations 
$D^p_1, D^p_2, \dots  , D^p_{\ell_p}$ of $G$ such that $\ell_p \le |V|^3$, 
$D^p \rightarrow D^p_1 \rightarrow D^p_2 \rightarrow \dots \rightarrow D^p_{\ell_p}$, 
$\lambda(D^p_i) \ge p$ for $i \in \{1, \dots , \ell_p-1\}$, and $\lambda(D^p_{\ell_p}) \ge p+1$. 
By taking a subsequence if necessary, we may assume that 
$\lambda(D^p_i) = p$ for $i \in \{1, \dots , \ell_p-1\}$. 
Note that, since $D^p_{\ell_p}$ is obtained from $D^p_{\ell_p-1}$ by flipping exactly one edge,
we obtain $\lambda(D^p_{\ell_p}) - \lambda(D^p_{\ell_p-1}) \le 1$, and hence 
$\lambda(D^p_{\ell_p}) = p+1$. 
Then, set $D^{p+1} := D^p_{\ell_p}$ and apply Theorem~\ref{thm:main02} again with $D^{p+1}$ to obtain a sequence 
$D^{p+1}_1, D^{p+1}_2, \dots  , D^{p+1}_{\ell_{p+1}} =: D^{p+2}$. 
We repeat this procedure until the edge-connectivity becomes $k$. 
Then, 
\[
D^{p}_1, D^{p}_2, \dots  , D^{p}_{\ell_{p}}, 
D^{p+1}_1, D^{p+1}_2, \dots  , D^{p+1}_{\ell_{p+1}}, 
\dots, 
D^{k-1}_1, D^{k-1}_2, \dots  , D^{k-1}_{\ell_{k-1}}
\]
is a desired sequence, because 
$\lambda(D^i_1) = \dots = \lambda(D^i_{\ell_i-1}) = i$ and $\lambda(D^i_{\ell_i}) = i+1$ for $i \in \{p, p+1, \dots , k-1\}$, and 
the length of the sequence is $\sum_{i=p}^{k-1} \ell_i \le k |V|^3$.
Such a sequence can be found in polynomial time by Theorem~\ref{thm:main02}. 
\end{proof}

In the rest of this section, we give a proof of Theorem~\ref{thm:main02}. 

\subsection{Proof Outline}
\label{sec:outline}

We first consider the case of $k=0$. 
Since $G$ is $2$-edge-connected, it has a strongly connected orientation $D'=(V, A')$ (see \cite{robbins}). 
Let $r \in V$ be an arbitrary vertex. 
Then, by considering the union of an in-tree an out-tree both rooted at $r$, one can see that
there exists a subgraph $D''=(V, A'')$ of $D'$ such that $D''$ is strongly connected and $|A''| \le 2|V|-2$. 
Let $F \subseteq A''$ be the set of arcs whose directions are different in $D''$ and $D$. 
Then, by flipping edges in $F$ one by one in an arbitrary order, 
we obtain a sequence of orientations satisfying the conditions in Theorem~\ref{thm:main02}, 
because we have no constraint on the intermediate orientations when $k=0$, and 
the length of this sequence is at most $2|V|-2$.

In what follows in this section, we suppose that $k$ is a positive integer. 
Let $G=(V, E)$ be an undirected $(2k+2)$-edge-connected graph and 
$D=(V,A)$ be a $k$-edge-connected orientation of $G$.  
Throughout this section, we fix a vertex $r \in V$ arbitrarily. To simplify the notation, for $v \in V$, $\{v\}$ is sometimes denoted by $v$ if no confusion may arise. Define $\mathcal{F}_{\rm out}(D)$ and  $\mathcal{F}_{\rm in}(D)$ as 
\begin{linenomath}
\begin{align*}
    \mathcal{F}_{\rm out}(D) &:= \{ X \subseteq V - r \mid \delta^+_D(X) = k \} \cup \{V\}, \\ 
    \mathcal{F}_{\rm in}(D) &:= \{ X \subseteq V - r \mid \delta^-_D(X) = k \} \cup \{V\}. 
\end{align*}
\end{linenomath}
Throughout this paper, a set in $\mathcal{F}_{\rm out}(D)$ (resp.~$\mathcal{F}_{\rm in}(D)$) is shown by a blue (resp.~red) oval in the figures. 
Note that, for a vertex set $X \subseteq V$ with $r \in X$, 
$\delta^+_D(X) = k$ if and only if $V - X \in \mathcal{F}_{\rm in}(D)$.  
With this observation, we see that 
$D$ is $(k+1)$-edge-connected if and only if 
$\mathcal{F}_{\rm out}(D) = \mathcal{F}_{\rm in}(D) = \{V\}$. 
We also note that $\mathcal{F}_{\rm out}(D) \cap \mathcal{F}_{\rm in}(D) = \{V\}$, 
because $\delta^+_D(X) + \delta^-_D(X) = \delta_G(X) \ge 2k+2$ for any non-empty subset $X \subseteq V-r$.
Define $\mathcal{F}_{\rm min}(D)$ as the set of all inclusionwise minimal sets in $\mathcal{F}_{\rm out}(D) \cup \mathcal{F}_{\rm in}(D)$. 
As we will see in Corollary~\ref{cor:Fmin_disjoint}, 
$\mathcal{F}_{\rm min}(D)$ consists of disjoint sets. 
If $D$ is clear from the context, $\mathcal{F}_{\rm out}(D), \mathcal{F}_{\rm in}(D)$, and $\mathcal{F}_{\rm min}(D)$ are simply denoted by 
$\mathcal{F}_{\rm out}, \mathcal{F}_{\rm in}$, and $\mathcal{F}_{\rm min}$, respectively. 

In our proof of Theorem~\ref{thm:main02}, 
by flipping some edges in $D$,
we decrease the value of 
\[
{\sf val}(D) := \sum_{X \in \mathcal{F}_{\rm min}(D)} (|V| - |X|).
\]
We repeat this procedure as long as ${\sf val}(D)$ is positive.  
If this value becomes $0$, then $\mathcal{F}_{\rm min} = \{V\}$. 
This means that $\mathcal{F}_{\rm out} = \mathcal{F}_{\rm in} = \{V\}$, 
and hence $D$ is $(k+1)$-edge-connected. 
Note that we decrease the value of ${\sf val}(D)$ at most $|V|^2$ times, because
${\sf val}(D)$ is integral and ${\sf val}(D) \le |V|^2$. 
Therefore, to prove Theorem~\ref{thm:main02}, 
it suffices to show the following proposition. 

\begin{proposition}
\label{prop:dec_val}
Suppose that $\mathcal{F}_{\rm min} \not= \{V\}$. 
Then, there exist orientations 
$D_1, D_2, \dots  , D_\ell$ of $G$ such that $\ell \le |V|$, 
$D \rightarrow D_1 \rightarrow D_2 \rightarrow \dots \rightarrow D_\ell$, 
$\lambda(D_i) \ge k$ for $i \in \{1, \dots , \ell\}$, and 
${\sf val}(D_\ell) < {\sf val}(D)$. 
Furthermore, such $D_1, \dots , D_\ell$ can be found in polynomial time. 
\end{proposition}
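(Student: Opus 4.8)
The plan is to single out one inclusionwise‑minimal tight set and push it outward by a carefully ordered run of single‑arc flips. Since reversing every arc of $D$ swaps $\mathcal{F}_{\rm out}$ and $\mathcal{F}_{\rm in}$ and preserves both $\lambda$ and ${\sf val}$, I may assume some $X\in\mathcal{F}_{\rm min}$ is out‑tight, $\delta^+_D(X)=k$; then $X\subseteq V-r$ and $\delta^-_D(X)=\delta_G(X)-\delta^+_D(X)\ge(2k+2)-k=k+2$. The basic mechanism is that reversing a directed $(s,t)$‑path changes $\delta^+(S)$ by $+1$ when $s\notin S\ni t$, by $-1$ when $s\in S\not\ni t$, and by $0$ otherwise (and $\delta^-$ oppositely), and the same holds for any prefix of the path flipped arc by arc, with $s$ and the current endpoint playing the roles of $s,t$. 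Combined with submodularity of $\delta^\pm$ and the minimality of $X$ (so $\delta^+_D(Y),\delta^-_D(Y)\ge k+1$ for every $\varnothing\ne Y\subsetneq X$), the standard uncrossing computations yield the two facts I rely on: every out‑tight set meeting $X$ contains $X$, and every in‑tight set containing a vertex of $X$ contains $X$. Since no tight set other than $V$ contains $r$, a path starting at $r$ can never hurt $\delta^+$ of an out‑tight set and can hurt $\delta^-$ of an in‑tight set $S$ only when $X\subseteq S$. Finally, to prove the proposition it suffices to produce, by such flips with all intermediate orientations $k$‑edge‑connected, an orientation $D'$ in which some $Z\in\mathcal{F}_{\rm min}(D)$ has stopped being tight while every other member of $\mathcal{F}_{\rm min}(D)$ is untouched: the untouched ones stay tight and minimal in $D'$, and by Corollary~\ref{cor:Fmin_disjoint} applied to $D'$ at most one new minimal set can occupy the region freed by $Z$, so if that new set strictly contains $Z$ then ${\sf val}(D')\le{\sf val}(D)-1$.

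To build such a sequence, let $W$ be the set of vertices of $D$ lying in no in‑tight set; then $r\in W$, and by the structural facts each $Y\in\mathcal{F}_{\rm min}$ is either contained in $W$ or disjoint from it. Suppose first that $X\subseteq W$ and that $r$ reaches $X$ inside $D[W]$. I would flip, one arc at a time in path order, a shortest directed path $r=v_0,v_1,\dots,v_m=t$ inside $D[W]$ that ends at a suitably chosen $t\in X$. The prefix‑flip formula shows every intermediate orientation stays $k$‑edge‑connected precisely because each $v_j$ avoids all in‑tight sets, and after the last flip $\delta^+(X)=k+1$ and $\delta^-(X)\ge k+1$, so $X$ has left $\mathcal{F}_{\rm out}\cup\mathcal{F}_{\rm in}$. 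Every other minimal set, being disjoint from $X$ and avoiding $r$, is unchanged and stays minimal; every newly tight set of $D'$ is in‑tight and contains $t$, so at most one new minimal set $A$ arises, and with the right choice of $t$ it strictly contains $X$. The path has at most $|V|-1$ arcs, so $\ell\le|V|$; computing the in‑tight sets by max‑flow and the path by breadth‑first search makes this polynomial.

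If instead $r$ reaches no minimal out‑tight set through $W$, I would dualize. Letting $R$ be the set reachable from $r$ in $D[W]$, one checks that $R$ contains no tight set and that every arc leaving $R$ enters an in‑tight set; combining this with $k$‑edge‑connectivity (every cut has at least $k$ arcs in each direction) and the disjointness of the maximal in‑tight sets should pin down a minimal in‑tight set $M\in\mathcal{F}_{\rm min}$ all of whose vertices lie in no out‑tight set and from which $r$ is reachable in the subgraph induced by the out‑tight‑free vertices. After reversing all arcs this is exactly the previous case applied to $M$, so flipping a shortest $M$‑to‑$r$ path arc by arc makes $M$ non‑tight while keeping $k$‑edge‑connectivity, again in at most $|V|-1$ flips. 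Either way we obtain the $D_1,\dots,D_\ell$ required by Proposition~\ref{prop:dec_val}.

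The hard parts will be exactly the two places where I wrote ``suitably'' and ``should pin down''. First, the dichotomy: proving that whenever $r$ reaches no minimal out‑tight set through in‑tight‑free vertices, there genuinely is a minimal in‑tight set reverse‑reachable to $r$ through out‑tight‑free vertices — this needs the reachability‑closure argument above carried out carefully, using $k$‑edge‑connectivity to forbid cuts that are too small and making sure the located set really lies in $\mathcal{F}_{\rm min}$. Second, the endpoint selection: showing that one can always choose the target $t\in X$ (resp.\ $t\in M$) so that the flip creates no new minimal tight set of size $\le|X|$ — equivalently, so that $t$ avoids all proper subsets $B\subsetneq X$ with $\delta^-_D(B)=k+1$ — which is what forces ${\sf val}$ to \emph{strictly} decrease rather than merely not increase, and which I expect to be the most delicate point. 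The remaining ingredients — the bound $\ell\le|V|$, hence at most $|V|^3$ flips over all of Theorem~\ref{thm:main02}, and polynomial running time — are routine.
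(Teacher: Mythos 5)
Your basic mechanism (reverse a directed path arc by arc so that one minimal tight set gains slack, then argue via disjointness that at most one new minimal tight set can appear and must be larger) is the same as the paper's, but the two places you flag as ``hard parts'' are not finishing touches --- they are the substance, and the first one is a genuine gap. Your case analysis is not exhaustive: Case~1 needs a minimal out-tight set $X$ \emph{all of whose vertices avoid every in-tight set} together with a directed $r$--$X$ path inside $D[W]$, and your dual case needs the mirror image. Neither need exist. A minimal out-tight set $T$ can be nested inside an in-tight set $R$ (and simultaneously every minimal in-tight set can be nested inside an out-tight set); then $W$ meets no minimal out-tight set and the out-tight-free vertex set meets no minimal in-tight set, so neither of your cases applies, and your ``one checks \dots should pin down'' step has nothing to pin down. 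This nested configuration is exactly what the paper's machinery is built for: it chooses an inclusionwise \emph{minimal} mixed set $R$ (in-tight containing an out-tight set, or vice versa), takes a path inside $R$ from a safe source in a minimal in-tight $S\subseteq R$ to a safe sink in a minimal out-tight $T\subsetneq R$ with the two-part structure of Lemma~\ref{lem:defP} ($Q_1$ avoids the out-tight sets inside $R$ as vertex sets, $Q_2$ never uses an arc leaving an out-tight set), and flips from the $t$-end; intermediate $k$-edge-connectivity then comes from the minimality of $R$ (Lemma~\ref{lem:Di_is_kconnected}), not from the path avoiding in-tight vertices. By insisting the path start at $r$ and stay in tight-free territory you make the connectivity of intermediate orientations trivial, but you push all of the difficulty into the existence of such a path, which in general is simply not available; note in particular that the paper's Case $S=R$ in Lemma~\ref{lem:decrease_val} treats precisely a situation your dichotomy excludes.

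The second flagged point is also a real omission, though a more local one. Your reduction of the endpoint condition to ``$t$ avoids all proper subsets $B\subsetneq X$ with $\delta^-_D(B)=k+1$'' is only correct after an uncrossing step (if $B$ straddles $X$, then by Lemma~\ref{lem:101} and the minimality of $X$ the set $B-X$ is in-tight, contains neither $r$ nor $t$, hence survives the flip and witnesses that $B$ is not minimal in $D_\ell$), and even then the \emph{existence} of such a $t$ is exactly the content of the paper's safe-sink lemma (Lemma~\ref{lem:107}): one must show that the maximal subsets of $X$ with in-degree $k+1$ are disjoint and cannot cover $X$, which is a counting argument you have not supplied. In the general, nested situation the paper moreover needs the full safe source/safe sink conditions at \emph{both} ends of the path (your start vertex $r$ sidesteps the source side only because of the unproven reachability assumption above). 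So while your Case~1, suitably completed, would be a correct special case, the proposal as it stands does not prove Proposition~\ref{prop:dec_val}.
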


In what follows in this section, we assume that $\mathcal{F}_{\rm min} \not= \{V\}$ 
and give an algorithm for finding such orientations as in Proposition~\ref{prop:dec_val}. 
In our algorithm, 
we find an inclusionwise minimal set $S$ in $\mathcal{F}_{\rm in}$, an inclusionwise minimal set $T$ in $\mathcal{F}_{\rm out}$, and a path $P$ from $S$ to $T$. 
Then, we flip the edges in $P$ one by one from one end to the other.
In order to obtain orientations with the conditions in Proposition~\ref{prop:dec_val}, 
we have to choose $P$ carefully. 
First, it is necessary that $P$ does not enter a set in $\mathcal{F}_{\rm in}$~(or does not leave a set in $\mathcal{F}_{\rm out}$), as otherwise flipping edges violates $k$-edge-connectivity.
Moreover, to decrease ${\sf val}(D)$, we choose a path $P$ so that it is from a {\it safe source} in $S$ to a {\it safe sink} in $T$~(see Section~\ref{sec:safevertex} for definitions).

We note that ${\sf val}(D)$, safe sources, and safe sinks are first introduced in this paper, 
and they are key ingredients in our arguments. 

After describing basic properties of $\mathcal{F}_{\rm out}, \mathcal{F}_{\rm in}$, and $\mathcal{F}_{\rm min}$ in Section~\ref{sec:basic}, 
we introduce safe sources and safe sinks in Section \ref{sec:safevertex}. 
Then, we describe our algorithm in Section~\ref{sec:mainproc}, and 
prove its validity in Section~\ref{sec:validity}. 
A proof of a key lemma in our algorithm is shown in Section~\ref{sec:defP}.

\subsection{Basic Properties}
\label{sec:basic}

In this subsection, we show some basic properties of $\mathcal{F}_{\rm out}, \mathcal{F}_{\rm in}$, and $\mathcal{F}_{\rm min}$.

\begin{lemma}
\label{lem:102}
For $X, Y \subseteq V$ with $X\cap Y \not= \emptyset$, 
we have the following. 
\begin{enumerate}
    \item If $X, Y \in \mathcal{F}_{\rm out}$, then $X \cap Y, X \cup Y \in \mathcal{F}_{\rm out}$. 
    \item If $X, Y \in \mathcal{F}_{\rm in}$, then $X \cap Y, X \cup Y \in \mathcal{F}_{\rm in}$.
\end{enumerate}
\end{lemma}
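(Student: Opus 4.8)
The plan is to prove both parts by a direct application of the submodularity inequalities for $\delta^+_D$ and $\delta^-_D$ recorded in Section~\ref{sec:preliminaries}, together with the hypothesis that $D$ is $k$-edge-connected. By symmetry (replacing $\delta^+_D$ by $\delta^-_D$ throughout), it suffices to treat part~1 in detail and remark that part~2 follows verbatim.

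First I would dispose of the degenerate situation where one of $X,Y$ equals $V$: say $Y=V$. Then $X\cap Y = X \in \mathcal{F}_{\rm out}$ and $X\cup Y = V \in \mathcal{F}_{\rm out}$, so there is nothing to prove. Hence I may assume $X,Y \in \mathcal{F}_{\rm out}\setminus\{V\}$, which by definition means $X,Y \subseteq V-r$ and $\delta^+_D(X)=\delta^+_D(Y)=k$. Note $X\cap Y\neq\emptyset$ by hypothesis, and $\emptyset\neq X\cap Y \subseteq X\cup Y \subseteq V-r \subsetneq V$, so both $X\cap Y$ and $X\cup Y$ are non-empty proper subsets of $V$ avoiding $r$.

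The core step is the chain
\[
2k \;=\; \delta^+_D(X) + \delta^+_D(Y) \;\geq\; \delta^+_D(X\cap Y) + \delta^+_D(X\cup Y) \;\geq\; k + k \;=\; 2k,
\]
where the first inequality is submodularity and the second uses that $D$ is $k$-edge-connected (so $\delta^+_D(Z)\geq k$ for every non-empty $Z\subsetneq V$), applied to $Z=X\cap Y$ and $Z=X\cup Y$. Since the two ends of the chain coincide, every inequality is an equality, forcing $\delta^+_D(X\cap Y) = \delta^+_D(X\cup Y) = k$. As $X\cap Y, X\cup Y \subseteq V-r$, this gives $X\cap Y, X\cup Y \in \mathcal{F}_{\rm out}$, as required. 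Part~2 is obtained by the identical argument with $\delta^-_D$ in place of $\delta^+_D$ and $\mathcal{F}_{\rm in}$ in place of $\mathcal{F}_{\rm out}$.

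There is no real obstacle here: the only points needing care are the bookkeeping around the special element $V$ of $\mathcal{F}_{\rm out}$ (handled up front) and checking that $X\cup Y$ is still a \emph{proper} non-empty subset of $V$ not containing $r$, which is what lets us invoke $k$-edge-connectedness and conclude membership in $\mathcal{F}_{\rm out}$ rather than landing on $V$.
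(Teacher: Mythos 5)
Your proof is correct and follows essentially the same route as the paper's: dispose of the case $X=V$ or $Y=V$, then combine submodularity of $\delta^+_D$ with $k$-edge-connectedness to force $\delta^+_D(X\cap Y)=\delta^+_D(X\cup Y)=k$, noting $r\notin X\cup Y$ so neither set is $V$. The symmetric argument for $\mathcal{F}_{\rm in}$ matches the paper as well.
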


\begin{proof}
If $X = V$ or $Y=V$, then the claim is obvious. Otherwise, $X, Y \subseteq V-r$. 
Since $D$ is $k$-edge-connected, if $X, Y \in \mathcal{F}_{\rm out}$, then
\[
2 k = \delta^+_D(X) + \delta^+_D(Y) \ge \delta^+_D(X \cap Y) + \delta^+_D(X \cup Y) \ge 2k 
\]
by the submodularity of $\delta^+_D$. Here, we note that $X \cup Y \not= V$ since $r \not\in X \cup Y$. 
Therefore, 
$\delta^+_D(X \cap Y) = \delta^+_D(X \cup Y) = k$, which means that  $X \cap Y, X \cup Y \in \mathcal{F}_{\rm out}$. 
The same argument can be applied to $\mathcal{F}_{\rm in}$.  
\end{proof}

\begin{lemma}
\label{lem:101}
For $X, Y \subseteq V$, it holds that
$\delta^+_D(X) + \delta^-_D(Y) \ge \delta^+_D(X - Y) + \delta^-_D(Y - X)$.
\end{lemma}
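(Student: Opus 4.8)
Lemma~\ref{lem:101} asserts that $\delta^+_D(X) + \delta^-_D(Y) \ge \delta^+_D(X - Y) + \delta^-_D(Y - X)$ for all $X, Y \subseteq V$. I would prove this directly by counting the contribution of each edge to both sides, which is the standard technique for establishing ``crossing'' submodular-type inequalities.

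The plan is as follows. Fix an arbitrary directed edge $e = (a,b) \in A$. I would partition the vertex set into the four cells determined by membership in $X$ and in $Y$: namely $X \cap Y$, $X - Y$, $Y - X$, and $V - (X \cup Y)$. Each of $a$ and $b$ lies in exactly one of these four cells, so there are $16$ cases for the pair (cell of $a$, cell of $b$). For each case I would compute the indicator contribution of $e$ to the left-hand side, which is $[a \in X, b \notin X] + [a \notin Y, b \in Y]$, and to the right-hand side, which is $[a \in X - Y, b \notin X - Y] + [a \notin Y - X, b \in Y - X]$. The claim then reduces to verifying, in each of the $16$ cases, that the right-hand contribution is at most the left-hand contribution. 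Summing over all edges $e \in A$ yields the inequality.

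Rather than grinding through all $16$ cases explicitly, a cleaner route is to observe that $\Delta^+_D(X - Y) \subseteq \Delta^+_D(X) \cup \Delta^-_D(Y)$ and $\Delta^-_D(Y - X) \subseteq \Delta^+_D(X) \cup \Delta^-_D(Y)$, and that these two subsets are disjoint. Indeed, an edge leaving $X - Y$ has its tail in $X - Y \subseteq X$; if its head is outside $X$ it leaves $X$, and if its head is in $X$ then (since the head is outside $X - Y$) the head is in $X \cap Y \subseteq Y$ while the tail is in $X - Y$, i.e. outside $Y$, so the edge enters $Y$. A symmetric argument handles $\Delta^-_D(Y - X)$. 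For disjointness: an edge in $\Delta^+_D(X-Y)$ has tail in $X - Y$, hence tail not in $Y$, so it is not in $\Delta^-_D(Y-X)$ (whose edges have head in $Y$, tail not in $Y$) unless its head is in $Y$ and tail not in $Y$ — but we must also exclude it from being counted as entering $Y$ via the head while simultaneously leaving $X - Y$; a short check shows the tail-in-$X-Y$ condition forces these to be distinct edge sets when we also track the $X$-side. Taking cardinalities gives $\delta^+_D(X - Y) + \delta^-_D(Y - X) = |\Delta^+_D(X-Y) \cup \Delta^-_D(Y-X)| \le |\Delta^+_D(X) \cup \Delta^-_D(Y)| \le \delta^+_D(X) + \delta^-_D(Y)$.

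The only mildly delicate point — and the one I would state carefully — is the disjointness of $\Delta^+_D(X-Y)$ and $\Delta^-_D(Y-X)$ as subsets of $A$, since the final sumless bound ``$|U| + |W| \le |\text{ambient set}|$'' needs $U$ and $W$ disjoint inside that ambient set (here the ambient set is itself bounded by $\delta^+_D(X) + \delta^-_D(Y)$, which is automatic from the union bound). An edge $e$ in $\Delta^+_D(X-Y) \cap \Delta^-_D(Y-X)$ would have its tail in $X - Y$ and its head in $Y - X$; but then the tail is in $X$ and the head is not in $X$, so $e \in \Delta^+_D(X)$, and also the tail is not in $Y$ while the head is in $Y$, so $e \in \Delta^-_D(Y)$ — this is consistent, so in fact $\Delta^+_D(X-Y)$ and $\Delta^-_D(Y-X)$ need not be disjoint. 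The correct accounting is therefore the per-edge version: I would fall back to the $16$-case check (or a compressed version of it), showing that no single edge contributes more to the right side than to the left. Concretely, if $e$ contributes $2$ to the right side then $a \in X - Y$ and $b \in Y - X$, giving $a \in X$, $b \notin X$, $a \notin Y$, $b \in Y$, hence $e$ contributes $2$ to the left side as well; if $e$ contributes $1$ to the right side, one checks it contributes at least $1$ to the left side in each sub-case; and if $e$ contributes $0$ to the right side there is nothing to prove. I expect this per-edge verification to be the main (though entirely routine) obstacle, and I would present it as a compact case analysis on which of the four cells contain the tail and head of $e$.
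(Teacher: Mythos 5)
Your final argument (the per-edge case analysis) is correct: for each edge $(a,b)$ the right-hand contribution $[a \in X-Y,\, b \notin X-Y] + [a \notin Y-X,\, b \in Y-X]$ never exceeds the left-hand contribution $[a \in X,\, b \notin X] + [a \notin Y,\, b \in Y]$, and the one-contribution subcases you leave to "one checks" do go through (e.g.\ if $a \in X-Y$ and $b \in X \cap Y$ the edge enters $Y$; if $b \in Y-X$ and $a \in X \cap Y$ the edge leaves $X$). You were also right to abandon the union-bound detour, since the two sets on the right need not be disjoint — though note that an edge lying in both is also counted twice on the left, which is exactly what the per-edge accounting captures. The paper takes a different and shorter route: it uses the identity $\delta^-_D(S) = \delta^+_D(V-S)$ to rewrite $\delta^-_D(Y) = \delta^+_D(V-Y)$, applies the submodularity inequality for $\delta^+_D$ (already stated in the preliminaries) to the pair $X$ and $V-Y$, and observes that $X \cap (V-Y) = X - Y$ while $\delta^+_D(X \cup (V-Y)) = \delta^-_D(Y-X)$. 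That reduction buys a two-line proof with no case analysis, at the cost of invoking submodularity as a black box; your counting argument is more self-contained but essentially re-proves that black box in the special configuration at hand. Either way the lemma stands, so there is no gap — only a missed opportunity to reuse the inequality the paper had already put on the table.
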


\begin{proof}
Since $\delta^-_D(S) = \delta^+_D(V-S)$ for any $S \subseteq V$, we obtain 
\begin{linenomath}
\begin{align*}
    \delta^+_D(X) + \delta^-_D(Y) &= \delta^+_D(X) + \delta^+_D(V-Y) \\
                                  &\ge \delta^+_D( X \cap (V- Y) ) + \delta^+_D(X \cup (V - Y))  \\
                                  &= \delta^+_D( X - Y) + \delta^-_D(Y - X)
\end{align*}
\end{linenomath}
by the submodularity of $\delta^+_D$. 
\end{proof}

\begin{lemma}
\label{lem:103}
Suppose that $X \in \mathcal{F}_{\rm out}$, $Y \in \mathcal{F}_{\rm in}$, 
$X - Y \not= \emptyset$, and $Y - X \not= \emptyset$. 
Then, $X-Y \in \mathcal{F}_{\rm out}$ and $Y-X \in \mathcal{F}_{\rm in}$. 
\end{lemma}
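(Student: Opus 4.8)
The plan is to reduce immediately to the case where both $X$ and $Y$ are nonempty subsets of $V-r$, then apply Lemma~\ref{lem:101} together with the $k$-edge-connectivity of $D$. First I would dispose of the degenerate possibilities. If $X=V$, then $X-Y = V-Y$ while $Y-X = Y-V = \emptyset$, contradicting the hypothesis $Y-X\neq\emptyset$; hence $X\neq V$, so $X\subseteq V-r$. Symmetrically, if $Y=V$, then $X-Y=\emptyset$, contradicting $X-Y\neq\emptyset$; hence $Y\subseteq V-r$. Thus $X,Y\subseteq V-r$, and by definition of the families we have $\delta^+_D(X)=k$ and $\delta^-_D(Y)=k$.

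Next I would invoke Lemma~\ref{lem:101} with these $X$ and $Y$ to get
\[
2k \;=\; \delta^+_D(X) + \delta^-_D(Y) \;\ge\; \delta^+_D(X-Y) + \delta^-_D(Y-X).
\]
Now $X-Y$ is a nonempty subset of $V-r$ (in particular $X-Y\neq\emptyset$ by hypothesis and $X-Y\subseteq X\subseteq V-r\subsetneq V$), so $k$-edge-connectivity of $D$ gives $\delta^+_D(X-Y)\ge k$; likewise $Y-X$ is a nonempty subset of $V-r$, so $\delta^-_D(Y-X)\ge k$. Combining these two lower bounds with the displayed inequality forces $\delta^+_D(X-Y)=k$ and $\delta^-_D(Y-X)=k$. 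Since $X-Y$ and $Y-X$ are nonempty subsets of $V-r$, this means exactly $X-Y\in\mathcal{F}_{\rm out}$ and $Y-X\in\mathcal{F}_{\rm in}$, as desired.

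I do not expect a genuine obstacle here: the statement is a standard "crossing $\mathcal{F}_{\rm out}$ and $\mathcal{F}_{\rm in}$ sets can be uncrossed by difference" argument, and the only thing requiring care is the bookkeeping around the element $r$ and the special set $V$ — namely checking that $X,Y\neq V$ and that $X-Y,\,Y-X$ stay inside $V-r$ so that the relevant $\delta^\pm$ values are pinned down to $k$ rather than merely bounded. The substance is the submodularity inequality already packaged as Lemma~\ref{lem:101}, so the proof should be only a few lines.
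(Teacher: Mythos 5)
Your proposal is correct and follows essentially the same route as the paper: rule out $X=V$ and $Y=V$ using the nonemptiness hypotheses so that $X,Y\subseteq V-r$, apply Lemma~\ref{lem:101} to get $2k\ge \delta^+_D(X-Y)+\delta^-_D(Y-X)$, and use $k$-edge-connectivity to pin both terms down to exactly $k$. The only difference is that you spell out the degenerate-case bookkeeping slightly more explicitly than the paper does.
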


\begin{proof}
Since $X - Y \not= \emptyset$ and $Y - X \not= \emptyset$, 
we have $X \not= V$ and $Y \not= V$, and hence $X, Y \subseteq V -r$. 
Since $D$ is $k$-edge-connected, we obtain
\[
2 k = \delta^+_D(X) + \delta^-_D(Y) \ge \delta^+_D(X - Y) + \delta^-_D(Y - X) \ge 2k 
\]
by Lemma~\ref{lem:101}. 
Therefore, 
$\delta^+_D(X - Y) = \delta^-_D(Y - X) = k$, which means that $X-Y \in \mathcal{F}_{\rm out}$ and $Y-X \in \mathcal{F}_{\rm in}$. 
\end{proof}

By these lemmas, we obtain the following corollary. 
Recall that $\mathcal{F}_{\rm min}$ is the family of all inclusionwise minimal sets in $\mathcal{F}_{\rm in} \cup \mathcal{F}_{\rm out}$.

\begin{corollary}
\label{cor:Fmin_disjoint}
$\mathcal{F}_{\rm min}$ consists of disjoint sets. 
\end{corollary}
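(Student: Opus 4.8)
The goal is to show that any two sets in $\mathcal{F}_{\rm min}$ are disjoint. Let $A, B \in \mathcal{F}_{\rm min}$ with $A \neq B$, and suppose for contradiction that $A \cap B \neq \emptyset$. Since every member of $\mathcal{F}_{\rm out} \cup \mathcal{F}_{\rm in}$ other than $V$ is a subset of $V - r$, and since $A, B$ are inclusionwise minimal and distinct (hence neither equals $V$, assuming $|V| \geq 2$), we have $A, B \subseteq V - r$. I would then split into cases according to whether $A, B$ come from $\mathcal{F}_{\rm out}$ or $\mathcal{F}_{\rm in}$.

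\textbf{Case 1: $A, B$ on the same side.} Say $A, B \in \mathcal{F}_{\rm out}$ (the case $A, B \in \mathcal{F}_{\rm in}$ is symmetric). By Lemma~\ref{lem:102}(1), $A \cap B \in \mathcal{F}_{\rm out}$. But $A \cap B$ is a nonempty subset of $A$, so by minimality of $A$ within $\mathcal{F}_{\rm out} \cup \mathcal{F}_{\rm in}$ we get $A \cap B = A$, i.e., $A \subseteq B$; symmetrically $B \subseteq A$, so $A = B$, a contradiction.

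\textbf{Case 2: $A \in \mathcal{F}_{\rm out}$, $B \in \mathcal{F}_{\rm in}$ (or vice versa).} First, if $A \subseteq B$ then minimality of $B$ forces $A = B$ — but that is impossible since $\mathcal{F}_{\rm out} \cap \mathcal{F}_{\rm in} = \{V\}$ and $A \neq V$. Likewise $B \subseteq A$ is impossible. Hence $A - B \neq \emptyset$ and $B - A \neq \emptyset$, and with $A \cap B \neq \emptyset$ we are in the hypothesis of Lemma~\ref{lem:103}, which gives $A - B \in \mathcal{F}_{\rm out}$. Now $A - B$ is a nonempty proper subset of $A$ (proper since $A \cap B \neq \emptyset$), contradicting the minimality of $A$. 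This completes all cases, so $\mathcal{F}_{\rm min}$ consists of pairwise disjoint sets.

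\textbf{Main obstacle.} There is no real obstacle here — the corollary is a routine consequence of the three preceding lemmas. The only thing to be a little careful about is the bookkeeping: making sure the "proper subset" claims are genuinely proper (this uses $A \cap B \neq \emptyset$ in Case 1 giving $A \cap B$ possibly equal to $A$, and in Case 2 the set $A - B$ being strictly smaller than $A$), and handling the degenerate possibility $A = V$ or $B = V$, which is excluded because $V$ is never inclusionwise minimal unless $\mathcal{F}_{\rm min} = \{V\}$, a case we may exclude or which trivially satisfies disjointness.
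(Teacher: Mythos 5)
Your proof is correct and follows essentially the same route as the paper: apply Lemma~\ref{lem:102} when the two sets lie on the same side and Lemma~\ref{lem:103} when they lie on opposite sides, then contradict inclusionwise minimality. The paper's argument is just a terser version of yours; your extra bookkeeping (ruling out containments and the $V$ case) only spells out details the paper leaves implicit.
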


\begin{proof}
Assume to the contrary that $\mathcal{F}_{\rm min}$ contains two distinct sets $X$ and $Y$ with $X \cap Y \not= \emptyset$. 
Then, Lemmas~\ref{lem:102} and~\ref{lem:103} show that $X \cap Y$ or $X-Y$ is in $\mathcal{F}_{\rm in} \cup \mathcal{F}_{\rm out}$, 
which contradicts the minimality of $X$. 
\end{proof}

We note that $\mathcal{F}_{\rm min}$ can be computed in polynomial time, 
because each inclusionwise minimal element of $\mathcal{F}_{\rm out}$ and $\mathcal{F}_{\rm in}$ 
can be computed by using a minimum cut algorithm.

\subsection{Safe Source and Safe Sink}
\label{sec:safevertex}

As described in Section~\ref{sec:outline}, 
we choose a path $P$ from a safe source to a safe sink in our algorithm. 
In this subsection, we introduce safe sources and safe sinks.

Let $S$ be an inclusionwise minimal vertex set in $\mathcal{F}_{\rm in}$ (or~$\mathcal{F}_{\rm out}$, respectively). 
A vertex $s \in S$ is called a {\em safe source in $S$} (resp.~a {\em safe sink in $S$}) if, for any $X \subseteq V - r$ with $s \in X$ and $S - X \not= \emptyset$, 
\begin{enumerate}
    \item $\delta^+_D (X) \ge k+1$ (resp.~$\delta^-_D (X) \ge k+1$) holds, and 
    \item if $\delta^+_D (X) = k+1$ (resp.~$\delta^-_D (X) = k+1$), then there exists a vertex set $X' \subseteq X -s$ with $X' \in \mathcal{F}_{\rm out}$ (resp.~$X' \in \mathcal{F}_{\rm in}$); see Figure~\ref{fig:03}.   
\end{enumerate}

\begin{figure}[t]
\centering
\includegraphics[width=5cm]{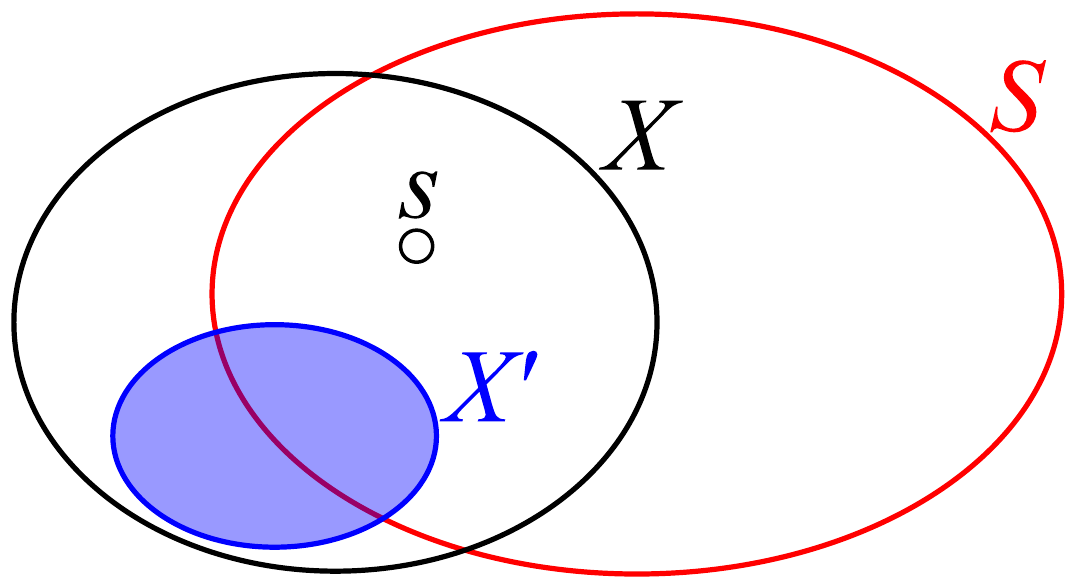}
\caption{The second condition.}
\label{fig:03}
\end{figure}

Here is an intuition of the definition.
In our algorithm~(see Section~\ref{sec:mainproc}),
we will find a path from a safe source $s$ in $S$ to a safe sink $t$ in $T$, and flip the edges of the path one by one from $t$ to $s$.
By the edge flips, $S$ and $T$ are removed from $\mathcal{F}_{\rm min}$, but new sets may be added to $\mathcal{F}_{\rm min}$. 
The definition of safety guarantees that a set $X$ newly becomes a member of $\mathcal{F}_{\rm min}$ only if $X\supseteq S$ or $X\supseteq T$.
For example, if a set $X$ with $s\in X$ and $t\not\in X$ satisfies $\delta^+_D (X)=k+1$, then $X$ may newly become a member of $\mathcal{F}_{\rm min}$.
However, the definition of a safe sink guarantees that $X$ has a proper subset $X'\in \mathcal{F}_{\rm out}$ contained in $X-s$, which implies that $X$ cannot become inclusionwise minimal after edge flips.
A similar argument holds for a safe sink. 
Therefore, a set $X$ newly becomes a member of $\mathcal{F}_{\rm min}$ only if $X\in \mathcal{F}_{\rm out}\cup \mathcal{F}_{\rm in}$.
This shows that ${\sf val}(D)$ decreases by at least one.
See the proof of Lemma~\ref{lem:decrease_val} for the details.

As we will see in Lemma~\ref{lem:107}, a safe source (resp.~a safe sink) always exists
in any inclusionwise minimal vertex set in $\mathcal{F}_{\rm in}$ (resp.~$\mathcal{F}_{\rm out}$).

\subsection{Our Algorithm}
\label{sec:mainproc}

In this subsection, we describe our algorithm for finding orientations $D_1, \dots , D_\ell$ with the conditions in Proposition~\ref{prop:dec_val}. 

Let $R \subseteq V$ be an inclusionwise minimal vertex set satisfying either 
\begin{description}
    \item[(a)] $R \in \mathcal{F}_{\rm in}$ and there exists a vertex set $X \subsetneq R$ with $X \in \mathcal{F}_{\rm out}$, or
    \item[(b)] $R \in \mathcal{F}_{\rm out}$ and there exists a vertex set $X \subsetneq R$ with $X \in \mathcal{F}_{\rm in}$.  
\end{description}
Note that such a vertex set $R$ always exists, 
since $\mathcal{F}_{\rm min}\neq \{V\}$ implies that $R = V$ satisfies (a) or (b). 
Furthermore, for each inclusionwise minimal set $X$ in $\mathcal{F}_{\rm out}$ (resp.~$\mathcal{F}_{\rm in}$), 
we can compute the unique minimal set $R'$ satisfying $R' \supsetneq X$ and $R' \in \mathcal{F}_{\rm in}$ (resp.~$R' \in \mathcal{F}_{\rm out}$) 
by a minimum cut algorithm, which shows that 
$R$ can be found in polynomial time. 
We also note that $X \subsetneq R$ in the conditions can be replaced with $X \subseteq R$ unless $R=V$, because $\mathcal{F}_{\rm in} \cap \mathcal{F}_{\rm out} = \{V\}$. 
By symmetry, we may assume that $R$ satisfies (a), i.e., 
$R \in \mathcal{F}_{\rm in}$ and there exists a vertex set $X \subsetneq R$ with $X \in \mathcal{F}_{\rm out}$. 
Define $\mathcal{F}^R_{\rm out}$ as 
\[
    \mathcal{F}^R_{\rm out} := \{ X \in \mathcal{F}_{\rm out} \mid X \subsetneq R \},  
\]
which is nonempty by the choice of $R$. 

We use the following key lemma, whose proof is given in Section~\ref{sec:defP}. 

\begin{lemma}\label{lem:defP}
Let $R \subseteq V$ be an inclusionwise minimal vertex set satisfying either (a) or (b). 
If $R$ satisfies (a), then 
we can find in polynomial time an $(s, t)$-path $P$ in $D[R]$ that consists of an $(s, t')$-path $Q_1$ and a $(t', t)$-path $Q_2$ for some $t' \in R$ satisfying the following (Figure~\ref{fig:15}).
\begin{enumerate}
    \item The vertex $s$ is a safe source in some inclusionwise minimal set $S\in \mathcal{F}_{\rm in}$ with $S\subseteq R$, and $t$ is a safe sink in some inclusionwise minimal set $T\in \mathcal{F}_{\rm out}$ with $T\subsetneq R$.
    \item $(V(Q_1) - t')\cap X = \emptyset$ for every $X\in  \mathcal{F}^R_{\rm out}$. That is, the subpath $Q_1$ is disjoint from any set in $\mathcal{F}^R_{\rm out}$, except for the end vertex $t'$.
    \item $A(Q_2)\cap \Delta^+_D(X) =\emptyset$ for every $X\in \mathcal{F}_{\rm out}$. That is, the subpath $Q_2$ does not intersect with $\Delta^+_D(X)$ for any $X\in \mathcal{F}_{\rm out}$.
\end{enumerate}
\end{lemma}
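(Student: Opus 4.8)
## Proof Proposal for Lemma~\ref{lem:defP}

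The plan is to construct the path $P$ in two stages, producing $Q_1$ first (ending at the auxiliary vertex $t'$) and then $Q_2$, while carefully controlling how each subpath interacts with the families $\mathcal{F}_{\rm out}$ and $\mathcal{F}_{\rm in}$. First I would work inside $D[R]$. Since $R$ satisfies (a), the family $\mathcal{F}^R_{\rm out}$ is nonempty; pick an inclusionwise minimal member $T$ of $\mathcal{F}^R_{\rm out}$ (this will also be inclusionwise minimal in $\mathcal{F}_{\rm out}$, since any smaller set in $\mathcal{F}_{\rm out}$ would already lie in $\mathcal{F}^R_{\rm out}$). By Lemma~\ref{lem:107} (the existence lemma promised for safe sources/sinks), $T$ contains a safe sink $t$. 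The vertex $t'$ will be chosen on the ``boundary'' reachable from $t$ by backward arcs that do not cross out of any set in $\mathcal{F}_{\rm out}$: concretely, let $W$ be the set of vertices $w$ such that there is a $(w,t)$-path in $D[R]$ using no arc of $\bigcup_{X \in \mathcal{F}_{\rm out}} \Delta^+_D(X)$. Any $(t',t)$-path inside $W$ then automatically satisfies condition~3. The key point is that $W$ must reach up to (a safe source of) some minimal $S \in \mathcal{F}_{\rm in}$ with $S \subseteq R$; this is where the $k$-edge-connectivity of $D$ and the minimality of $R$ get used, via the submodular inequalities of Lemmas~\ref{lem:102}, \ref{lem:103} and Corollary~\ref{cor:Fmin_disjoint}.

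Next I would pin down $t'$ and build $Q_1$. Having identified a suitable minimal $S \in \mathcal{F}_{\rm in}$ with $S \subseteq R$ and a safe source $s \in S$ (again using Lemma~\ref{lem:107}), I would route $Q_1$ from $s$ forward through $D[R]$ toward $W$, stopping the first time it hits a vertex of some $X \in \mathcal{F}^R_{\rm out}$; that vertex is $t'$. Because we stop at the first such hit, $Q_1$ meets no set in $\mathcal{F}^R_{\rm out}$ except at its last vertex $t'$, which is exactly condition~2. The nontrivial existence claim here is that such a forward path from $s$ actually reaches $\bigcup \mathcal{F}^R_{\rm out}$ (equivalently, reaches the ``$W$-region'') before getting stuck: one argues that if $s$ could not reach any set in $\mathcal{F}^R_{\rm out}$ via forward arcs, the forward-reachable set $U$ from $s$ would have $\delta^+_D(U) = $ (number of arcs of $D[R]$ leaving $U$ into $R \setminus U$) $= 0$ within $R$, forcing $U \cup (V \setminus R)$ or $U$ itself to contradict either $R \in \mathcal{F}_{\rm in}$, the $k$-edge-connectivity of $D$, or the minimality of $R$. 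Then $P := Q_1 \cdot Q_2$, where $Q_2$ is a $(t',t)$-path inside $W$ (such a path exists by definition of $W$ and since $t' \in W$), and condition~1 holds by the choices of $s$ and $t$.

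The remaining obstacle — and I expect it to be the main one — is proving the two reachability statements simultaneously and consistently: that $W$ (backward-reachable-from-$t$ avoiding out-cuts) and the forward-reachable region from the safe source $s$ actually meet inside $R$, with the meeting point lying on the boundary of some $X \in \mathcal{F}^R_{\rm out}$, so that $Q_1$ and $Q_2$ can be glued at $t'$. This requires a delicate case analysis on a putative separating set and repeated use of submodularity (Lemmas~\ref{lem:102} and~\ref{lem:103}) together with the inclusionwise minimality of $R$ to derive a contradiction whenever the regions fail to meet; the safe-source/safe-sink machinery of Section~\ref{sec:safevertex} is what makes the endpoints work, but the middle-of-the-path routing is where the real work lies. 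A secondary, more routine point is verifying that every step — computing $R$, $S$, $T$, $W$, and the two subpaths — is polynomial-time, which follows from standard minimum-cut and BFS/DFS computations as already noted for $\mathcal{F}_{\rm min}$ and $R$.
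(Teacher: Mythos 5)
Your proposal has the right general shape (safe sink in a minimal $T\subsetneq R$, safe source in a minimal $S\subseteq R$, a first-hit vertex $t'$ to get condition~2, and a restricted-reachability region to get condition~3), but the two reachability claims on which the whole construction rests are exactly the parts you leave unproven, and you say so yourself. Concretely, you fix $T$ and the safe sink $t$ \emph{first} and then need (i) the backward region $W$ of vertices reaching $t$ in $D[R]$ without using any arc of $\Delta^+_D(X)$, $X\in\mathcal{F}_{\rm out}$, to contain a safe source of some inclusionwise minimal $S\in\mathcal{F}_{\rm in}$ with $S\subseteq R$, and (ii) the first vertex $t'$ of $\bigcup\mathcal{F}^R_{\rm out}$ met by a forward path from $s$ to lie in $W$. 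Neither claim follows from the submodularity lemmas you cite, and with $t$ fixed in advance claim (ii) is the wrong statement to aim for: what can be guaranteed (and is what the paper proves) is only that from a given vertex one can reach \emph{some} inclusionwise minimal set of $\mathcal{F}_{\rm out}$ without crossing any out-cut, not a prescribed one. This is the content of the paper's Lemmas~\ref{lem:104}, \ref{lem:105} and~\ref{lem:105'}, proved by induction on the size of the minimal set $X_{\rm out}(\cdot)$ containing the current vertex; your sketch contains no substitute for that inductive engine.

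The paper's proof also makes the choices in a different (and essential) order, which is what makes the gluing work: pick any $T^*\in\mathcal{F}^R_{\rm out}$ and $t^*\in T^*$; apply Lemma~\ref{lem:105'} to $t^*$ to obtain the minimal $S$ together with an $(s,t^*)$-path avoiding $\Delta^-_D(X)$ for all $X\in\mathcal{F}_{\rm in}$ (this is what forces the path into $D[R]$ and yields $S\subseteq R$, which you also do not establish for your ``suitable'' $S$); let $t'$ be the first vertex of that path lying in some $T'\in\mathcal{F}^R_{\rm out}$, giving $Q_1$; only then apply Lemma~\ref{lem:105} to $t'$ to obtain $T$, its safe sink $t$, and $Q_2$, with $V(Q_2)\subseteq T'\subsetneq R$, whence $T\subsetneq R$ and the concatenation $Q_1\cdot Q_2$ is a simple path (a point your proposal never addresses, since your $Q_1$ and $W$ may intersect). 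Your observation that a vertex of $R$ can always reach $\bigcup\mathcal{F}^R_{\rm out}$ inside $D[R]$ (via the minimality of $R$) is correct in spirit, but it does not repair the main gap: without the Lemma~\ref{lem:105}/\ref{lem:105'}-type statements, the endpoints $s$, $t'$, $t$ you construct need not be joinable by paths satisfying conditions~2 and~3 simultaneously, so the proposal as written does not prove the lemma.
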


\begin{figure}[t]
\centering
\includegraphics[width=7cm]{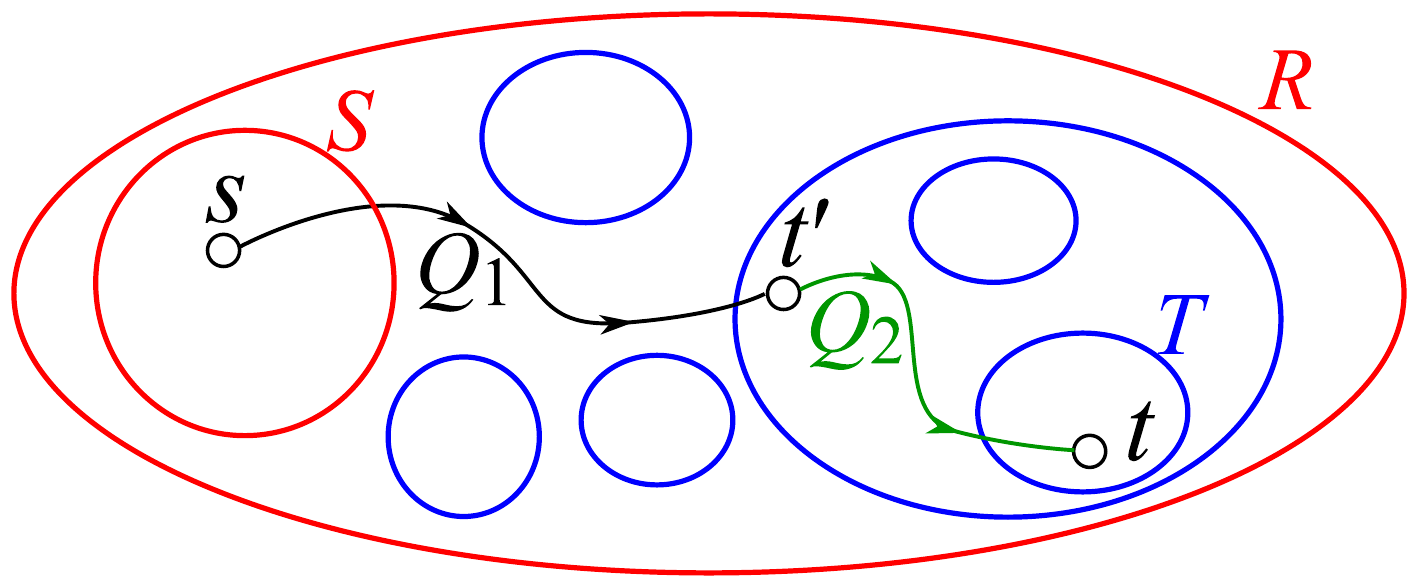}
\caption{Conditions in Lemma~\ref{lem:defP}.}
\label{fig:15}
\end{figure}

Let $P$ be a path satisfying the conditions in Lemma~\ref{lem:defP}. 
Suppose that $P$ traverses arcs $e_{\ell}, e_{\ell -1}, \dots , e_2$, and $e_1$ in this order from $s$ to $t$. 
Then, we flip $e_1, e_2, \dots , e_{\ell-1}$, and $e_\ell$ in this order. 
For $i=1, 2, \dots , \ell$, let $D_i$ be the directed graph obtained from $D$ by flipping 
$e_1, e_2, \dots , e_{i-1}$, and $e_i$. 
Then, our algorithm returns $D_1, D_2, \dots , D_\ell$.

We now show that $D_1, D_2, \dots  , D_{\ell-1}$, and $D_\ell$ satisfy the conditions in Proposition~\ref{prop:dec_val}. 
By Lemma~\ref{lem:defP}, $D_1, D_2, \dots  , D_{\ell-1}$, and $D_\ell$ can be computed in polynomial time.  
Furthermore, $\ell \le |V|$ and $D \rightarrow D_1 \rightarrow D_2 \rightarrow \dots \rightarrow D_\ell$ are obvious by definition. 
In the next subsection, we will prove $\lambda(D_i) \ge k$ for each $i \in \{1, 2, \dots , \ell\}$ and 
${\sf val}(D_\ell) < {\sf val}(D)$.


\subsection{Validity of the Algorithm}
\label{sec:validity}

Suppose that the algorithm described in the previous subsection finds a path $P$ satisfying the conditions in Lemma~\ref{lem:defP}, and returns $D_1, D_2, \dots  , D_{\ell-1}$, and $D_\ell$. 
The following two lemmas show that they satisfy the conditions in Proposition~\ref{prop:dec_val}.

\begin{lemma}
\label{lem:Di_is_kconnected}
For each $i \in \{1, 2, \dots , \ell\}$, $D_i$ is $k$-edge-connected. 
\end{lemma}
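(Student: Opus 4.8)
The plan is to fix a non-empty proper subset $X \subsetneq V$ and show that both $\delta^+_{D_i}(X) \ge k$ and $\delta^-_{D_i}(X) \ge k$ for every $i \in \{1,\dots,\ell\}$. Since $D_i$ is obtained from $D$ by flipping the first $i$ arcs $e_1,\dots,e_i$ along the $(t,s)$-traversal of $P$, and these arcs form a (sub)path of $P$, the general principle is the same as in the proof of Theorem~\ref{thm:main} in Section~\ref{sec:reconfproof}: flipping the arcs of a directed subpath changes $\delta^+(X)$ and $\delta^-(X)$ by at most one each, because a subpath crosses the boundary $E_G(X)$ alternately, so the number of its arcs leaving $X$ and the number entering $X$ differ by at most one. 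Hence if $\delta^+_D(X) \ge k+1$ and $\delta^-_D(X) \ge k+1$, we are immediately done; the only danger is a set $X$ with $\delta^+_D(X) = k$ (i.e.\ $X \in \mathcal{F}_{\rm out}$ up to complementation) or $\delta^-_D(X) = k$ (i.e.\ $X \in \mathcal{F}_{\rm in}$ up to complementation), where we must show that flipping $e_1,\dots,e_i$ does not strictly decrease the critical side.

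So the heart of the argument is: for $X$ with $\delta^+_D(X)=k$, the prefix $\{e_1,\dots,e_i\}$ of $P$ (read from $t$) has at least as many arcs entering $X$ as leaving $X$, so $\delta^+_{D_i}(X) \ge \delta^+_D(X) = k$; and symmetrically for $X$ with $\delta^-_D(X)=k$. I would handle this by splitting $P$ into $Q_1$ (from $s$ to $t'$) and $Q_2$ (from $t'$ to $t$) and using the two structural conditions of Lemma~\ref{lem:defP}. Consider a set $X$ with $r\notin X$ and $X\in\mathcal{F}_{\rm out}$ (the case $r\in X$ reduces to a set in $\mathcal{F}_{\rm in}$ by complementation, handled symmetrically). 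Condition~3 of Lemma~\ref{lem:defP} says $A(Q_2)\cap\Delta^+_D(X)=\emptyset$, so $Q_2$ never leaves $X$; therefore along $Q_2$ the first boundary crossing (if any) must enter $X$, which forces: in any prefix of the full path $P$ read from $t$, among the arcs of $Q_2$, the number entering $X$ is at least the number leaving $X$ (indeed the latter is zero). Once the traversal passes $t'$ into $Q_1$, I need the analogous control; here I would argue that $Q_1$ meets no set of $\mathcal{F}^R_{\rm out}$ except at $t'$ (Condition~2), and combine this with the fact that $X\cap R$ lies in $\mathcal{F}_{\rm out}$ by the submodularity Lemmas~\ref{lem:102}–\ref{lem:103} (so a set in $\mathcal{F}^R_{\rm out}$, hence untouched by $Q_1$) together with the behavior of $X\setminus R$, to conclude the alternation bound on the whole of $\{e_1,\dots,e_i\}$. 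The symmetric statement for $X\in\mathcal{F}_{\rm in}$ follows by the same reasoning with the roles of $Q_1$ and $Q_2$ (and leaving/entering) swapped, using that $s$ lies in an $\mathcal{F}_{\rm in}$ set inside $R$ and that $P$ is chosen not to enter any set of $\mathcal{F}_{\rm in}$ prematurely.

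The main obstacle I anticipate is the bookkeeping for sets $X$ that straddle $R$, i.e.\ with both $X\cap R\ne\emptyset$ and $X\setminus R\ne\emptyset$: Lemma~\ref{lem:defP} controls how $P$ interacts with subsets of $R$ (namely $\mathcal{F}^R_{\rm out}$ and $\mathcal{F}_{\rm out}$), but a tight set $X$ need not be contained in $R$. I expect to resolve this by replacing $X$ with $X\cap R$ or $X\cup R$ using Lemmas~\ref{lem:102} and~\ref{lem:103}: if $X\in\mathcal{F}_{\rm out}$ and $R\in\mathcal{F}_{\rm in}$ with both $X\setminus R\ne\emptyset$ and $R\setminus X\ne\emptyset$, then $X\setminus R\in\mathcal{F}_{\rm out}$; and the entire path $P$ lies in $D[R]$, so arcs of $P$ never touch $\Delta^+_D$ or $\Delta^-_D$ of any vertex outside $R$ in a way that affects $X\setminus R$. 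Thus the boundary of $X$ that $P$ can cross is exactly the boundary of $X\cap R$ (as a subset of $R$), and the crossing pattern is governed by Conditions~2 and~3 applied to $X\cap R$. The one remaining subtlety is that $t'$ itself may lie in such a set (this is exactly why Condition~2 is stated with the exception ``except for the end vertex $t'$''), but since only a single vertex $t'$ is shared and $t'$ is not an endpoint of a boundary arc counted on both $Q_1$ and $Q_2$ sides simultaneously, the alternation bound of $1$ is preserved. Assembling these case distinctions carefully is the bulk of the work; each individual step is a short submodularity or parity argument.
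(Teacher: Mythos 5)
Your reduction is right: since $D_i$ is obtained from $D$ by reversing the directed subpath of $P$ from the tail $p$ of $e_i$ to $t$, the cut values change by at most one and the only dangerous sets are $X^*\in\mathcal{F}_{\rm out}$ with $p\in X^*$, $t\notin X^*$, or $X^*\in\mathcal{F}_{\rm in}$ with $t\in X^*$, $p\notin X^*$ (and restricting to $X\subseteq V-r$ by complementation is fine). But the heart of the lemma is ruling these sets out, and there your sketch has two genuine gaps. First, your key step for the out-case claims that $X\cap R\in\mathcal{F}_{\rm out}$ ``by Lemmas~\ref{lem:102}--\ref{lem:103},'' so that Condition~2 of Lemma~\ref{lem:defP} applies to it. That is not what these lemmas give: for $X\in\mathcal{F}_{\rm out}$ and $R\in\mathcal{F}_{\rm in}$, Lemma~\ref{lem:103} yields $X-R\in\mathcal{F}_{\rm out}$ and $R-X\in\mathcal{F}_{\rm in}$, and there is no submodularity inequality forcing $\delta^+_D(X\cap R)=k$. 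Conditions~2--3 alone cannot exclude a tight out-set that straddles $R$ and contains an internal vertex of $Q_1$; what is actually needed, and what you never invoke, is the \emph{minimality of $T$ and of $R$}: one first shows $p\in V(Q_1)-t'$ (via Condition~3) and $X^*\cap T=\emptyset$ (via minimality of $T$), deduces $X^*-R\neq\emptyset$ from Condition~2, applies Lemma~\ref{lem:103} to get $R-X^*\in\mathcal{F}_{\rm in}$, and then observes that $R-X^*\supseteq T$ satisfies condition (a) while $R-X^*\subsetneq R$, contradicting the minimal choice of $R$. Without this use of the extremal choice of $R$ the case does not close.

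Second, the in-case is not symmetric in the way you assert. You appeal to ``$P$ is chosen not to enter any set of $\mathcal{F}_{\rm in}$ prematurely,'' but Lemma~\ref{lem:defP} guarantees no such property: Condition~3 controls only $\Delta^+_D(X)$ for $X\in\mathcal{F}_{\rm out}$ along $Q_2$, and indeed in the construction $Q_2$ may well enter sets of $\mathcal{F}_{\rm in}$. So a set $X^*\in\mathcal{F}_{\rm in}$ with $t\in X^*$, $p\notin X^*$ cannot be excluded by any avoidance property of the path; it is excluded by a case analysis on how $X^*$ meets $T$ ($X^*\subseteq T$, $T\subseteq X^*$, or crossing), each case contradicting the minimality of $T$ or of $R$ via Lemmas~\ref{lem:102} and~\ref{lem:103}. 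In short, your parity bookkeeping along $Q_1$ and $Q_2$ is sound but is not the decisive mechanism; the decisive mechanism is the extremal choice of $R$, $S$, $T$ together with the uncrossing lemmas, and that ingredient is missing from your argument.
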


\begin{proof}
Assume to the contrary that $D_i$ is not $k$-edge-connected for some $i \in \{1, 2, \dots , \ell\}$, that is, 
$\delta^+_{D_i} (X) < k$ for some $X \subseteq V - r$ or $\delta^-_{D_i} (X) < k$ for some $X \subseteq V - r$. 
Let $p \in V$ be the tail of $e_i$. Then, since $D_i$ is obtained from $D$ by reversing the direction of the subpath of $P$ from $p$ to $t$,
we obtain 
\begin{linenomath}
\begin{align}
\delta^+_{D_i} (X) &= 
\begin{cases}
\delta^+_{D} (X) - 1 & \mbox{if } t \not\in X \mbox{ and } p \in X, \\
\delta^+_{D} (X) + 1 & \mbox{if } t \in X \mbox{ and } p \not\in X, \\
\delta^+_{D} (X)  &  \mbox{otherwise,}
\end{cases} \label{eq:01}\\
\delta^-_{D_i} (X) &= 
\begin{cases}
\delta^-_{D} (X) - 1 & \mbox{if } t \in X \mbox{ and } p \not\in X, \\
\delta^-_{D} (X) + 1 & \mbox{if } t \not\in X \mbox{ and } p \in X, \\
\delta^-_{D} (X)  &  \mbox{otherwise}
\end{cases} \label{eq:02}
\end{align}
\end{linenomath}
for any $X \subseteq V$. 
Since $D$ is $k$-edge-connected and $D_i$ is not $k$-edge-connected, there exists a vertex set $X^* \subseteq V-r$ such that either 
\begin{itemize}
    \item $\delta^+_{D} (X^*) = k$ (equivalently, $X^* \in \mathcal{F}_{\rm out}$), $t \not\in X^*$,  and $p \in X^*$, or 
    \item $\delta^-_{D} (X^*) = k$ (equivalently, $X^* \in \mathcal{F}_{\rm in}$), $t \in X^*$,  and $p \not\in X^*$. 
\end{itemize}
We treat the two cases separately. 
Recall that $t \in T$, where $T$ is a minimal vertex set in $\mathcal{F}_{\rm out}$. 

\bigskip
\noindent
\textbf{Case 1:} $X^* \in \mathcal{F}_{\rm out}$, $t \not\in X^*$, and $p \in X^*$ (see Figure~\ref{fig:08}). 

\begin{figure}[t]
\centering
\includegraphics[width=6cm]{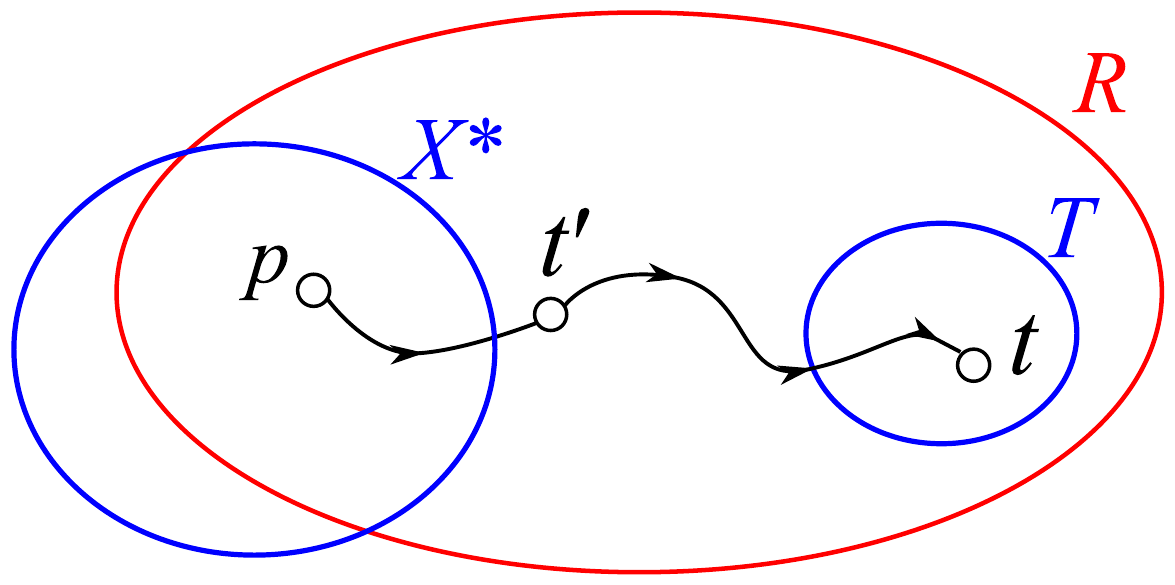}
\caption{Case 1 of Lemma~\ref{lem:Di_is_kconnected}.}
\label{fig:08}
\end{figure}

Recall that $P$ is the concatenation of $Q_1$ and $Q_2$ as in Lemma~\ref{lem:defP}. 
We can derive a contradiction if $p \in V(Q_2)$ or $X^* \cap T \not= \emptyset$ as follows. 
\begin{itemize}
    \item If $p \in V(Q_2)$, then $Q_2$ contains the $(p, t)$-path and hence it contains an edge in $\Delta^+_D(X^*)$, which contradicts that
        $A(Q_2) \cap \Delta^+_D(X) = \emptyset$ for any $X \in \mathcal{F}_{\rm out}$. 
    \item If $X^* \cap T \not= \emptyset$, then $X^* \cap T \in \mathcal{F}_{\rm out}$ by Lemma~\ref{lem:102}. 
        Since $X^* \cap T \subseteq T-t \subsetneq T$, this contradicts that $T$ is a minimal vertex set in $\mathcal{F}_{\rm out}$. 
\end{itemize}
Therefore, $p \in V(P) - V(Q_2) = V(Q_1) - t'$ and $X^* \cap T = \emptyset$ hold. 
By the second condition of Lemma~\ref{lem:defP}, we have $X^* \not\in \mathcal{F}^R_{\rm out}$. 
This together with $X^* \in \mathcal{F}_{\rm out}$ shows that $X^* - R \not= \emptyset$. 
We also see that $T \subseteq R - X^*$ holds by $X^* \cap T = \emptyset$, in particular, $R - X^* \not= \emptyset$ holds. 
Then, by applying Lemma~\ref{lem:103} to $R$ and $X^*$, we obtain $R - X^* \in \mathcal{F}_{\rm in}$. 
Since $T \subseteq R - X^*$ and $T \in \mathcal{F}_{\rm out}$, this shows that $R - X^*$ satisfies the condition (a). 
This contradicts the minimality of $R$, 
since $R-X^* \subseteq R-p \subsetneq R$. 

\bigskip
\noindent
\textbf{Case 2:} $X^* \in \mathcal{F}_{\rm in}$, $t \in X^*$, and $p \not\in X^*$. 

In this case, we derive a contradiction as follows. 
\begin{enumerate}
    \item[(i)] If $X^* \subseteq T$, then $T$ satisfies the condition (b), which contradicts the minimality of $R$ (Figure~\ref{fig:09}). 
    \item[(ii)] If $T \subseteq X^*$, then $R \cap X^* \in \mathcal{F}_{\rm in}$ by Lemma~\ref{lem:102}. 
    This together with $T \subseteq R \cap X^*$ shows that $R\cap X^*$ satisfies the condition (a). 
    Since $R \cap X^* \subseteq R - p \subsetneq R$, this contradicts the minimality of $R$ (Figure~\ref{fig:11}). 
    \item[(iii)] If $T-X^* \not= \emptyset$ and $X^*-T \not= \emptyset$, then $T-X^* \in \mathcal{F}_{\rm out}$ by Lemma~\ref{lem:103}. 
    Since $T-X^* \subseteq T-t \subsetneq T$, this contradicts that $T$ is a minimal vertex set in $\mathcal{F}_{\rm out}$ (Figure~\ref{fig:10}). 
\end{enumerate}

\begin{figure}[t]
 \begin{minipage}{0.33\hsize}
    \centering
    \includegraphics[width=3.8cm]{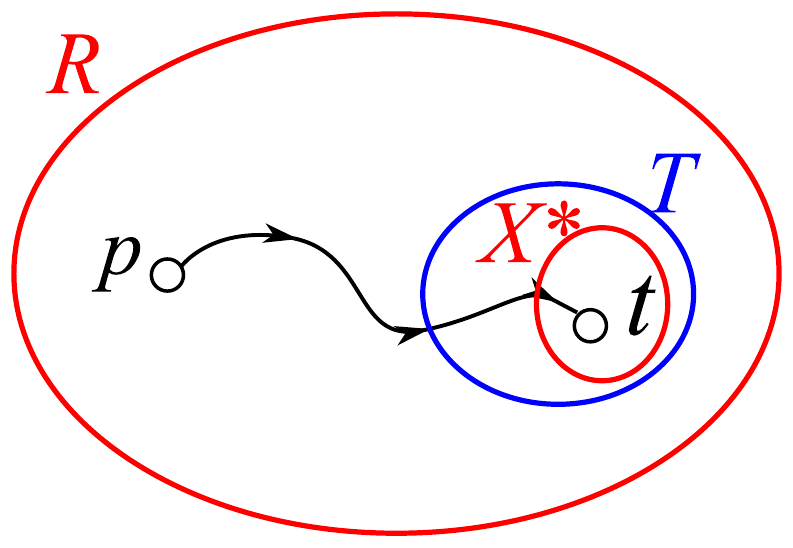}
    \caption{Case 2 (i).}
    \label{fig:09}
 \end{minipage}
 \begin{minipage}{0.33\hsize}
    \centering
    \includegraphics[width=4cm]{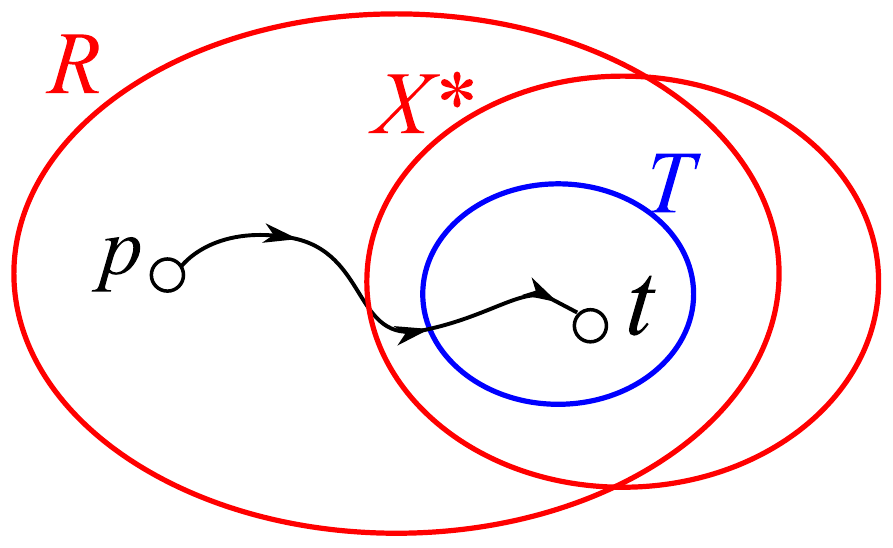}
    \caption{Case 2 (ii).}
    \label{fig:11}
 \end{minipage}
 \begin{minipage}{0.33\hsize}
    \centering
    \includegraphics[width=4cm]{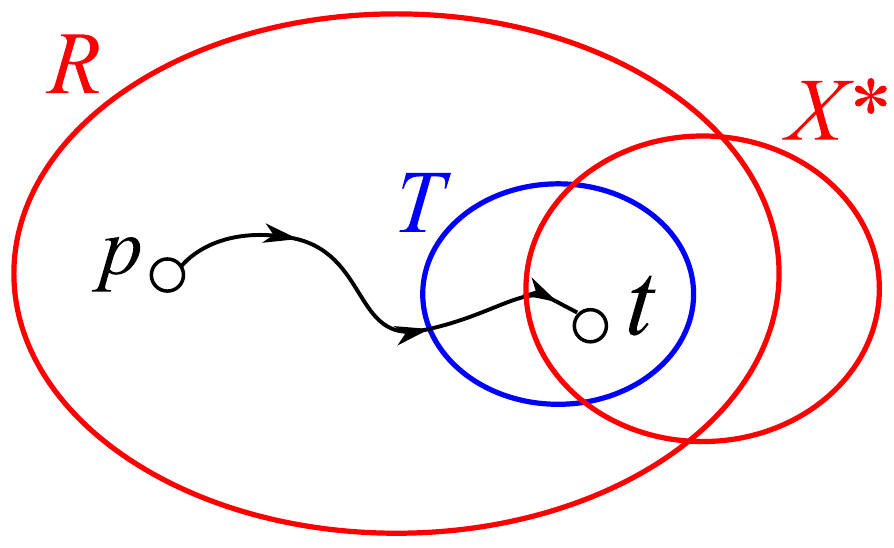}
    \caption{Case 2 (iii).}
    \label{fig:10}
 \end{minipage}
\end{figure}

By Cases 1 and 2, $D_i$ is $k$-edge-connected for each $i \in \{1, 2, \dots , \ell\}$. 
\end{proof}

We next show that ${\sf val}(D)$ is decreased by the procedure, where we recall that 
${\sf val}(D) :=  \sum_{X \in \mathcal{F}_{\rm min}(D)} (|V| - |X|)$.

\begin{lemma}
\label{lem:decrease_val}
${\sf val}(D_\ell) < {\sf val}(D)$. 
\end{lemma}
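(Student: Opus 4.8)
\textbf{Proof proposal for Lemma~\ref{lem:decrease_val}.}

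The plan is to track how $\mathcal{F}_{\rm out}$, $\mathcal{F}_{\rm in}$, and hence $\mathcal{F}_{\rm min}$ change when we pass from $D$ to $D_\ell$, the orientation obtained by flipping the whole path $P$ from $t$ to $s$. Writing $D_\ell$ for the result, the key observation is that for a vertex set $X\subseteq V-r$, the values $\delta^+(X)$ and $\delta^-(X)$ change only according to whether $X$ separates $s$ and $t$: since $P$ is an $(s,t)$-path and we reverse all its edges, $\delta^+_{D_\ell}(X)=\delta^+_D(X)-1$ if $s\in X,t\notin X$; $\delta^+_{D_\ell}(X)=\delta^+_D(X)+1$ if $t\in X,s\notin X$; and $\delta^+_{D_\ell}(X)=\delta^+_D(X)$ otherwise (and symmetrically for $\delta^-$), with the caveat that $P$ may revisit ``sides'' of $X$ — but because $P$ is a path, the net contribution is exactly the $\pm 1$ or $0$ dictated by the endpoints, as already recorded in \eqref{eq:01}–\eqref{eq:02} (with $p$ replaced by $s$). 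First I would record this, so that membership in $\mathcal{F}_{\rm out}(D_\ell)$ is understood: a set $X$ lies in $\mathcal{F}_{\rm out}(D_\ell)$ iff either $X\in\mathcal{F}_{\rm out}(D)$ and $X$ does not have $s$ inside and $t$ outside, or $\delta^+_D(X)=k+1$ with $s\in X$, $t\notin X$; symmetrically for $\mathcal{F}_{\rm in}(D_\ell)$ with $\delta^-$ and the roles of $s,t$ swapped.

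Next I would show the two containments that make $\mathsf{val}$ drop. \textbf{Step 1: $S$ and $T$ leave $\mathcal{F}_{\rm min}$.} Since $t\in T$, $s\notin T$ (as $s\in S\subseteq R$ with $S$ an inclusionwise minimal set in $\mathcal{F}_{\rm in}$, and $T$ is minimal in $\mathcal{F}_{\rm out}$, and $\mathcal{F}_{\rm out}\cap\mathcal{F}_{\rm in}=\{V\}$; one must check $s\neq t$ and $s\notin T$, which follows because $T\in\mathcal{F}_{\rm out}^R$ so any nonempty $T\cap S$ would give a smaller member of $\mathcal{F}_{\rm out}$ inside $R$, contradicting minimality of $T$ or of $S$). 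Then $\delta^+_{D_\ell}(T)=\delta^+_D(T)+1=k+1$, so $T\notin\mathcal{F}_{\rm out}(D_\ell)$; and $T\notin\mathcal{F}_{\rm in}(D_\ell)$ since $\delta^-(T)$ is unchanged or increased and $T$ was not in $\mathcal{F}_{\rm in}(D)$ (recall $\mathcal{F}_{\rm out}\cap\mathcal{F}_{\rm in}=\{V\}$, $T\neq V$). Symmetrically $s\in S$, $t\notin S$, so $\delta^-_{D_\ell}(S)=k+1$ and $S\notin\mathcal{F}_{\rm in}(D_\ell)\cup\mathcal{F}_{\rm out}(D_\ell)$. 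So at least one of the old minimal sets $S$ (if $S\in\mathcal{F}_{\rm min}(D)$) or some set it contained disappears; more care is needed because $S,T$ need not themselves lie in $\mathcal{F}_{\rm min}(D)$. \textbf{Step 2: no genuinely new small set appears.} Suppose $X\in\mathcal{F}_{\rm min}(D_\ell)$ but $X\notin\mathcal{F}_{\rm min}(D)$, and moreover $X$ is not one of the ``old'' sets — precisely, $X\notin\mathcal{F}_{\rm out}(D)\cup\mathcal{F}_{\rm in}(D)$. By the membership description above, this forces either $\delta^+_D(X)=k+1$, $s\in X$, $t\notin X$ (so $X$ newly enters $\mathcal{F}_{\rm out}$), or $\delta^-_D(X)=k+1$, $t\in X$, $s\notin X$ (so $X$ newly enters $\mathcal{F}_{\rm in}$). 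In the first case, apply the definition of ``$s$ is a safe source in $S$'' with this $X$ (noting $s\in X$ and $S-X\neq\emptyset$ because $t\in S$? — careful: we need $S-X\neq\emptyset$; if $S\subseteq X$ then since also $t\notin X$ we get $t\notin S$, fine, but then $S\subseteq X$ and one argues $X$ contains the minimal set $S\in\mathcal{F}_{\rm in}(D_\ell)$... actually this sub-case needs the ``contains $S$ or $T$'' conclusion spelled out): condition~2 of safe source gives a set $X'\subseteq X-s$ with $X'\in\mathcal{F}_{\rm out}(D)$, and since $s\notin X'$, $t\notin X'$, $X'$ is unaffected by the flip, so $X'\in\mathcal{F}_{\rm out}(D_\ell)$ with $X'\subsetneq X$ — contradicting minimality of $X$ in $\mathcal{F}_{\rm min}(D_\ell)$. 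The second case is symmetric using ``$t$ is a safe sink in $T$''. Hence every $X\in\mathcal{F}_{\rm min}(D_\ell)$ satisfies $X\in\mathcal{F}_{\rm out}(D)\cup\mathcal{F}_{\rm in}(D)$.

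\textbf{Step 3: conclude the strict decrease.} From Step~2, every member of $\mathcal{F}_{\rm min}(D_\ell)$ is an old cut-set, i.e.\ lies in $\mathcal{F}_{\rm out}(D)\cup\mathcal{F}_{\rm in}(D)$, so it contains some member of $\mathcal{F}_{\rm min}(D)$. Using Corollary~\ref{cor:Fmin_disjoint} (both families consist of pairwise disjoint sets) I would set up a map $\phi$ from $\mathcal{F}_{\rm min}(D_\ell)$ to $\mathcal{F}_{\rm min}(D)$ sending $X$ to the unique old minimal set contained in it; disjointness makes $\phi$ well defined and injective, and $|X|\ge|\phi(X)|$. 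Therefore $\mathsf{val}(D_\ell)=\sum_{X\in\mathcal{F}_{\rm min}(D_\ell)}(|V|-|X|)\le\sum_{Y\in\phi(\mathcal{F}_{\rm min}(D_\ell))}(|V|-|Y|)\le\mathsf{val}(D)$. To get the \emph{strict} inequality it suffices to exhibit one old minimal set not in the image of $\phi$, or one $X$ with $|X|>|\phi(X)|$. Here I use Step~1: let $S_0\in\mathcal{F}_{\rm min}(D)$ be the minimal set with $S_0\subseteq S$ (so $S_0\subseteq R$, $S_0\in\mathcal{F}_{\rm in}(D)$, $s\in S_0$ — one should check $s$ can be chosen in $S_0$, or rather argue with whichever minimal set contains $s$). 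If $S_0\notin\mathrm{image}(\phi)$ we are done; otherwise some $X\in\mathcal{F}_{\rm min}(D_\ell)$ has $\phi(X)=S_0$, meaning $S_0\subseteq X$ and $s\in X$, and since $X$ is a new cut that is not old-minimal-itself-unchanged, one shows $X\supsetneq S_0$ strictly (if $X=S_0$ then $S_0\in\mathcal{F}_{\rm min}(D_\ell)$, but $\delta^-_{D_\ell}(S_0)=\delta^-_D(S_0)-1$?? — no, $s\in S_0$, $t\notin S_0$ gives $\delta^-_{D_\ell}(S_0)=k+1$ and $\delta^+_{D_\ell}(S_0)$ unchanged $\ge k+1$, so $S_0\notin\mathcal{F}_{\rm out}(D_\ell)\cup\mathcal{F}_{\rm in}(D_\ell)$, contradiction), so $|X|>|S_0|=|\phi(X)|$ and the inequality is strict. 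The main obstacle I anticipate is \textbf{exactly this bookkeeping around which minimal set contains $s$ and $t$} and ruling out the degenerate possibilities ($S\subseteq X$, $X=S_0$, $s$ or $t$ lying in several candidate sets) — the inequality itself is then a one-line disjointness/injectivity argument, but getting the safe-source/safe-sink hypotheses applied to the \emph{right} $X$ (checking $S-X\neq\emptyset$ resp.\ $T-X\neq\emptyset$ in each case) is the delicate part and is where the definitions in Section~\ref{sec:safevertex} must be invoked precisely.
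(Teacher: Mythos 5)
Your overall strategy follows the paper's (use \eqref{eq:03}--\eqref{eq:04} so that only sets separating $s$ from $t$ matter, and invoke the safe-source/safe-sink conditions to kill problematic new minimal sets via a smaller set $X'\subseteq X-s$ or $X-t$), but two of your three steps have genuine gaps, and they are exactly the points the paper's proof is organized around. The blanket conclusion of your Step~2 --- that every member of $\mathcal{F}_{\rm min}(D_\ell)$ already lies in $\mathcal{F}_{\rm out}(D)\cup\mathcal{F}_{\rm in}(D)$ --- is not supported and is false in general. The safe-source condition applies only to sets $X$ with $S-X\neq\emptyset$ (and the safe-sink condition only when $T-X\neq\emptyset$), so a set $X$ with $\delta^+_D(X)=k+1$, $s\in X$, $t\notin X$ and $S\subseteq X$ (or the symmetric situation with $T\subseteq X$) can perfectly well enter $\mathcal{F}_{\rm min}(D_\ell)$ without being an old tight set. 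The correct replacement, which is what the paper proves in Claims~\ref{clm:S=R} and~\ref{clm:Snot=R}, is that every new member of $\mathcal{F}_{\rm min}(D_\ell)$ must \emph{strictly contain} $S$ or $T$; you flag this sub-case in parentheses but then assert the stronger false statement and build Step~3 on it. This also undermines the counting: your map $\phi$ is not well defined as stated (a set can contain several pairwise disjoint members of $\mathcal{F}_{\rm min}(D)$, so ``the unique old minimal set contained in it'' need not exist), and the strict-decrease argument is left resting on the unproved ``contains $S$ or $T$'' statement.

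Step~1 is moreover incorrect in the case $S=R$, which the paper must treat separately. There $T\subsetneq R=S$, hence $t\in S$ and $\delta^-_{D_\ell}(S)=\delta^-_D(S)=k$, so $S$ remains in $\mathcal{F}_{\rm in}(D_\ell)$ (it simply was never in $\mathcal{F}_{\rm min}(D)$); also your argument for $s\notin T$ via disjointness of $S$ and $T$ collapses, since $T\cap S=T\neq\emptyset$ --- the paper instead deduces $s\notin T$ from condition~1 of the safe source applied to $X=T$, using $\delta^+_D(T)=k$. In this case there need not exist any member of $\mathcal{F}_{\rm min}(D)$ containing $s$, so the anchor $S_0\subseteq S$ with $s\in S_0$ in your Step~3 is unavailable; the strict decrease has to come from $T$ alone (only $T$ leaves $\mathcal{F}_{\rm min}$, and at most one new set, strictly containing $T$, can enter). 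Establishing the ``strictly contains $S$ or $T$'' claim together with the dichotomy $S=R$ versus $S\subsetneq R$ (in the latter both $S,T\in\mathcal{F}_{\rm min}(D)$ are removed and each is replaced by at most one strictly larger set) is the actual content of the lemma, and it is precisely the part your proposal defers.
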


\begin{proof}
To simplify the notation, 
we denote $\mathcal{F}_{\rm out} := \mathcal{F}_{\rm out} (D)$, $\mathcal{F}_{\rm in} := \mathcal{F}_{\rm in} (D)$, $\mathcal{F}_{\rm min} := \mathcal{F}_{\rm min} (D)$,  
$\mathcal{F}_{\rm out}' := \mathcal{F}_{\rm out} (D_\ell)$, $\mathcal{F}_{\rm in}' := \mathcal{F}_{\rm in} (D_\ell)$, and $\mathcal{F}_{\rm min}' := \mathcal{F}_{\rm min} (D_\ell)$. 
Recall that $D_\ell$ is obtained from $D$ by reversing the direction of an $(s, t)$-path. 
In the same way as (\ref{eq:01}) and (\ref{eq:02}), we see that 
\begin{linenomath}
\begin{align}
\delta^+_{D_\ell} (X) &= 
\begin{cases}
\delta^+_{D} (X) - 1 & \mbox{if } t \not\in X \mbox{ and } s \in X, \\
\delta^+_{D} (X) + 1 & \mbox{if } t \in X \mbox{ and } s \not\in X, \\
\delta^+_{D} (X)  &  \mbox{otherwise,} \label{eq:03}
\end{cases} \\
\delta^-_{D_\ell} (X) &= 
\begin{cases}
\delta^-_{D} (X) - 1 & \mbox{if } t \in X \mbox{ and } s \not\in X, \\
\delta^-_{D} (X) + 1 & \mbox{if } t \not\in X \mbox{ and } s \in X, \\
\delta^-_{D} (X)  &  \mbox{otherwise} \label{eq:04}
\end{cases} 
\end{align}
\end{linenomath}
for any $X \subseteq V$. 
This shows that, to investigate the gap between $\mathcal{F}_{\rm min}$ and $\mathcal{F}_{\rm min}'$, 
it suffices to focus on sets containing $s$ or $t$. 
We treat the following two cases separately.

\bigskip
\noindent
\textbf{Case 1: $S = R$.} 

Recall that $t \in T$ and $T$ is an inclusionwise minimal vertex set in $\mathcal{F}_{\rm out}$. 
Since $S-T \not= \emptyset$, $\delta^+_D (T)=k$, and $s$ is a safe source in $S$, we obtain $s \not\in T$. 
The minimality of $R$ implies that $T$ does not contain a set in $\mathcal{F}_{\rm in}$, and hence $T \in \mathcal{F}_{\rm min}$. 
Since $\delta^+_{D_\ell} (T) = \delta^+_{D} (T) + 1 = k+1$ and $\delta^-_{D_\ell} (T) \ge (2k+2) - \delta^+_{D_\ell} (T) = k+1$, 
it holds that $T \in \mathcal{F}_{\rm min}  -  (\mathcal{F}_{\rm out}' \cup \mathcal{F}_{\rm in}')$. 

The following claim asserts that only the set $T$ is removed from $\mathcal{F}_{\rm min}$ and, if some set $X$ is newly added to $\mathcal{F}_{\rm min}'$, then $X\supsetneq T$ holds.

\begin{claim}
\label{clm:S=R}
If $S=R$, then it holds that $\mathcal{F}_{\rm min}' = \mathcal{F}_{\rm min}  -  \{T\}$ or 
$\mathcal{F}_{\rm min}' = (\mathcal{F}_{\rm min}  -  \{T\}) \cup \{X\}$ for some $X \supsetneq T$. 
\end{claim}
\begin{proof}[Proof of Claim \ref{clm:S=R}]
We first show that $\mathcal{F}_{\rm min}  -  (\mathcal{F}_{\rm out}' \cup \mathcal{F}_{\rm in}') = \{T\}$. 
Assume to the contrary that there exists a set 
$X \in \mathcal{F}_{\rm min}  -  (\mathcal{F}_{\rm out}' \cup \mathcal{F}_{\rm in}')$ with $X \not= T$. 
Since $X \in \mathcal{F}_{\rm min}  -  (\mathcal{F}_{\rm out}' \cup \mathcal{F}_{\rm in}') \subseteq (\mathcal{F}_{\rm out} \cup \mathcal{F}_{\rm in})  -  (\mathcal{F}_{\rm out}' \cup \mathcal{F}_{\rm in}')$,
by (\ref{eq:03}) and (\ref{eq:04}), 
it holds that $|X \cap \{s, t\}| = 1$.
This shows that $X \in \mathcal{F}_{\rm in}$, $t \not\in X$, and $s \in X$,  
because $T$ is the unique element in $\mathcal{F}_{\rm min}$ containing $t$.  
Since $s \in X \cap S$, $S \in \mathcal{F}_{\rm in}$, and $X \in \mathcal{F}_{\rm in} \cap \mathcal{F}_{\rm min}$, 
we obtain $X \subseteq S$, and hence $X \subseteq S-t \subsetneq S$. 
This contradicts the fact that $S$ is an inclusionwise minimal vertex set in $\mathcal{F}_{\rm in}$ with $s \in S$. 
Therefore, we obtain $\mathcal{F}_{\rm min}  -  (\mathcal{F}_{\rm out}' \cup \mathcal{F}_{\rm in}') = \{T\}$. 

We now claim that $X \supsetneq T$ holds for any 
$X \in \mathcal{F}_{\rm min}'  -  \mathcal{F}_{\rm min}$.
Let $X$ be a set in $\mathcal{F}_{\rm min}'  -  \mathcal{F}_{\rm min}$.
Since $X \not= T$ is obvious, it suffices to show $X \supseteq T$. 
Since $X \in \mathcal{F}_{\rm out}' \cup \mathcal{F}_{\rm in}'$, by (\ref{eq:03}) and (\ref{eq:04}), we have one of the following: 
(i) $X \in \mathcal{F}_{\rm out} \cup \mathcal{F}_{\rm in}$, 
(ii) $s \not\in X$, $t \in X$, and $\delta^-_{D} (X) = k+1$, or
(iii) $s \in X$, $t \not\in X$, and $\delta^+_{D} (X) = k+1$. 
Then, for each case, $X \supseteq T$ holds if such a set $X$ exists as follows. 
\begin{enumerate}
\item[(i)]
If $X \in \mathcal{F}_{\rm out} \cup \mathcal{F}_{\rm in}$, then $X \not\in \mathcal{F}_{\rm min}$
implies that there exists a set $Y \subsetneq X$ with $Y \in \mathcal{F}_{\rm min}$. 
Since $X \in \mathcal{F}_{\rm min}'$, it holds that
$Y \in \mathcal{F}_{\rm min}  -  (\mathcal{F}_{\rm out}' \cup \mathcal{F}_{\rm in}') = \{T\}$. 
Therefore, $Y$ must be equal to $T$, and hence $X \supsetneq T$. 

\item[(ii)]
Suppose that $s \not\in X$, $t \in X$, and $\delta^-_{D} (X) = k+1$ (see Figure~\ref{fig:12}). 
Assume to the contrary that $X \supseteq T$ does not hold, i.e., $T-X \not= \emptyset$. 
Since $t$ is a safe sink in $T$, there exists a vertex set $X' \subseteq X-t$ with $X' \in \mathcal{F}_{\rm in}$
by the definition of a safe sink. 
This shows that $X' \subsetneq X$ and $X' \in \mathcal{F}_{\rm in}'$ as $s, t \not\in X'$, 
which contradicts $X \in \mathcal{F}_{\rm min}'$. 
Therefore, $X \supseteq T$ holds. 

\item[(iii)]
Suppose that $s \in X$, $t \not\in X$, and $\delta^+_{D} (X) = k+1$ (see Figure~\ref{fig:13}).
Then, $R-X \not= \emptyset$, because it contains $t$. 
Since $s$ is a safe source in $R$, there exists a vertex set $X' \subseteq X-s$ with $X' \in \mathcal{F}_{\rm out}$
by the definition of a safe source. 
This shows that $X' \subsetneq X$ and $X' \in \mathcal{F}_{\rm out}'$ as $s, t \not\in X'$, 
which contradicts $X \in \mathcal{F}_{\rm min}'$. 
Therefore, such $X$ does not exist. 

\begin{figure}[t]
 \begin{minipage}{0.49\hsize}
    \centering
    \includegraphics[width=5.5cm]{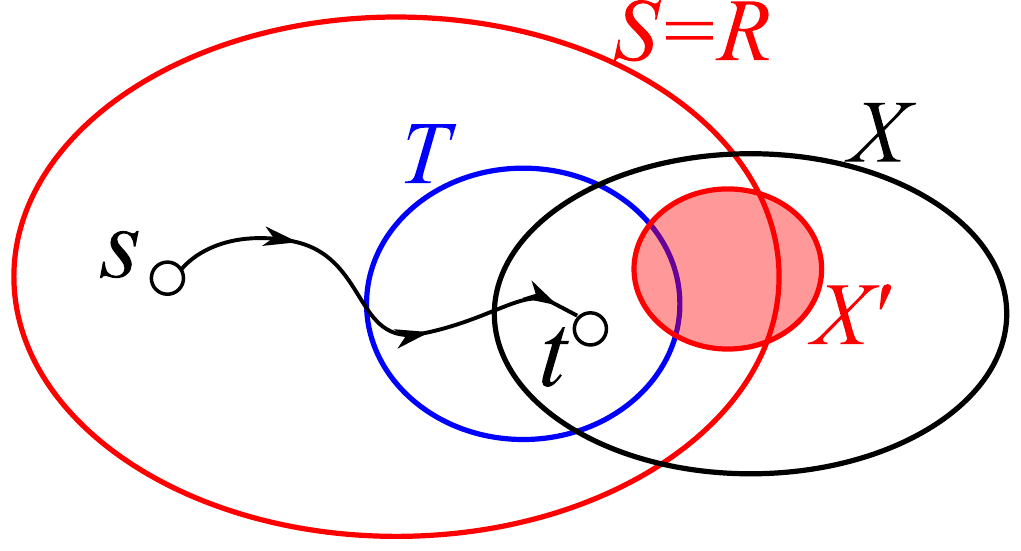}
    \caption{Case (ii) of Claim~\ref{clm:S=R}.}
    \label{fig:12}
 \end{minipage}
 \begin{minipage}{0.49\hsize}
    \centering
    \includegraphics[width=5.5cm]{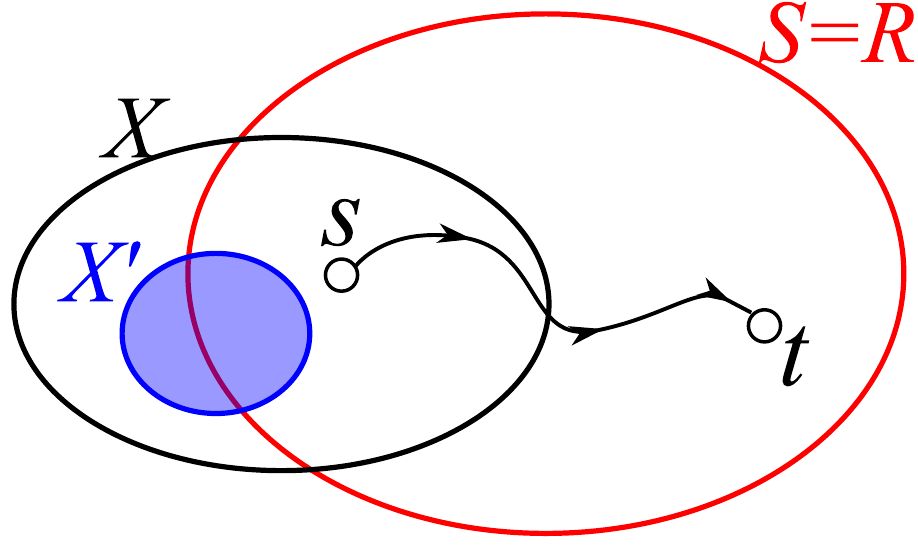}
    \caption{Case (iii) of Claim~\ref{clm:S=R}.}
    \label{fig:13}
 \end{minipage}
\end{figure}

\end{enumerate}

By the above argument, $X \supsetneq T$ holds for any 
$X \in \mathcal{F}_{\rm min}'  -  \mathcal{F}_{\rm min}$.
Since there exists at most one set in $\mathcal{F}_{\rm min}'$ containing $T$, 
we obtain $\mathcal{F}_{\rm min}' = \mathcal{F}_{\rm min}  -  \{T\}$ or 
$\mathcal{F}_{\rm min}' = (\mathcal{F}_{\rm min}  -  \{T\}) \cup \{X\}$ for some $X \supsetneq T$. 
Thus, Claim \ref{clm:S=R} holds.
\end{proof}

By Claim \ref{clm:S=R}, 
if $\mathcal{F}_{\rm min}' = \mathcal{F}_{\rm min}  -  \{T\}$, then ${\sf val}(D) - {\sf val}(D_\ell) = |V| - |T| > 0$, and, if $\mathcal{F}_{\rm min}' = (\mathcal{F}_{\rm min}  -  \{T\}) \cup \{X\}$ for some $X \supsetneq T$, then ${\sf val}(D) - {\sf val}(D_\ell) = |X| - |T| > 0$ holds.
This completes the proof for the case of $S = R$.

\bigskip
\noindent
\textbf{Case 2: $S \subsetneq R$.} 

Recall that $S$ is a minimal vertex set in $\mathcal{F}_{\rm in}$ and $s$ is a safe source in $S$. 
The minimality of $R$ implies that $S$ (resp.~$T$) does not contain a set in $\mathcal{F}_{\rm out}$ (resp.~$\mathcal{F}_{\rm in}$), and hence $S, T \in \mathcal{F}_{\rm min}$. This implies that $S$ and $T$ are disjoint. 
Since $\delta^-_{D_\ell} (S) = \delta^-_{D} (S) + 1 = k+1$, $\delta^+_{D_\ell} (S) \ge (2k+2) - \delta^-_{D} (S) = k+1$, 
$\delta^+_{D_\ell} (T) = \delta^+_{D} (T) + 1 = k+1$, and $\delta^-_{D_\ell} (T) \ge (2k+2) - \delta^+_{D_\ell} (T) = k+1$,
it holds that $S, T \in \mathcal{F}_{\rm min}  -  (\mathcal{F}_{\rm out}' \cup \mathcal{F}_{\rm in}')$. 
Therefore, by (\ref{eq:03}) and (\ref{eq:04}), we obtain $\mathcal{F}_{\rm min}  -  (\mathcal{F}_{\rm out}' \cup \mathcal{F}_{\rm in}') = \{S, T\}$. 
Moreover, we have the following claim.

\begin{claim}
\label{clm:Snot=R}
If $S\subsetneq R$, then one of the following cases holds:
\begin{itemize}
    \item $\mathcal{F}_{\rm min}' = \mathcal{F}_{\rm min}  -  \{S, T\}$, 
    \item $\mathcal{F}_{\rm min}' = (\mathcal{F}_{\rm min}  -  \{S, T\}) \cup \{X\}$ for some set $X$ with $X \supsetneq S$ or $X \supsetneq T$,
    \item $\mathcal{F}_{\rm min}' = (\mathcal{F}_{\rm min}  -  \{S, T\}) \cup \{X_s, X_t\}$ for some sets $X_s\supsetneq S$ and $X_t\supsetneq T$.
\end{itemize}
\end{claim}
\begin{proof}[Proof of Claim \ref{clm:Snot=R}]
We claim that $X \supsetneq S$ or $X \supsetneq T$ holds for any 
$X \in \mathcal{F}_{\rm min}'  -  \mathcal{F}_{\rm min}$.
Let $X$ be a set in $\mathcal{F}_{\rm min}'  -  \mathcal{F}_{\rm min}$.
Since $X \not= S, T$ is obvious, it suffices to show $X \supseteq S$ or $X \supseteq T$. 
Since $X \in \mathcal{F}_{\rm out}' \cup \mathcal{F}_{\rm in}'$, by (\ref{eq:03}) and (\ref{eq:04}), we have one of the following: 
(i) $X \in \mathcal{F}_{\rm out} \cup \mathcal{F}_{\rm in}$, 
(ii) $s \not\in X$, $t \in X$, and $\delta^-_{D} (X) = k+1$, or
(iii) $s \in X$, $t \not\in X$, and $\delta^+_{D} (X) = k+1$. 
For the cases (i) and (ii), we see that $X \supseteq S$ or $X \supseteq T$ holds in the same way as Case 1. 
For the case (iii), we can show that $X \supseteq S$ holds in the same way as (ii) as follows. 

Suppose that $s \in X$, $t \not\in X$, and $\delta^+_{D} (X) = k+1$ (Figure~\ref{fig:14}).
Assume to the contrary that $X \supseteq S$ does not hold, i.e., $S-X \not= \emptyset$. 
Since $s$ is a safe source in $S$, there exists a vertex set $X' \subseteq X-s$ with $X' \in \mathcal{F}_{\rm out}$
by the definition of a safe source. 
This shows that $X' \subsetneq X$ and $X' \in \mathcal{F}_{\rm out}'$ as $s, t \not\in X'$, 
which contradicts $X \in \mathcal{F}_{\rm min}'$. 
Therefore, $X \supseteq S$ holds. 

\begin{figure}[t]
\centering
\includegraphics[width=5.5cm]{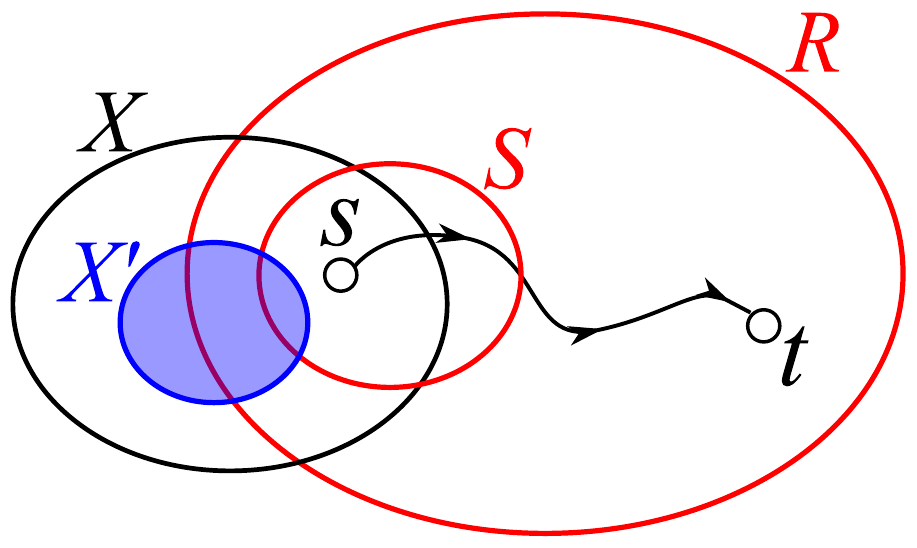}
\caption{Case (iii) of Claim~\ref{clm:Snot=R}.}
\label{fig:14}
\end{figure}

By the above argument, $X \supsetneq S$ or $X \supsetneq T$ holds for any 
$X \in \mathcal{F}_{\rm min}'  -  \mathcal{F}_{\rm min}$.
Note that there exists at most one set $X_s$ (resp.~$X_t$) in $\mathcal{F}_{\rm min}'$ containing $S$ (resp.~$T$). 
Thus the claim holds.
\end{proof}

It follows from Claim \ref{clm:Snot=R} that ${\sf val}(D) - {\sf val}(D_\ell)>0$.
Indeed, for the first case, ${\sf val}(D) - {\sf val}(D_\ell) = 2|V| - |S| - |T| > 0$;
for the second case, ${\sf val}(D) - {\sf val}(D_\ell) = |V| + |X| - |S| - |T| > 0$;
for the last case, ${\sf val}(D) - {\sf val}(D_\ell) = |X_s| + |X_t| - |S| - |T| > 0$.
This completes the proof for the case of $S \subsetneq R$, and closes the whole proof of Lemma \ref{lem:decrease_val}.
\end{proof}

Lemmas~\ref{lem:Di_is_kconnected} and~\ref{lem:decrease_val} show that the output of our algorithm in Section~\ref{sec:mainproc}
satisfies the conditions in Proposition~\ref{prop:dec_val}. 
By repeatedly applying Proposition~\ref{prop:dec_val} at most $|V|^2$ times, we obtain Theorem~\ref{thm:main02}. 
\qed

\subsection{Construction of a Path (Proof of Lemma~\ref{lem:defP})}
\label{sec:defP}

In this section, we first show some useful lemmas in Sections~\ref{sec:lem107proof} and~\ref{sec:existpath}, 
and then give a proof of Lemma~\ref{lem:defP} in Section~\ref{sec:subsectiondefP}.

\subsubsection{Existence of a Safe Source and a Safe Sink}
\label{sec:lem107proof}

\begin{lemma}
\label{lem:107}
For any inclusionwise minimal vertex set $S$ in $\mathcal{F}_{\rm in}$ (or~$\mathcal{F}_{\rm out}$, respectively), 
there exists a safe source (resp.~a safe sink) $s$ in $S$. 
Furthermore, such a vertex $s$ can be found in polynomial time. 
\end{lemma}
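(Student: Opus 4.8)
We may assume, by reversing every edge of $D$ (which interchanges $\mathcal{F}_{\rm out}$ with $\mathcal{F}_{\rm in}$ and safe sources with safe sinks), that $S$ is an inclusionwise minimal set in $\mathcal{F}_{\rm in}$ and that we want to find a safe source in $S$. If $S=V$, then $r$ is a safe source vacuously, since no $X\subseteq V-r$ contains $r$. So assume $S\subseteq V-r$; then $\delta^-_D(S)=k$, and the $(2k+2)$-edge-connectivity of $G$ forces $\delta^+_D(S)\ge(2k+2)-k=k+2$. By Lemma~\ref{lem:103} and the minimality of $S$, no set $X\in\mathcal{F}_{\rm out}$ \emph{crosses} $S$ (i.e.\ has all of $X\cap S$, $X-S$, $S-X$ non-empty); consequently a vertex $s\in S$ violates the first condition in the definition of a safe source exactly when $s$ lies in some $X\in\mathcal{F}_{\rm out}$ with $X\subsetneq S$. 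Let $U\subseteq S$ be the set of vertices lying in no such $X$. The goal is to show that some vertex of $U$ is a safe source; in particular $U\ne\emptyset$.

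I argue by contradiction: suppose no vertex of $U$ is a safe source (which also covers the possibility $U=\emptyset$). Then each $v\in U$ violates the second condition, so the collection of sets $X$ with $v\in X\subseteq V-r$, $S-X\ne\emptyset$, $\delta^+_D(X)\le k+1$, and containing no $\mathcal{F}_{\rm out}$-set inside $X-v$ is non-empty; let $X_v$ be an inclusionwise minimal such set. Since $v\in U$ we cannot have $\delta^+_D(X_v)=k$, for then $X_v$ would be an $\mathcal{F}_{\rm out}$-set and, by the no-crossing fact, a proper subset of $S$ containing $v$; hence $\delta^+_D(X_v)=k+1$. Applying Lemma~\ref{lem:101} to $X_v$ and $S$ together with $\delta^-_D(S)=k$, $\delta^-_D(S-X_v)\ge k+1$ (minimality of $S$, since $S-X_v$ is a non-empty proper subset of $S$), and $X_v-S\subseteq X_v-v$ shows $X_v\subseteq S$, hence $X_v\subsetneq S$ and $\delta^-_D(X_v)\ge k+1$. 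Finally, $X_v$ contains no $\mathcal{F}_{\rm out}$-set at all: such a set would be a proper subset of $S$ containing $v$ (contradicting $v\in U$), or would be contained in $X_v-v$ (contradicting the choice of $X_v$).

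Set $\mathcal{G}:=\{X\subsetneq S : \delta^+_D(X)=k+1 \text{ and } X \text{ contains no } \mathcal{F}_{\rm out}\text{-set}\}$ and $\mathcal{G}_U:=\{X\in\mathcal{G} : X\cap U\ne\emptyset\}$; by the previous paragraph each $v\in U$ lies in $X_v\in\mathcal{G}_U$. Using the submodularity of $\delta^+_D$, the inequality $\delta^+_D(S)\ge k+2$, and the defining property of the $U$-vertices contained in these sets, a short uncrossing argument shows: (i) if $X,Y\in\mathcal{G}_U$ intersect, then $X\cup Y\in\mathcal{G}_U$; (ii) every member of $\mathcal{G}_U$ is disjoint from every $\mathcal{F}_{\rm out}$-set that is a proper subset of $S$; and (iii) distinct inclusionwise maximal $\mathcal{F}_{\rm out}$-subsets of $S$ are pairwise disjoint (by Lemma~\ref{lem:102}). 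Hence the inclusionwise maximal members $Y_1,\dots,Y_p$ of $\mathcal{G}_U$ together with the inclusionwise maximal $\mathcal{F}_{\rm out}$-subsets $Z_1,\dots,Z_q$ of $S$ are pairwise disjoint, and as they cover $U$ and $S-U$ respectively they partition $S$ into $N:=p+q\ge 1$ parts, every part $W$ obeying $\delta^+_D(W)\le k+1$ and $\delta^-_D(W)\ge k+1$. Writing $E_b$ for the number of edges of $D$ joining two different parts inside $S$, summing gives $\delta^+_D(S)+E_b=\sum_W\delta^+_D(W)\le Nk+p$ and $k+E_b=\delta^-_D(S)+E_b=\sum_W\delta^-_D(W)\ge N(k+1)$; eliminating $E_b$ yields $\delta^+_D(S)\le k-q\le k$, contradicting $\delta^+_D(S)\ge k+2$. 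Therefore some vertex of $U$ is a safe source, and the analogous statement for safe sinks follows by the edge-reversal symmetry. All objects used here — $U$, the maximal $\mathcal{F}_{\rm out}$-subsets of $S$, and the sets $Y_j$ — are computable via standard minimum-cut computations, so a safe source or sink can be exhibited in polynomial time.

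The step I expect to be the main obstacle is the uncrossing claim (i)--(iii): proving that $\mathcal{G}_U$ is closed under unions of intersecting pairs, and the two disjointness statements. Each requires a case analysis on the possible values of $\delta^+_D$ on the relevant intersections and unions, repeatedly using submodularity, the minimality of $S$, the bound $\delta^+_D(S)\ge k+2$, and the defining property of $U$; the key recurring point is that an $\mathcal{F}_{\rm out}$-set arising as such a union would be a proper subset of $S$ containing a vertex of $U$, which is impossible. Granting these, the resulting partition of $S$ and the final double-counting are routine.
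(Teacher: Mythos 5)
Your proposal is correct and is essentially the paper's own argument reorganized as a proof by contradiction: your $U$ is the paper's $S-\bigcup_i Y_i$, your family $\mathcal{G}_U$ coincides with the paper's family $\mathcal{G}$ of subsets of $S-\bigcup_i Y_i$ with out-degree $k+1$ (your claim (ii) forces members of $\mathcal{G}_U$ into $U$), your normalization of the witnesses $X_v$ via Lemma~\ref{lem:101} is the same submodularity computation the paper uses to verify the two safety conditions directly, and your final double count is exactly the paper's counting claim that $S-\bigcup_i Y_i-\bigcup_j Z_j\neq\emptyset$. The uncrossing claims (i)--(iii) that you defer do go through by the arguments you indicate (e.g.\ for (ii): if $X\in\mathcal{G}_U$ met $Y\in\mathcal{F}_{\rm out}$ with $Y\subsetneq S$, then $X\cup Y=S$ would force $\delta^+_D(X\cap Y)\le k-1$, while $X\cup Y\subsetneq S$ would make either $X\cup Y$ an $\mathcal{F}_{\rm out}$-set meeting $U$ or $X\cap Y$ an $\mathcal{F}_{\rm out}$-set inside $X$), so there is no real gap.
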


\begin{proof}
Let $S$ be an inclusionwise minimal vertex set in $\mathcal{F}_{\rm in}$. 
If $S=V$, then $s = r$ satisfies the conditions. Hence, it suffices to consider the case of $S \subseteq V - r$.  
In this case, we obtain $\delta^+_D(S) = \delta_G(S) - \delta^-_D(S) \ge (2k+2) - k = k+2$. 
Let
\[
\mathcal{F}^S_{\rm out} := \{ X \in \mathcal{F}_{\rm out} \mid X \subsetneq S \} = \{ X \subseteq S \mid \delta^+_D(X) = k \}
\]
and let $Y_1, Y_2, \dots , Y_{\alpha-1}$, and $Y_\alpha$ be the inclusionwise maximal vertex sets in $\mathcal{F}^S_{\rm out}$. 
Note that these sets are mutually disjoint, because $Y_i \cap Y_j \not= \emptyset$ implies $Y_i \cup Y_j \in \mathcal{F}^S_{\rm out}$ by Lemma~\ref{lem:102}. 
Let  
\[
\mathcal{G} := \left\{ Z \subseteq S - \bigcup_{i=1}^\alpha Y_i  \;\middle|\; \delta^+_D (Z) = k +1  \right\} 
\]
and let $Z_1, Z_2, \dots , Z_{\beta-1}$, and $Z_\beta$ be the inclusionwise maximal vertex sets in $\mathcal{G}$ (see Figure~\ref{fig:04}). 
We show the following two claims. 

\begin{figure}[t]
\centering
\includegraphics[width=5cm]{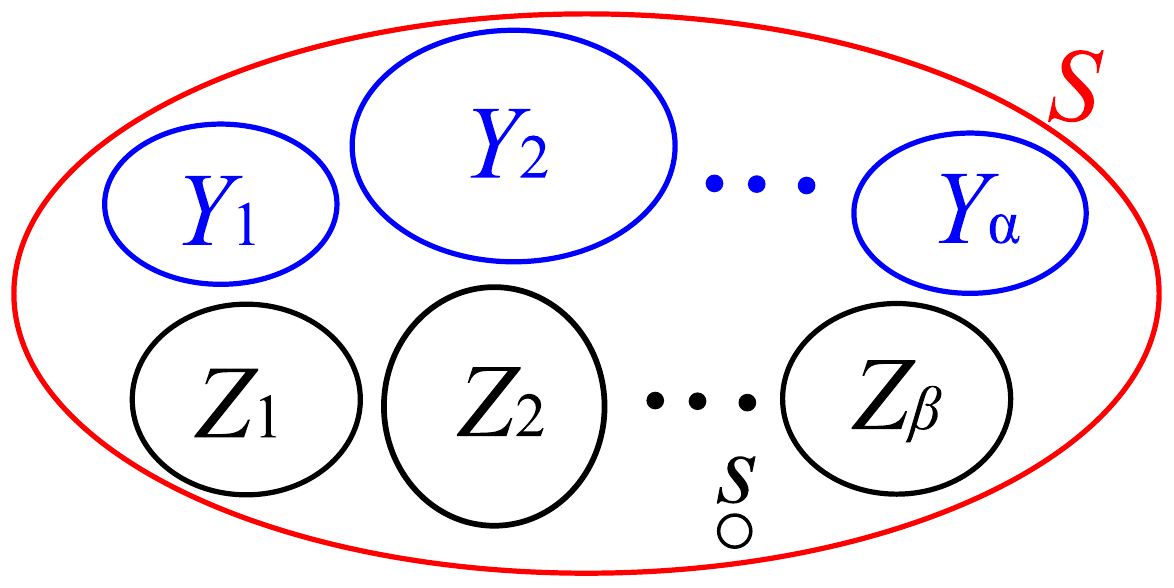}
\caption{$Y_i$, $Z_j$, and $s$.}
\label{fig:04}
\end{figure}

\begin{claim}
\label{clm:mutdisj}
$Z_1, Z_2, \dots , Z_{\beta-1}$, and $Z_\beta$ are mutually disjoint. 
\end{claim}
\begin{proof}[Proof of Claim \ref{clm:mutdisj}]
Assume to the contrary that $Z_i \cap Z_j \not= \emptyset$ for some distinct $i, j \in \{1, \dots , \beta\}$. 
Since $Z_i \cap Z_j \not\in \mathcal{F}^S_{\rm out}$ and $Z_i \cup Z_j \not\in \mathcal{F}^S_{\rm out}$, 
we obtain $\delta^+_D (Z_i \cap Z_j) \ge k+1$ and $\delta^+_D (Z_i \cup Z_j) \ge k+1$. 
Then, it holds that 
\begin{linenomath}
\begin{align*}
    2(k+1) = \delta^+_D (Z_i) + \delta^+_D (Z_j) \ge \delta^+_D (Z_i \cap Z_j) + \delta^+_D (Z_i \cup Z_j) \ge 2 (k+1).  
\end{align*}
\end{linenomath}
Therefore, $\delta^+_D (Z_i \cup Z_j) = k+1$, and hence $Z_i \cup Z_j \in \mathcal{G}$. 
This contradicts the maximality of $Z_i$ and $Z_j$. 
\end{proof}

\begin{claim}
\label{clm:nonempty}
$S - \bigcup_{i=1}^\alpha Y_i - \bigcup_{j=1}^\beta Z_j \not= \emptyset$.  
\end{claim}
\begin{proof}[Proof of Claim \ref{clm:nonempty}]
Assume to the contrary that $S - \bigcup_{i=1}^\alpha Y_i - \bigcup_{j=1}^\beta Z_j = \emptyset$. 
Then, 
\begin{linenomath}
\begin{align*}
    k \alpha + (k+1) \beta - (k+2) 
      &\ge \sum_{i=1}^\alpha \delta^+_D(Y_i) + \sum_{j=1}^\beta \delta^+_D(Z_j) - \delta^+_D (S) \\ 
      &= \sum_{i=1}^\alpha \delta^-_D(Y_i) + \sum_{j=1}^\beta \delta^-_D(Z_j) - \delta^-_D (S) \\ 
      &\ge \sum_{i=1}^\alpha (2k+2-\delta^+_D(Y_i)) + \sum_{j=1}^\beta (2k+2-\delta^+_D(Z_j)) - k \\ 
      &= (k+2) \alpha + (k+1) \beta - k \\
      &>    k \alpha + (k+1) \beta - (k+2), 
\end{align*}
\end{linenomath}
which is a contradiction. 
\end{proof}

By Claim \ref{clm:nonempty}, we can choose a vertex $s \in S - \bigcup_{i=1}^\alpha Y_i - \bigcup_{j=1}^\beta Z_j$ (see Figure~\ref{fig:04}). 
We now show that $s$ is a safe source in $S$. 
Recall that a vertex $s \in S$ is called a safe source in $S$ if, for any $X \subseteq V - r$ with $s \in X$ and $S - X \not= \emptyset$, 
\begin{enumerate}
    \item $\delta^+_D (X) \ge k+1$ holds, and 
    \item if $\delta^+_D (X) = k+1$, then there exists a vertex set $X' \subseteq X -s$ with $X' \in \mathcal{F}_{\rm out}$.   
\end{enumerate}

To show the first condition, assume to the contrary that $\delta^+_D (X) = k$ holds for some $X \subseteq V - r$ with $s \in X$ and $S - X \not= \emptyset$. 
Since $s \in X$ implies that $X \not\in \mathcal{F}^S_{\rm out}$, $X$ is not a subset of $S$, i.e., $X - S \not= \emptyset$. 
Then, $S - X \in \mathcal{F}_{\rm in}$ by Lemma~\ref{lem:103}, which contradicts the minimality of $S$ as $S-X \subseteq S-s \subsetneq S$. 
Therefore, the first condition is satisfied. 

To show the second condition, suppose that $\delta^+_D (X) = k+1$ holds for some $X \subseteq V - r$ with $s \in X$ and $S - X \not= \emptyset$.  
We treat the case of $X \subseteq S$ and that of $X-S \not= \emptyset$, separately.  
\begin{itemize}
    \item Suppose that $X \subseteq S$. 
    Since $X$ and its supersets are not in $\mathcal{G}$ by the choice of $s$, $X$ is not contained in $S - \bigcup_{i=1}^\alpha Y_i$, that is, 
    $X \cap Y_i \not= \emptyset$ for some $i \in \{1, \dots , \alpha\}$. 
    By the $k$-edge-connectedness of $D$, it holds that $\delta^+_D (X \cap Y_i) \ge k$. 
    Since $X \cup Y_i \supseteq Y_i + s$, we obtain $X \cup Y_i \not\in \mathcal{F}^S_{\rm out}$ by the maximality of $Y_i$, which implies that $\delta^+_D (X \cup Y_i) \ge k+1$. 
    Then, we obtain 
    \[
        2k+1 = \delta^+_D (X) + \delta^+_D (Y_i) \ge \delta^+_D (X \cap Y_i) + \delta^+_D (X \cup Y_i) \ge 2k+1, 
    \]
    and hence $\delta^+_D (X \cap Y_i) = k$ and $\delta^+_D (X \cup Y_i) = k+1$. 
    Therefore, $X' := X \cap Y_i$ satisfies that $X' \subseteq X - s$ and $X' \in \mathcal{F}_{\rm out}$ (see Figure~\ref{fig:05}).

    \item
    Suppose that $X-S \not= \emptyset$. 
    By the $k$-edge-connectedness of $D$, it holds that $\delta^+_D (X - S) \ge k$. 
    Since $S-X \subseteq S -s \subsetneq S$, we obtain $S-X \not\in \mathcal{F}_{\rm in}$ by the minimality of $S$, which implies that $\delta^-_D (S-X) \ge k+1$. 
    Then, by Lemma~\ref{lem:101}, we obtain 
    \[
        2k+1 = \delta^+_D (X) + \delta^-_D (S) \ge \delta^+_D (X - S) + \delta^-_D (S - X) \ge 2k+1, 
    \]
    and hence $\delta^+_D (X-S) = k$ and $\delta^-_D (S-X) = k+1$. 
    Therefore, $X' := X-S$ satisfies that $X' \subseteq X - s$ and $X' \in \mathcal{F}_{\rm out}$ (see Figure~\ref{fig:06}). 
\end{itemize}

By this argument, $s$ is a safe source in $S$. Furthermore, since 
$Y_1, Y_2, \dots , Y_{\alpha}$, $Z_1, Z_2, \dots , Z_{\beta-1}$, and $Z_\beta$ 
can be computed by using a minimum cut algorithm, 
a vertex $s \in S - \bigcup_{i=1}^\alpha Y_i - \bigcup_{j=1}^\beta Z_j$ can be found in polynomial time. 

By the same argument, if $S$ is an inclusionwise minimal vertex set in $\mathcal{F}_{\rm out}$, 
then a safe sink $s$ in $S$ can be found in polynomial time. 
\end{proof}

\begin{figure}[t]
 \begin{minipage}{0.59\hsize}
    \centering
    \includegraphics[width=6cm]{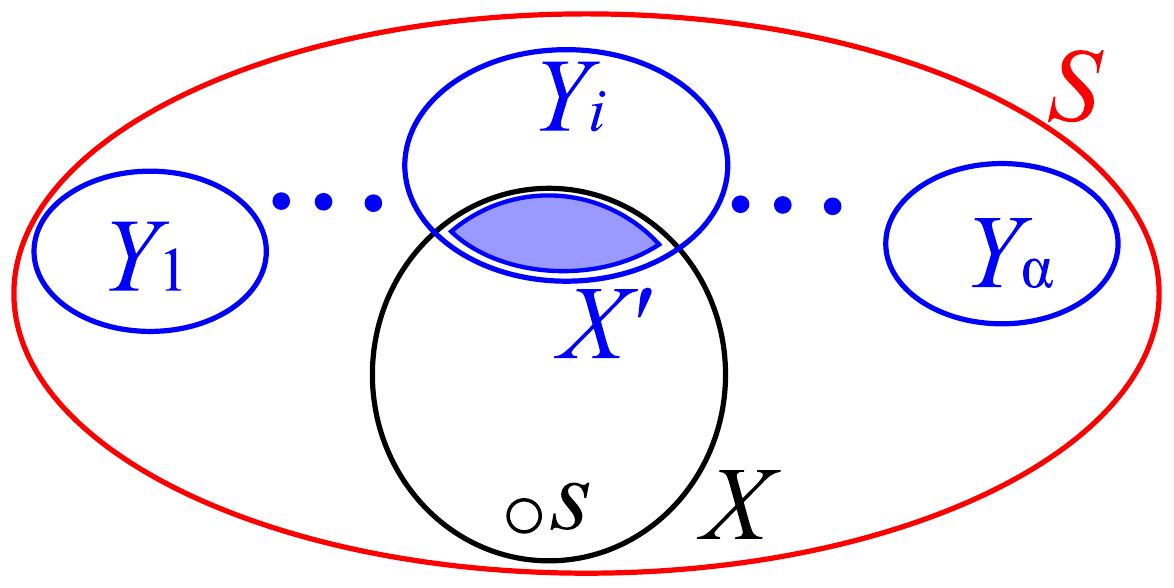}
    \caption{Case of $X \subseteq S$.}
    \label{fig:05}
 \end{minipage}
 \begin{minipage}{0.39\hsize}
    \centering
    \includegraphics[width=4.5cm]{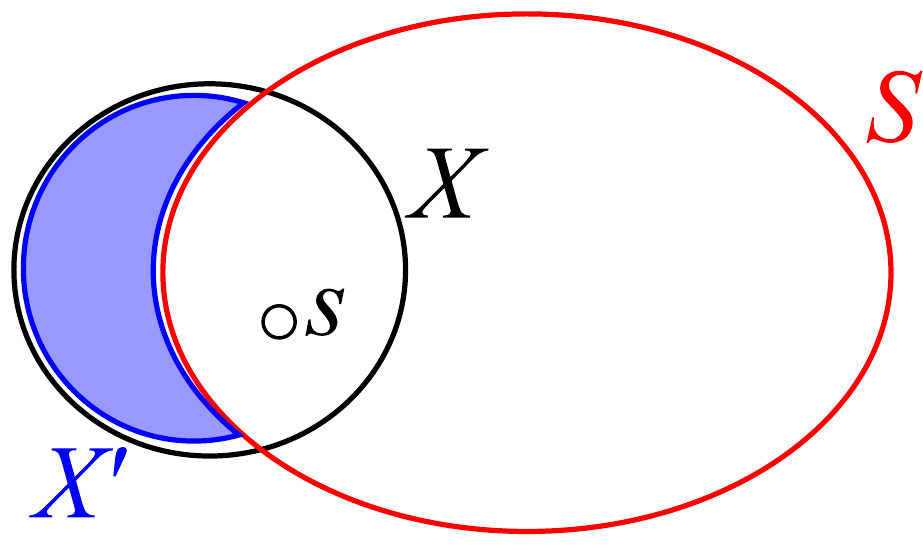}
    \caption{Case of $X-S \not= \emptyset$.}
    \label{fig:06}
 \end{minipage}
\end{figure}

\subsubsection{Path to a Minimal Vertex Set}
\label{sec:existpath}

The goal of this sub-subsection is to show Lemma~\ref{lem:105} below, saying that, for any vertex $s\in V$, $D$ has a path $P$ from $s$ to some inclusionwise minimal set $T$ in $\mathcal{F}_{\rm out}$ such that $P$ leaves no set in $\mathcal{F}_{\rm out}$.
Analogously to Lemma~\ref{lem:105}, we can obtain a path to any vertex $t\in V$ from some inclusionwise minimal set $S$ in $\mathcal{F}_{\rm in}$~(Lemma~\ref{lem:105'}). 
These paths will be used in our proof of Lemma~\ref{lem:defP}. 

\begin{lemma}
\label{lem:105}
For any vertex $s \in V$, there exists a vertex set $T \in \mathcal{F}_{\rm out}$
satisfying the following conditions: 
\begin{itemize}
    \item $T$ is inclusionwise minimal in $\mathcal{F}_{\rm out}$, and 
    \item for any vertex $t \in T$, $D$ contains an $(s,t)$-path $P_t$ such that $A(P_t) \cap \Delta^+_D(X) = \emptyset$ for any $X \in \mathcal{F}_{\rm out}$. 
\end{itemize}
Furthermore, such $T$ and $P_t$ can be found in polynomial time. 
\end{lemma}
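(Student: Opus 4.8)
The plan is to construct the target set $T$ together with the paths $P_t$ by a reachability argument in the ``auxiliary'' subgraph of $D$ obtained by deleting, for every $X \in \mathcal{F}_{\rm out}$, all the arcs in $\Delta^+_D(X)$. Equivalently, consider the set $U$ of all vertices $v$ reachable from $s$ in $D$ by a path $P$ with $A(P)\cap\Delta^+_D(X)=\emptyset$ for every $X\in\mathcal{F}_{\rm out}$; clearly $s\in U$. The key structural claim is that $U\in\mathcal{F}_{\rm out}$. To see this, first note that $U$ contains no arc leaving any $X\in\mathcal{F}_{\rm out}$ \emph{in the forward direction of a relevant path}, so a short argument shows that $U$ is ``closed'' in the following sense: if $v\in U$ and $(v,w)\in A$ with $(v,w)\notin\Delta^+_D(X)$ for all $X\in\mathcal{F}_{\rm out}$, then $w\in U$. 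Hence every arc $(v,w)\in\Delta^+_D(U)$ with $v\in U$ must lie in $\Delta^+_D(X)$ for some $X\in\mathcal{F}_{\rm out}$; intuitively all of $U$'s ``escape routes'' are blocked by tight sets. I would then argue that $U$ itself is tight, i.e.\ $\delta^+_D(U)=k$: suppose not, so $\delta^+_D(U)\ge k+1$. Using the submodularity of $\delta^+_D$ and Lemma~\ref{lem:102} to take unions of the tight sets $X\in\mathcal{F}_{\rm out}$ that block $U$'s outgoing arcs, one shows that the union $W$ of all such $X$ that intersect $U$ is again in $\mathcal{F}_{\rm out}$ and satisfies $\Delta^+_D(U)\subseteq\Delta^+_D(W)$ together with $U\subseteq W$; a counting/submodularity comparison then forces $\delta^+_D(U)=k$, a contradiction unless $U\in\mathcal{F}_{\rm out}$ already. (One must handle the degenerate case $U=V$ separately, where $U=V\in\mathcal{F}_{\rm out}$ by definition; also $r\in U$ is allowed since $V\in\mathcal{F}_{\rm out}$.)

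Once $U\in\mathcal{F}_{\rm out}$ is established, let $T$ be an inclusionwise minimal member of $\mathcal{F}_{\rm out}$ contained in $U$ (such $T$ exists and can be found by a minimum-cut computation, as remarked after Corollary~\ref{cor:Fmin_disjoint}). Every $t\in T\subseteq U$ is reachable from $s$ by a path $P_t$ avoiding all the sets $\Delta^+_D(X)$, $X\in\mathcal{F}_{\rm out}$, by the very definition of $U$. This gives both bullet points. For the polynomial-time claim, the set $U$ is computed by a single graph search in $D$ after first computing $\mathcal{F}_{\rm out}$'s relevant tight sets; since the arcs to delete are exactly those in $\bigcup_{X\in\mathcal{F}_{\rm out}}\Delta^+_D(X)$, and this union can be identified by checking, for each arc $(u,v)$, whether the minimum $u$--$r$ out-cut equals $k$ (equivalently whether $(u,v)$ lies on a tight set), the whole construction runs in polynomial time; the paths $P_t$ are recovered from the search tree.

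The main obstacle I anticipate is the proof that $U\in\mathcal{F}_{\rm out}$: one has to argue cleanly that the ``blocking'' tight sets can be aggregated into a single tight set $W\supseteq U$ with $\Delta^+_D(U)\subseteq\Delta^+_D(W)$, and then run the submodularity inequality $\delta^+_D(U)+\delta^+_D(\text{something})\ge\delta^+_D(U\cap\cdot)+\delta^+_D(U\cup\cdot)$ in the right direction to squeeze $\delta^+_D(U)$ down to $k$. The subtlety is that a priori the blocking sets may cross $U$ in complicated ways, so one likely needs to iterate Lemma~\ref{lem:102} (closure of $\mathcal{F}_{\rm out}$ under intersection and union for intersecting sets) to reduce to a single maximal blocking set, and to be careful that taking unions never produces $V$ unless $U=V$. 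A cleaner alternative I would also try is a direct contradiction: pick $X\in\mathcal{F}_{\rm out}$ minimal among those with $X\cap U\neq\emptyset$ and $X\not\subseteq U$, apply Lemma~\ref{lem:102} or Lemma~\ref{lem:103} to $X$ and $U$ (or $X$ and a suitable tight superset), and derive either that some vertex of $X\setminus U$ is actually reachable (contradicting the definition of $U$) or that a smaller tight set appears; this is essentially the standard ``uncrossing to a canonical reachable tight set'' argument, and I expect it to be the technical heart of the proof.
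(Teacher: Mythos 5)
Your overall construction (take the set $U$ of vertices reachable from $s$ after deleting $\bigcup_{X\in\mathcal{F}_{\rm out}}\Delta^+_D(X)$, and pick a minimal member of $\mathcal{F}_{\rm out}$ inside $U$) would indeed yield the lemma, but the key claim you rest it on, namely $U\in\mathcal{F}_{\rm out}$, is false, even under all the ambient hypotheses. Take $k=1$, $V=\{r,s,a,b\}$, and let $G$ have two parallel edges between $s$ and $r$, two between $a$ and $b$, and single edges $sa$, $bs$, $ra$, $rb$; this $G$ is $4$-edge-connected. Orient it as $D$ with arcs $s\to a$, $s\to r$, $r\to s$, $a\to b$, $b\to a$, $b\to s$, $r\to a$, $r\to b$; then $D$ is strongly connected, and the proper tight sets (members of $\mathcal{F}_{\rm out}$ other than $V$) are exactly $\{a\}$, $\{a,b\}$, $\{s,a,b\}$, so the deleted arcs are $a\to b$, $b\to s$, $s\to r$, while $s\to a$ survives. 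Hence $U=\{s,a\}$, which satisfies $\delta^+_D(U)=2>k$ and $U\neq V$, so $U\notin\mathcal{F}_{\rm out}$. The same example shows where your sketched submodularity argument breaks: the blocking tight sets aggregate (via Lemma~\ref{lem:102}) to $W=\{s,a,b\}\supseteq U$, but the containment $\Delta^+_D(U)\subseteq\Delta^+_D(W)$ you need fails, because the arc $a\to b$ leaves $U$ yet is internal to $W$; in general an arc leaving $U$ may be blocked by a small tight set inside $U$ while another is blocked by a large tight set containing $U$, and then no uncrossing can force $\delta^+_D(U)=k$.

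What your argument actually needs is the weaker statement that $U$ contains some (hence some inclusionwise minimal) member of $\mathcal{F}_{\rm out}$ -- in the example above $\{a\}\subseteq U$ and the lemma's conclusion holds with $T=\{a\}$ -- but establishing that is essentially the whole content of the lemma, and your proposal gives no argument for it once the claim $U\in\mathcal{F}_{\rm out}$ is abandoned. The paper proves exactly this reachability fact by a different route: it works with $X_{\rm out}(v)$, the unique minimal member of $\mathcal{F}_{\rm out}$ containing $v$, shows every vertex of $X_{\rm out}(s)$ is reachable inside $D[X_{\rm out}(s)]$, and then inducts on $|X_{\rm out}(s)|$ by walking to the first vertex $s'$ with $X_{\rm out}(s')\subsetneq X_{\rm out}(s)$, checking that this prefix crosses no tight cut forward. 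Some argument of that kind (a descent through strictly smaller minimal tight sets) has to replace your claim. A further, smaller point: your polynomial-time test for whether an arc $(u,v)$ is deleted is not the minimum $u$--$r$ out-cut being $k$; you must test whether some $X\in\mathcal{F}_{\rm out}$ has $u\in X$ and $v\notin X$, i.e., compute a minimum out-cut separating $u$ from both $v$ and $r$.
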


To prove the lemma, we need more definitions.
For a vertex $s \in V$, let $X_{\rm out}(s)$ denote the inclusionwise minimal vertex set subject to $s \in X_{\rm out}(s) \in \mathcal{F}_{\rm out}$. 
Note that such a vertex set always exists as $s \in V \in \mathcal{F}_{\rm out}$. 
Note also that the minimal one is uniquely determined, 
because if $s \in X \in \mathcal{F}_{\rm out}$ and $s \in Y \in \mathcal{F}_{\rm out}$, then $s \in X \cap Y \in \mathcal{F}_{\rm out}$  by Lemma~\ref{lem:102}. 
For each $s \in V$, we can easily compute $X_{\rm out}(s)$ in polynomial time by using a minimum cut algorithm.

\begin{lemma}
\label{lem:104}
Let $s \in V$. For any vertex $t \in X_{\rm out}(s)$, 
$D[X_{\rm out}(s)]$ contains a path from $s$ to $t$. 
\end{lemma}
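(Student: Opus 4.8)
The plan is a standard reachability argument: let $W$ denote the set of all vertices reachable from $s$ by directed paths lying entirely inside $D[X_{\rm out}(s)]$, and show $W = X_{\rm out}(s)$. Clearly $s\in W\subseteq X_{\rm out}(s)$, so I assume for contradiction that $W\subsetneq X_{\rm out}(s)$ and derive a contradiction with either the $k$-edge-connectedness of $D$ (recall $k\ge 1$ throughout this section) or the inclusionwise minimality of $X_{\rm out}(s)$.

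First I would establish the key containment $\Delta^+_D(W)\subseteq \Delta^+_D(X_{\rm out}(s))$. Indeed, take any edge $(u,v)\in\Delta^+_D(W)$, so $u\in W$ and $v\notin W$. If $v\in X_{\rm out}(s)$, then since $u\in W$ is reachable from $s$ inside $D[X_{\rm out}(s)]$ and the edge $(u,v)$ stays inside $X_{\rm out}(s)$, the vertex $v$ would also be reachable from $s$ inside $D[X_{\rm out}(s)]$, contradicting $v\notin W$. Hence $v\notin X_{\rm out}(s)$, which means $(u,v)\in\Delta^+_D(X_{\rm out}(s))$ (as $u\in W\subseteq X_{\rm out}(s)$). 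Therefore $\delta^+_D(W)\le \delta^+_D(X_{\rm out}(s))$.

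Now I split into two cases according to whether $X_{\rm out}(s)=V$. If $X_{\rm out}(s)=V$, then $\delta^+_D(X_{\rm out}(s))=0$, so $\delta^+_D(W)=0$; but $W$ is a non-empty proper subset of $V$ (non-empty since $s\in W$, proper since $W\subsetneq X_{\rm out}(s)=V$), contradicting the $k$-edge-connectedness of $D$ with $k\ge 1$. If $X_{\rm out}(s)\subsetneq V$, then $X_{\rm out}(s)\subseteq V-r$ and $\delta^+_D(X_{\rm out}(s))=k$, so $\delta^+_D(W)\le k$; combined with $\delta^+_D(W)\ge k$ (again by $k$-edge-connectedness, noting $W$ is a non-empty proper subset of $V$), we get $\delta^+_D(W)=k$. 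Since $\emptyset\neq W\subseteq X_{\rm out}(s)\subseteq V-r$, this gives $W\in\mathcal{F}_{\rm out}$ with $s\in W\subsetneq X_{\rm out}(s)$, contradicting the minimality of $X_{\rm out}(s)$. In both cases we reach a contradiction, so $W=X_{\rm out}(s)$, i.e.\ every $t\in X_{\rm out}(s)$ is reachable from $s$ in $D[X_{\rm out}(s)]$.

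This argument is entirely routine; there is no real obstacle beyond bookkeeping. The only point requiring a little care is handling the degenerate case $X_{\rm out}(s)=V$ (where $\mathcal{F}_{\rm out}$-membership is not available and one falls back on $\lambda(D)\ge k\ge1$), and making sure that the auxiliary set $W$ is a legitimate argument for both $k$-edge-connectedness and membership in $\mathcal{F}_{\rm out}$, which it is since $W\subseteq X_{\rm out}(s)\subseteq V-r$ in the non-degenerate case.
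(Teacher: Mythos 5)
Your proposal is correct and follows essentially the same argument as the paper: your reachability set $W$ plays exactly the role of the paper's cut set $S$ with no edges leaving it inside $X_{\rm out}(s)$, and both proofs conclude via $\delta^+_D(W)\le\delta^+_D(X_{\rm out}(s))=k$ together with $k$-edge-connectedness to contradict the minimality of $X_{\rm out}(s)$ (with the case $X_{\rm out}(s)=V$ handled by strong connectivity, as in the paper).
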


\begin{proof}
If $X_{\rm out}(s) = V$, then the lemma holds since $D$ is strongly connected, where we note that $k\ge 1$.
Thus, we consider the case when $X_{\rm out}(s) \neq V$.
Assume to the contrary that $D[X_{\rm out}(s)]$ does not contain a path from $s$ to $t$. 
Then, there exists a vertex set $S \subseteq X_{\rm out}(s)$ such that $s \in S$, $t \in X_{\rm out}(s) - S$, and 
$D$ has no edge from $S$ to $X_{\rm out}(s) - S$ (Figure \ref{fig:01}). 
Since $X_{\rm out}(s) \in \mathcal{F}_{\rm out}$ and $D$ is $k$-edge-connected, we obtain 
\[
k = \delta^+_D(X_{\rm out}(s)) \ge \delta^+_D(S) \ge k, 
\]
and hence $S \in \mathcal{F}_{\rm out}$. 
Since $s \in S \subsetneq X_{\rm out}(s)$, this contradicts the minimality of $X_{\rm out}(s)$. 
\end{proof}

\begin{figure}[t]
\centering
\includegraphics[width=5cm]{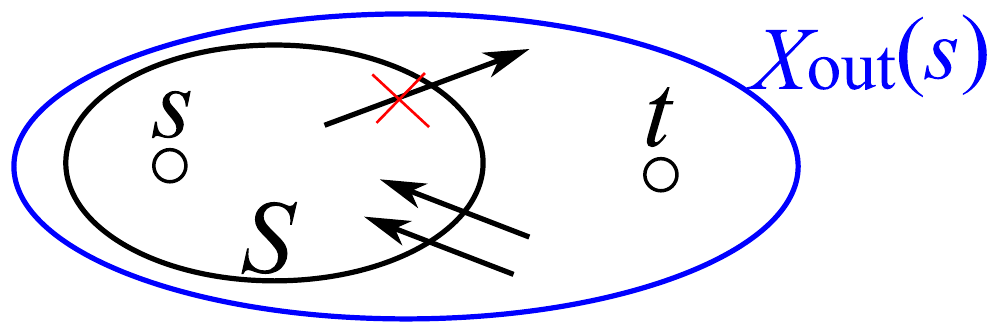}
\caption{Proof of Lemma~\ref{lem:104}.}
\label{fig:01}
\end{figure}

We are ready to prove Lemma~\ref{lem:105}.

\begin{proof}[Proof of Lemma~\ref{lem:105}]
We prove the lemma by induction on $|X_{\rm out}(s)|$. 

If $X_{\rm out}(s)$ is an inclusionwise minimal vertex set in $\mathcal{F}_{\rm out}$, 
then $T := X_{\rm out}(s)$ satisfies the condition. 
This is because the existence of $P_t$ is guaranteed by Lemma~\ref{lem:104} and 
$A(P_t) \cap \Delta^+_D(X) = \emptyset$ for any $X \in \mathcal{F}_{\rm out}$ follows from 
the minimality of $T=X_{\rm out}(s)$ and Lemma~\ref{lem:102}. 
This is the base case of the induction. 

Suppose that $X_{\rm out}(s)$ is not an inclusionwise minimal vertex set in $\mathcal{F}_{\rm out}$, that is, 
there exists a vertex set $Y$ in $\mathcal{F}_{\rm out}$ that is strictly contained in $X_{\rm out}(s)$.  
We can take a vertex $u \in X_{\rm out}(s)$ such that $X_{\rm out}(u) = Y$. 
By Lemma~\ref{lem:104}, $D[X_{\rm out}(s)]$ contains a path $Q$ from $s$ to $u$. 
Traverse along $Q$ from $s$ to $u$ and let $s'$ be the first vertex on $Q$ such that $X_{\rm out}(s') \subsetneq X_{\rm out}(s)$ (see Figure~\ref{fig:02}). 
Note that such $s'$ always exists as $u$ satisfies the condition.
Recall that $Q[s, s']$ denotes the subpath of $Q$ between $s$ and $s'$. 
We show the following claim. 

\begin{claim}
\label{clm:subpath}
$A(Q[s, s']) \cap \Delta^+_D(X) = \emptyset$ for any $X \in \mathcal{F}_{\rm out}$. 
\end{claim}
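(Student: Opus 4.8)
The claim to prove is Claim~\ref{clm:subpath}: along the subpath $Q[s,s']$, no edge leaves any set in $\mathcal{F}_{\rm out}$. The key fact driving everything is the choice of $s'$ as the \emph{first} vertex on $Q$ (traversing from $s$) with $X_{\rm out}(s') \subsetneq X_{\rm out}(s)$. So for every vertex $v$ strictly before $s'$ on $Q$ we have $X_{\rm out}(v) = X_{\rm out}(s)$, since $X_{\rm out}(v) \subseteq X_{\rm out}(s)$ always holds for $v \in X_{\rm out}(s)$ (by minimality of $X_{\rm out}(s)$ together with Lemma~\ref{lem:102}, as $X_{\rm out}(v) \cap X_{\rm out}(s) \ni v$ forces $X_{\rm out}(v) \cap X_{\rm out}(s) \in \mathcal{F}_{\rm out}$, hence $X_{\rm out}(v) \supseteq X_{\rm out}(s)$ would contradict minimality unless equality — more simply, $X_{\rm out}(v)$ is the minimal set in $\mathcal{F}_{\rm out}$ containing $v$, and $X_{\rm out}(s) \ni v$, so $X_{\rm out}(v) \subseteq X_{\rm out}(s)$, and it is not strictly smaller by choice of $s'$).

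\textbf{Main argument.} First I would suppose for contradiction that some edge $(a,b) \in A(Q[s,s'])$ lies in $\Delta^+_D(X)$ for some $X \in \mathcal{F}_{\rm out}$, i.e. $a \in X$ and $b \notin X$. Here $a$ is a vertex strictly before $s'$ on $Q$ (it is the tail of an edge of $Q[s,s']$), so by the observation above $X_{\rm out}(a) = X_{\rm out}(s)$. Since $a \in X \cap X_{\rm out}(s)$ and both lie in $\mathcal{F}_{\rm out}$, Lemma~\ref{lem:102} gives $X \cap X_{\rm out}(s) \in \mathcal{F}_{\rm out}$; as this set contains $a$, minimality of $X_{\rm out}(a) = X_{\rm out}(s)$ forces $X \cap X_{\rm out}(s) = X_{\rm out}(s)$, i.e. $X_{\rm out}(s) \subseteq X$. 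But $b \in X_{\rm out}(s)$ (since $b$ is also a vertex of $Q \subseteq D[X_{\rm out}(s)]$, being the head of an edge of $Q[s,s']$), so $b \in X_{\rm out}(s) \subseteq X$, contradicting $b \notin X$. That closes the claim.

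\textbf{The anticipated obstacle.} The technical soft spot is making sure all vertices of $Q[s,s']$ — both endpoints of each of its edges — are genuinely inside $X_{\rm out}(s)$ and that the tail of each such edge is \emph{strictly before} $s'$ so that the ``first vertex'' minimality applies. Since $Q$ is a path in $D[X_{\rm out}(s)]$ by Lemma~\ref{lem:104}, all its vertices, including $s'$, lie in $X_{\rm out}(s)$; and every edge of $Q[s,s']$ has its tail among $s = v_0, v_1, \dots, v_{j-1}$ where $s' = v_j$, all of which precede $s'$ and hence satisfy $X_{\rm out}(\cdot) = X_{\rm out}(s)$. I would state this bookkeeping carefully but it is routine. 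One edge case worth a sentence: if $s = s'$ then $Q[s,s']$ has no edges and the claim is vacuous, consistent with the convention that $s'$ is well-defined and equals $s$ exactly when $X_{\rm out}(s)$ is already not minimal only through $u$; but in fact $s' = s$ cannot fail the statement since an empty edge set trivially avoids every $\Delta^+_D(X)$.

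\textbf{How this feeds the induction.} After the claim, the next step (not part of Claim~\ref{clm:subpath} itself but worth previewing for context) is to apply the inductive hypothesis at $s'$: since $|X_{\rm out}(s')| < |X_{\rm out}(s)|$, we obtain a minimal $T \in \mathcal{F}_{\rm out}$ and, for each $t \in T$, an $(s',t)$-path $P'_t$ avoiding all $\Delta^+_D(X)$; concatenating $Q[s,s']$ with $P'_t$ and using Claim~\ref{clm:subpath} yields the desired $(s,t)$-path $P_t$, and one checks the concatenation is still a path because $V(Q[s,s']) \subseteq X_{\rm out}(s)$ while $V(P'_t) \subseteq X_{\rm out}(s') \subsetneq X_{\rm out}(s)$ need not be disjoint — so a short argument (e.g. take the last vertex of $Q[s,s']$ that also lies on $P'_t$, or shortcut) is needed to extract an actual path. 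That shortcutting is the only other place requiring a little care, and it does not affect the edge-set condition since removing edges only helps.
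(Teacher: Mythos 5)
Your proof is correct and follows essentially the same route as the paper: assume an edge $(a,b)$ of $Q[s,s']$ leaves some $X \in \mathcal{F}_{\rm out}$, intersect $X$ with $X_{\rm out}(s)$ via Lemma~\ref{lem:102}, and use the minimality of $X_{\rm out}(a)$ together with the ``first vertex'' choice of $s'$ to reach a contradiction (the paper phrases the contradiction against the choice of $s'$, you phrase it against $b \notin X$ — the same argument rearranged). The only difference is in your contextual aside on the concatenation step, where no shortcutting is actually needed since $V(Q[s,s']) \cap X_{\rm out}(s') = \{s'\}$ by the choice of $s'$, but that lies outside the claim itself.
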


\begin{proof}[Proof of Claim \ref{clm:subpath}]
Assume to the contrary that there exists an edge $(x,y) \in A(Q[s, s']) \cap \Delta^+_D(X^*)$ for some $X^* \in \mathcal{F}_{\rm out}$. 
Then $X^* \cap X_{\rm out}(s) \in \mathcal{F}_{\rm out}$ by Lemma~\ref{lem:102}.
Since $x \in X^* \cap X_{\rm out}(s)$, we see that $X_{\rm out}(x)\subseteq X^* \cap X_{\rm out}(s)$.
Therefore, since $X^* \cap X_{\rm out}(s) \subseteq X_{\rm out}(s) - y \subsetneq X_{\rm out}(s)$, we obtain $X_{\rm out}(x) \subsetneq X_{\rm out}(s)$, which contradicts the choice of $s'$. 
Thus, Claim \ref{clm:subpath} follows.
\end{proof}

Since $|X_{\rm out}(s')| < |X_{\rm out}(s)|$, by the induction hypothesis, 
there exists an inclusionwise minimal vertex set $T$ of $\mathcal{F}_{\rm out}$
satisfying the following condition: 
for any vertex $t \in T$, 
$D$ contains an $(s', t)$-path $P'_t$ such that 
$A(P'_t) \cap \Delta^+_D(X) = \emptyset$ for any $X \in \mathcal{F}_{\rm out}$. 
We remark that $P'_t$ is contained in $X_{\rm out}(s')$ as it contains no edge in $\Delta^+_D(X_{\rm out}(s'))$, noting that $X_{\rm out}(s') \in \mathcal{F}_{\rm out}$.

We now show that $T$ is a desired set also for $s$. 
For each $t \in T$, 
let $P_t$ be the $(s, t)$-path obtained by concatenating $Q[s, s']$ and $P'_t$. 
Note that $P_t$ is indeed a path (i.e., it goes through each vertex at most once), 
since $V(Q[s, s']) \cap X_{\rm out}(s') = \{s'\}$ and $V(P'_t) \subseteq X_{\rm out}(s')$. 
Then, Claim~\ref{clm:subpath} shows that 
$A(P_t) \cap \Delta^+_D(X) = (A(Q[s, s']) \cap \Delta^+_D(X)) \cup (A(P'_t) \cap \Delta^+_D(X)) = \emptyset$ for any $X \in \mathcal{F}_{\rm out}$.  
Hence $T$ is a desired set for $s$. 

\begin{figure}[t]
\centering
\includegraphics[width=8cm]{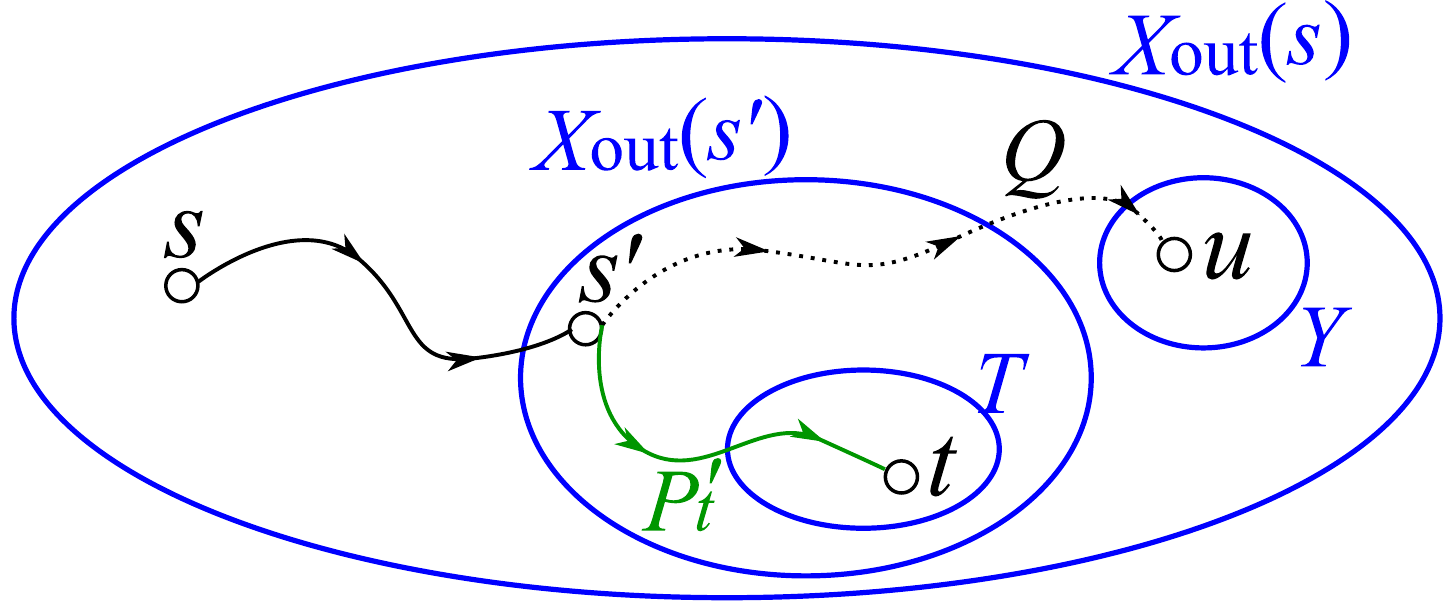}
\caption{Proof of Lemma~\ref{lem:105}.}
\label{fig:02}
\end{figure}

Since the above inductive proof can be converted to a recursive algorithm, 
$T$ and $P_t$ satisfying the conditions can be computed in polynomial time. 
\end{proof}

We note that the vertex set $T$ in Lemma~\ref{lem:105} is not necessarily an element of $\mathcal{F}_{\rm min}$, 
because there might exist a vertex set $U \subsetneq T$ with $U \in \mathcal{F}_{\rm in}$. 
By changing the roles of $\mathcal{F}_{\rm out}$ and $\mathcal{F}_{\rm in}$, we obtain the following lemma in the same way as Lemma~\ref{lem:105}. 

\begin{lemma}
\label{lem:105'}
For any vertex $t \in V$, there exists a vertex set $S \in \mathcal{F}_{\rm in}$
satisfying the following conditions: 
\begin{itemize}
    \item $S$ is inclusionwise minimal in $\mathcal{F}_{\rm in}$, and 
    \item for any vertex $s \in S$, $D$ contains an $(s,t)$-path $P_s$ such that $A(P_s) \cap \Delta^-_D(X) = \emptyset$ for any $X \in \mathcal{F}_{\rm in}$. 
\end{itemize}
Furthermore, such $S$ and $P_s$ can be found in polynomial time. 
\qed
\end{lemma}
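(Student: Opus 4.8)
The plan is to obtain Lemma~\ref{lem:105'} from Lemma~\ref{lem:105} by the ``reverse all edges'' duality, which exchanges the roles of $\mathcal{F}_{\rm out}$ and $\mathcal{F}_{\rm in}$, rather than repeating the inductive argument. Let $\overleftarrow{D}=(V,\overleftarrow{A})$ be the digraph obtained from $D$ by reversing every edge. First I would check that $\overleftarrow{D}$ satisfies all the standing hypotheses of this section: it is an orientation of the same $(2k+2)$-edge-connected graph $G$; it is $k$-edge-connected because $\delta^+_{\overleftarrow{D}}(X)=\delta^-_D(X)\ge k$ and $\delta^-_{\overleftarrow{D}}(X)=\delta^+_D(X)\ge k$ for every nonempty $X\subsetneq V$; and we keep the same fixed root $r$. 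The key identities $\delta^+_{\overleftarrow{D}}(X)=\delta^-_D(X)$ and $\delta^-_{\overleftarrow{D}}(X)=\delta^+_D(X)$ for all $X\subseteq V$ give, directly from the definitions, $\mathcal{F}_{\rm out}(\overleftarrow{D})=\mathcal{F}_{\rm in}(D)$ and $\mathcal{F}_{\rm in}(\overleftarrow{D})=\mathcal{F}_{\rm out}(D)$; in particular the inclusionwise minimal members of the two families are exchanged as well.

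Next I would apply Lemma~\ref{lem:105} to the digraph $\overleftarrow{D}$ with the vertex $t$ playing the role of ``$s$''. This produces an inclusionwise minimal set $S$ of $\mathcal{F}_{\rm out}(\overleftarrow{D})=\mathcal{F}_{\rm in}(D)$ together with, for each $s\in S$, a $(t,s)$-path $\overleftarrow{P_s}$ in $\overleftarrow{D}$ with $A(\overleftarrow{P_s})\cap\Delta^+_{\overleftarrow{D}}(X)=\emptyset$ for every $X\in\mathcal{F}_{\rm out}(\overleftarrow{D})$. Reversing $\overleftarrow{P_s}$ yields an $(s,t)$-path $P_s$ in $D$; and since an edge of $\overleftarrow{D}$ leaves $X$ precisely when the corresponding edge of $D$ enters $X$, the avoidance condition translates into $A(P_s)\cap\Delta^-_D(X)=\emptyset$ for every $X\in\mathcal{F}_{\rm in}(D)$, which is exactly what Lemma~\ref{lem:105'} asserts. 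Polynomial-time computability is inherited from Lemma~\ref{lem:105}, since building $\overleftarrow{D}$ and reversing paths costs only linear time.

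I do not expect a genuine obstacle here; the only thing requiring care is bookkeeping the correspondence between ``leaves'' in $\overleftarrow{D}$ and ``enters'' in $D$, and noting that the auxiliary sets $X_{\rm out}(\cdot)$ appearing inside the proof of Lemma~\ref{lem:105}, when read for $\overleftarrow{D}$, are exactly the minimal sets in $\mathcal{F}_{\rm in}(D)$ containing the given vertex. If one prefers to avoid the reversal, the alternative is to copy the proof of Lemma~\ref{lem:105} verbatim with $\mathcal{F}_{\rm out}$ replaced by $\mathcal{F}_{\rm in}$, $\Delta^+$ by $\Delta^-$, $\delta^+$ by $\delta^-$, $X_{\rm out}(s)$ by the (unique) inclusionwise minimal set $X_{\rm in}(t)$ with $t\in X_{\rm in}(t)\in\mathcal{F}_{\rm in}$, and ``path from $s$'' by ``path to $t$''; Lemmas~\ref{lem:102}, \ref{lem:103}, and \ref{lem:104} all have the evident symmetric counterparts, invoked in precisely the same places.
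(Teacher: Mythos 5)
Your proposal is correct and matches the paper's approach: the paper itself obtains Lemma~\ref{lem:105'} from Lemma~\ref{lem:105} purely by exchanging the roles of $\mathcal{F}_{\rm out}$ and $\mathcal{F}_{\rm in}$, and your reverse-digraph formulation (with the identities $\delta^+_{\overleftarrow{D}}=\delta^-_D$, $\delta^-_{\overleftarrow{D}}=\delta^+_D$, hence $\mathcal{F}_{\rm out}(\overleftarrow{D})=\mathcal{F}_{\rm in}(D)$) is just a clean formalization of that same symmetry. The verbatim-substitution alternative you mention is exactly what the paper intends, so there is nothing missing.
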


\subsubsection{Proof of Lemma~\ref{lem:defP}}
\label{sec:subsectiondefP}

We are now ready to prove Lemma~\ref{lem:defP}. 

Let $T^* \in \mathcal{F}^R_{\rm out}$ and $t^* \in T^*$. 
By applying Lemma~\ref{lem:105'} to $t^*$, 
we obtain an inclusionwise minimal vertex set $S$ in $\mathcal{F}_{\rm in}$.
By Lemma~\ref{lem:107}, $S$ has a safe source $s$.
Lemma~\ref{lem:105'} guarantees that there exists an $(s,t^*)$-path $P_s$ such that 
$A(P_s) \cap \Delta^-_D(X) = \emptyset$ for any $X \in \mathcal{F}_{\rm in}$.
This implies that $P_s$ is in $D[R]$, since $A(P_s) \cap \Delta^-_D(R) = \emptyset$ as $R \in \mathcal{F}_{\rm in}$.
In particular, $s \in R$. 
By the minimality of $S$, this shows that $S \subseteq R$ (possibly, $S=R$). 

Traverse along $P_s$ from $s$ to $t^*$ and let $t'$ be the first vertex on $P_s$ such that 
there exists a vertex set $T' \in \mathcal{F}^R_{\rm out}$ with $t' \in T'$. 
Note that such $t'$ always exists, because $t'=t^*$ satisfies the condition. 
We also note that, for each $x \in V$, we can check the existence of a vertex set $X \in \mathcal{F}^R_{\rm out}$ with $x \in X$ by a minimum cut algorithm. 
Let $Q_1$ denote the subpath of $P_s$ between $s$ and $t'$, i.e., $Q_1 := P_s[s, t']$. 
Then we see by the choice of $t'$ that $V(Q_1)-t'$ is disjoint from $X$ for every $X\in  \mathcal{F}^R_{\rm out}$.

By applying Lemma~\ref{lem:105} to $t'$, 
we obtain an inclusionwise minimal vertex set $T$ in $\mathcal{F}_{\rm out}$. 
Let $t$ be a safe sink in $T$ as in Lemma~\ref{lem:107}, and  
let $Q_2$ be a $(t', t)$-path such that 
$A(Q_2) \cap \Delta^+_D(X) = \emptyset$ for any $X \in \mathcal{F}_{\rm out}$, 
whose existence is guaranteed by Lemma~\ref{lem:105}. 
We note that $V(Q_2) \subseteq T' \subsetneq R$, 
since $A(Q_2) \cap \Delta^+_D(T') = \emptyset$ as $T' \in \mathcal{F}_{\rm out}$.
In particular, $t \in T' \subsetneq R$. 
By the minimality of $T$, this shows that $T \subseteq T' \subsetneq R$ (possibly, $T=T'$). 

Let $P$ be the $(s, t)$-path obtained by concatenating $Q_1$ and $Q_2$ (see Figure~\ref{fig:07}). 
Note that $P$ goes through each vertex at most once, since 
$V(Q_1) \cap T' = \{t'\}$ and $V(Q_2) \subseteq T'$. 
Then, $P$ satisfies the conditions in the lemma. 
By Lemmas~\ref{lem:107}, \ref{lem:105}, and \ref{lem:105'}, the above procedure can be executed in polynomial time, which completes the proof. 
\qed

\begin{figure}[t]
\centering
\includegraphics[width=7cm]{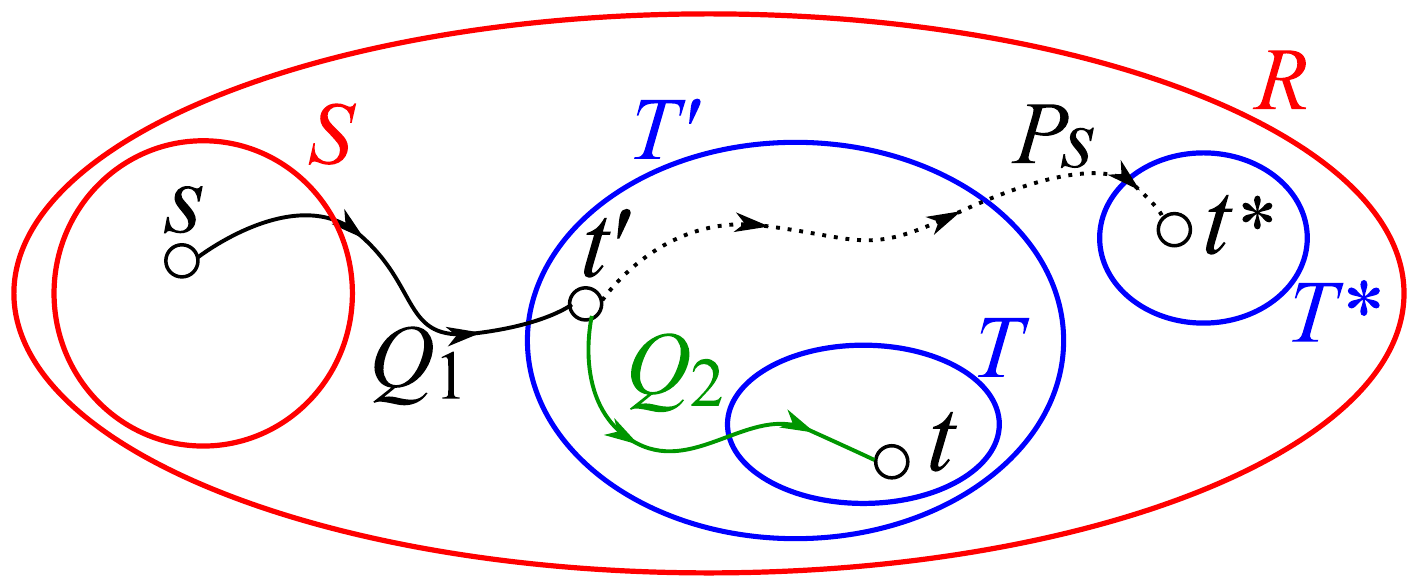}
\caption{Proof of Lemma~\ref{lem:defP}.}
\label{fig:07}
\end{figure}

\section{Proof of Theorem~\ref{thm:local}}
\label{sec:localproof}

In this section, we prove Theorem~\ref{thm:local}: 
for two strongly connected orientations $D_1, D_2$,  
there exists a path connecting $D_1$ and $D_2$ in the edge-flip graph $\GG_1(G)$ if and only if there exists no $2$-edge-cut $\{\{u,v\}, \{u',v'\}\}$ such that
$(u,v), (v',u')$ are edges of $D_1$ and
$(v,u), (u',v')$ are edges of $D_2$.

If there exists a $2$-edge-cut $\{\{u,v\}, \{u',v'\}\}$ such that
$(u,v), (v',u')$ are edges of $D_1$ and
$(v,u), (u',v')$ are edges of $D_2$, then
we cannot flip $(u, v)$ or $(v', u')$ in $D_1$ one-by-one.
Hence $D_1$ and $D_2$ are not connected in the edge-flip graph $\GG_1(G)$.

Suppose that no such $2$-edge-cut exists.
Let $D_1 = (V, A_1)$ and $D_2=(V, A_2)$.
We will show that there exists an edge $e$ in $A_1 -  A_2$ such that we can flip $e$ in $D_1$ keeping strong edge-connectedness.

Let $e=(u, v)$ be an arbitrary edge in $D_1$ but not in $D_2$~(i.e., $(v, u)$ is in $D_2$).
We may assume that we cannot flip $e$.
Then there exists a vertex subset $X$ such that $\Delta^+_{D_1} (X)= \{e\}$.
Since $D_1$ is strongly connected, $\delta^-_{D_1}(X)\geq 1$.
We take such a vertex subset $X$ so that $\delta^-_{D_1}(X)$ is maximized.

Since $D_2$ has an edge $(v, u)$ and $\delta^+_{D_2}(X)\geq 1$, there exists an edge $f$
in $\Delta^-_{D_1}(X)$ such that 
the reverse of $f$ is in $A_2$.
We will show that the edge $f$ can be flipped keeping strong connectivity.

Assume to the contrary that $f$ cannot be flipped.
Then there exists a vertex subset $Y$ such that $\Delta^+_{D_1} (Y) = \{f\}$ and $\delta^-_{D_1}(Y)\geq 1$.
Moreover, we see that $\delta^-_{D_1}(Y)\geq 2$.
In fact, if $\delta^-_{D_1}(Y) = 1$, then $E_G(Y)$ is a $2$-edge cut.
Since $D_2$ is strongly connected, the (unique) edge in $\Delta^-_{D_1}(Y)$ must be flipped in $D_2$.
This, however, contradicts the assumption of the theorem.

First consider the case when $X\cap Y = \emptyset$.
If $X\cup Y = V$, then $\Delta^+_{D_1} (X) = \{e\}$ and $\Delta^-_{D_1} (X)=\Delta^+_{D_1} (Y) = \{f\}$, which contradicts the assumption of the theorem.
Thus we have $X\cup Y \subsetneq V$.
Define $X' = X\cup Y$.
Then, since the edge $f$ only enters from $Y$ to $X$, it follows that $\delta^-_{D_1} (X') = \delta^-_{D_1} (X) -1 + \delta^-_{D_1}(Y)$.
Since $\delta^-_{D_1}(Y)\geq 2$, we have $\delta^-_{D_1} (X') > \delta^-_{D_1} (X)$, which contradicts the maximality of $\delta^-_{D_1}(X)$.

Thus we may suppose that $X\cap Y \neq \emptyset$.
We first claim that $X\cup Y = V$.
Indeed, if $X\cup Y \subsetneq V$, then we have
\[
2 = \delta^+_{D_1} (X) +\delta^+_{D_1} (Y)\geq \delta^+_{D_1} (X\cap Y) +\delta^+_{D_1} (X\cup Y) \geq 2
\]
and hence $\delta^+_{D_1} (X\cap Y) = \delta^+_{D_1} (X\cup Y) = 1$.
However, we see that $\Delta^+_{D_1}(X\cap Y) \cup \Delta^+_{D_1}(X\cup Y)= \{e\}$
and $e \notin \Delta^+_{D_1}(X\cap Y) \cap \Delta^+_{D_1}(X\cup Y)$, which is a contradiction.

Define $X' = X\cap Y$.
Since the edge $f$ only enters from $Y -  X$ to $X -  Y$, it follows that $\delta^-_{D_1} (X') = \delta^-_{D_1} (X) -1 + \delta^-_{D_1}(Y)$.
Since $\delta^-_{D_1}(Y)\geq 2$, we have $\delta^-_{D_1} (X') > \delta^-_{D_1} (X)$, which contradicts the maximality of $\delta^-_{D_1}(X)$.
Therefore, the edge $f$ can be flipped.

From the above discussion, we can find in polynomial time an edge $e$ in $A_1 -  A_2$ such that flipping the edge $e$ in $D_1$ does not violate strong edge-connectedness.
By repeatedly finding such edges, we obtain a sequence of orientations from $D_1$ to $D_2$ by edge flips. 
In each edge flip, $|A_1 - A_2|$ decreases by one.
Since the length of a sequence is at least $|A_1 - A_2|$, the obtained sequence turns out to be the shortest.
This completes the proof of Theorem~\ref{thm:local}.
\qed

\section{Concluding Remarks}
\label{sec:conclusion}

This paper initiates the study of $k$-edge-connected orientations through edge flips for $k \geq 2$.
As a showcase, we give a new edge-flip-based proof (Theorem \ref{thm:main01}) of Nash-Williams' theorem \cite{nash-williams_1960}: an undirected graph $G$ has a $k$-edge-connected orientation if and only if $G$ is $2k$-edge-connected.
Our new proof has another useful property that all the intermediate orientations have non-decreasing edge-connectivity in the process.
Using Theorem \ref{thm:main01}, we prove that the edge-flip graph of $k$-edge-connected orientations of an undirected graph $G$ is connected if $G$ is $(2k+2)$-edge-connected (Theorem \ref{thm:main}).

Several questions remain open.
In Theorem \ref{thm:main01}, we showed that the length of an edge-flip sequence is bounded by $k|V|^3$.
However, we do not know this bound is tight.
It is not clear how to find such a shortest sequence in polynomial time.
We do not know the tightness of Theorem \ref{thm:main}, either:
we do not know whether the edge-flip graph of $k$-edge-connected orientations is connected when the underlying undirected graph is $(2k+1)$-edge-connected.
We do not know the $k$-edge-connectedness counterpart of Theorem \ref{thm:local} when $k\geq 2$.

It is not clear how to find a shortest path in the edge-flip graph of $k$-edge-connected orientations in polynomial time when $k\geq 2$.
When $k=1$, we can find a shortest path by looking at the ``symmetric difference'' of two given strongly connected orientations~\cite{GZ,FPS}.
However, when $k=2$, there exists an example for which the symmetric difference does not determine a shortest path.

\section*{Acknowledgments}

This work was supported by 
JSPS KAKENHI Grant Numbers JP20H05793, JP20H05795, JP20K11670, JP20K11692, JP19K11814, JP18H04091, JP18H05291, and JP21H03397, Japan.

\bibliographystyle{acm}
\bibliography{orient}

\end{document}